\newtheorem{theorem}{Theorem}
\newtheorem{lemma}[theorem]{Lemma}
\newtheorem{proposition}[theorem]{Proposition}
\newtheorem{prop}[theorem]{Proposition}
\theoremstyle{definition}
\newtheorem{definition}[theorem]{Definition}
\newtheorem{example}[theorem]{Example}
\newtheorem{corollary}[theorem]{Corollary}
\theoremstyle{remark}
\newtheorem{remark}[theorem]{Remark}
\newcommand{\secat}{{\sf {secat}}}
\newcommand{\tc}{{\sf {TC}}}
\newcommand{\TC}{{\sf {TC}}}
\newcommand{\e}{{\mathfrak {e}}}
\newcommand{\h}{{\mathfrak {h}}}
\newcommand{\Z}{{\mathbb {Z}}}
\newcommand{\Id}{{\sf {Id}}}
\newcommand{\cat}{{\sf {cat}}}
\newcommand{\rk}{{\sf {rk}}}
\renewcommand{\u}{{\mathfrak u}}
\renewcommand{\v}{{\mathfrak v}}
\newcommand{\U}{{\mathfrak U}}
\newcommand{\RP}{{\mathbf {RP}}}
\newcommand{\CP}{{\mathbf {CP}}}
\newcommand{\rr}{{\mathbb {R}}}
\title{Sequential parametrized topological complexity of sphere bundles}
\author{Michael Farber}
	\address{School of Mathematical Sciences\\
Queen Mary University of London\\ E1 4NS London\\UK.}
	\email{m.farber@qmul.ac.uk}
	\address{School of Mathematical Sciences\\
Queen Mary University of London\\ E1 4NS London\\UK.}
	\email{amit.paul@qmul.ac.uk/amitkrpaul23@gmail.com}
	\subjclass{55M30}
	\keywords{Topological complexity, Parametrized topological complexity, 
	Sequential parametrized topological complexity, Sphere bundle}
	\thanks{Both authors were partially supported by an EPSRC research grant}
\author{Amit Kumar Paul}
\begin{document}
\maketitle

\begin{abstract}
Autonomous motion of a system (robot) is controlled by a motion planning algorithm. 
A sequential parametrized motion planning algorithm \cite{FP22} works under variable external conditions and generates continuous motions of the system to attain the prescribed sequence of states at prescribed moments of time. Topological complexity of such algorithms characterises their structure and discontinuities. 
Information about states of the system consistent with states of the external conditions is described by a fibration $p: E\to B$ where the base $B$ parametrises the external conditions and each fibre $p^{-1}(b)$ is the configuration space of the system constrained by external conditions $b\in B$; more detail on this approach is given below. 
Our main goal in this paper is to study the sequential topological complexity of sphere bundles 
$\dot \xi: \dot E\to B$; in other words we study {\it \lq\lq parametrized families of spheres\rq\rq} and sequential parametrized motion planning algorithms for such bundles. We use the Euler and Stiefel - Whitney characteristic classes to obtain lower bounds on the topological complexity. We illustrate our results by many explicit examples. Some related results for the special case $r=2$ were described earlier in \cite{FW23}.

\end{abstract}
\section{Introduction}

The motion planning problem of robotics  \cite{Lat}, \cite{LaV} leads naturally to 
the notion of topological complexity $\TC(X)$ initiated in \cite{Far03}; $\TC(X)$ is a numerical invariant which characterises behaviour of motion planning algorithms in a variety of different ways, in particular, $\tc(X)$ is the minimal degree of instability of motion planning algorithms for systems having the topological space $X$ as their configuration space, see Theorem 14 in \cite{Far04}. Important further results and generalisations could be found in \cite{BasGRT14}, \cite{Dra}, \cite{Gar19}, \cite {GraLV}. 

A new \emph{\lq\lq parametrized\rq\rq} approach to the motion planning problem was developed recently in \cite{CohFW21}, \cite{CohFW}. Para\-metrized motion planning algorithms can function 
in complex situations involving variable external conditions which are viewed as parameters and constitute part of the input of the algorithm.  
The authors of \cite{CohFW21}, \cite{CohFW} analysed in full detail parametrised topological complexity of the Fadell - Neuwirth bundle \cite{FadN62} which is equivalent to the motion planning problem for many robots and obstacles in the Euclidean space. Further progress was achieved in \cite{FarW} where explicit parametrized motion planning algorithms of minimal complexity were constructed. 

M. Grant \cite{Gra22} applied the notion of parametrized topological complexity to study group epimorphisms. 
In paper \cite{Gra22} he proved a number of interesting bounds and gave a new computation of the parametrised topological complexity of the Fadell-Neuwirth fibration in the planar case, originally computed in \cite{CohFW}. 

In a recent paper \cite{FW23} the authors studied parametrized topological complexity of sphere bundles $\tc[\dot \xi: \dot E\to B]$ 
associated to vector bundles $\xi: E\to B$. It was shown how the properties of the Euler and Stiefel - Whitney characteristic classes 
can help to compute the invariant $\tc[\dot \xi: \dot E\to B]$. The explicitly computed in \cite{FW23} examples show that parametrized topological complexity $\tc[\dot \xi: \dot E\to B]$ can be arbitrarily large, unlike the usual topological complexity of spheres which equals 1 or 2, depending on parity of the dimension. In the other extreme, it was observed in \cite{FW23} that $\tc[\dot \xi: \dot E\to B]=1$ if the vector bundle $\xi$ admits complex structure. 

A generalisation of the concept of parametrised topological complexity was initiated and studied  
in \cite{FP22} where the notions of sequential parametrized motion planning and sequential parametrized topological complexity were developed. The \lq\lq sequential\rq\rq\ approach also allows variable external conditions but the algorithm generates the 
\lq\lq schedule\rq\rq\  for the motion of the system to visit a prescribed sequence of states in a certain order at prescribed moments of time. In the case of fixed external conditions the approach of \cite{FP22}
 reduces to the sequential approach initiated by Y. Rudyak \cite{Rud10}. The main result of \cite{FP22} is the analysis of the sequential parametrised topological complexity of the Fadell - Neuwirth bundle. The case of constant obstacles was analysed earlier by J. Gonzalez and M. Grant \cite{GonG15}. In \cite{FP23} the authors developed explicit sequential parametrized motion planning algorithms for the motion of a number of robots in the presence of an arbitrary number of obstacles
in $\rr^d$. The cases of dimension $d$ being even or odd are essentially distinct. 

In the present article we continue study of sequential parametrized topological complexity with major focus on sphere bundles. The paper consists of the Introduction and sections \S\S \ref{sec:seqpar} -- \ref{sec:9}.

In \S \ref{sec:seqpar} we recall the basic definitions and several general results including the dimensional upper bound and the cohomological lower bound. In \S \ref{sec:sharp} we give a sharp upper bound for the sequential parametrized topological complexity which will be used later in the paper. This result is in the spirit of Theorem 2 and 3 from \cite{FG09} and its proof follows similar lines. 

In \S \ref{sec:upper} we give upper bounds for sequential parametrized topological complexity of sphere bundles. The main result of 
\S \ref{sec:upper}
is Proposition \ref{prop:upper} which involves sectional category of the associated Stiefel bundle $\ddot \xi: \ddot E\to \dot E$. 
In the following \S \ref{sec:5} we consider several Corollaries. 
While for an arbitrary vector bundle 
$\xi$ one has 
$$\tc_r[\dot \xi: \dot E\to B]\ge r-1,$$ we show that for a vector bundle $\xi$ admitting a complex structure one has the equality
\begin{eqnarray}\label{char}
\tc_r[\dot \xi: \dot E\to B]=r-1\quad\mbox{ for every} \quad r=2, 3, \dots.
\end{eqnarray}
This result raises an interesting question of whether equation (\ref{char}) characterises real vector bundles possessing complex structures. 

In section \S \ref{sec:6} we consider the cohomology algebra of the total space of a sphere bundle. 
This problem was studied by W. Massey in \cite{Ma}. 
For our purposes we need only the special case when the bundle admits a continuous section. Our 
Theorem \ref{thm:sec} must be well-known although we do not know a specific reference. 

Theorem \ref{thm:lower} from section \S 7 can be considered to be the main technical result of the paper. It gives an explicit expression for the cup-length of the kernel of the diagonal map which appears in Proposition \ref{lem:lowerbound} and Theorem \ref{thm:sharp} culminating in Corollary \ref{cor:lbound}. In \S \ref{sec:8} we consider several special cases when the upper and lower bounds match and the final answer can be given. We may mention Proposition \ref{thm:31} which allows to simplify the lower bounds in the case when the original bundle $\xi$ has a nonzero section. 

In the final section \S \ref{sec:9} we present an estimate using the Stiefel - Whitney characteristic classes; we also compute a few examples.

\section{Sequential parametrized topological complexity}\label{sec:seqpar}

In this section we review the notion of sequential parametrized topological complexity introduced in \cite{FP22}, see also \cite{CohFW21}, \cite{CohFW}, \cite{FP23}. This notion is motivated by the motion planning problem of robotics. We consider an autonomous robot moving in varying external conditions which are parametrized by a topological space $B$. 
An important special case is when the external conditions are the positions of the obstacles; in this case the space $B$ is the appropriate configuration space. Mathematically, it is convenient to consider a general situation when $B$ is an arbitrary topological space. We formalise the problem by considering a Hurewicz fibration $p : E \to B$ where for a choice of external conditions $b\in B$ the fibre $X_b= p^{-1}(b)$ 
is the space of all configurations of the system consistent with external conditions $b$. The total space $E$ is the union of the fibres 
$E=\sqcup_{b\in B}X_b$ and its topology reflects natural connectivity of the situation. We refer the reader to \cite{FarW}, \cite{FP22}, \cite{FP23} where the motion planning problem of many autonomously moving objects in the presence of multiple obstacles was studied.

Let $p: E\to B$ be a Hurewicz fibration. As usual, the symbol $E^I$ stands for the space of all continuous paths $\alpha: I=[0,1]\to E$. We denote by $E^I_B\subset E^I$  
the space of all paths $\alpha: I\to E$ such that $p\circ \alpha: I\to B$ is constant. 
Such paths $\alpha: I\to E$ represent motions of the system under fixed external conditions. 
Fix $r\ge 2$ points 
$0\le t_1<t_2<\dots <t_r\le 1$ in $I$  (\lq\lq the time schedule\rq\rq)
and consider the evaluation map 
\begin{eqnarray}\label{Pir}
\Pi_r : E^I_B \to E^r_B, \quad \Pi_r(\alpha) = (\alpha(t_1), \alpha(t_2), \dots,  \alpha(t_r)).\end{eqnarray} 
where for an integer $r\ge 2$ we denote 
$$E^r_B= \{(e_1, \cdots, e_r)\in E^r; \, p(e_1)=\cdots = p(e_r)\}.$$ 

A section $s: E^r_B \to E^I$ of the fibration $\Pi_r$ can be interpreted as {\it a parametrized sequential motion planning algorithm}, i.e. it is 
a function which assigns to every sequence of points $(e_1, e_2, \dots, e_r)\in E^r_B$ a continuous path $\alpha: I\to E$ (\lq\lq the motion of the system\rq\rq) satisfying $\alpha(t_i)=e_i$ for every $i=1, 2, \dots, r$ and such that the path 
$p\circ \alpha: I \to B$ is constant. The latter condition means that the system moves under the constant external conditions 
(such as positions of the obstacles). 

$\Pi_r$ is a Hurewicz fibration, see \cite[Appendix]{CohFW}; the  fibre of $\Pi_r$ is $(\Omega X)^{r-1}$ where $X$ is the fibre of $p: E\to B$. 
Typically $\Pi_r$ does not admit continuous sections and hence the 
motion planning algorithms are generally discontinuous. 

The following definition gives a measure of the complexity of sequential parametrized motion planning algorithms.

\begin{definition}\label{def:main}
The {\it $r$-th sequential parametrized topological complexity} of the fibration $p : E \to B$, denoted $\TC_r[p : E \to B]$, is defined as the sectional category of the fibration $\Pi_r$, i.e. 
\begin{eqnarray}\label{tcsec}
\TC_r[p : E \to B]:=\secat(\Pi_r).
\end{eqnarray}
\end{definition}	

In more detail, $\TC_r[p : E \to B]$ is defined as the minimal integer $k$ such that there is a open cover 
$\{U_0, U_1, \dots, U_k\}$ of $E^r_B$ with the property that each open set $U_i$ 
admits a continuous section $s_i : U_i \to E^I_B$ of $\Pi_r$.
The following Lemma allows using arbitrary partitions instead of open covers: 

 \begin{lemma}[see Proposition 3.6 in \cite{ FP22}]\label{lemma para tc}
 Let $p: E \to B$ be a locally trivial fibration where $E$ and $B$ are metrisable separable ANRs. Then the $r$-th sequential parametrized topological complexity $\TC_r[p: E \to B]$ equals the smallest integer $k\ge 0$ such that the space $E_B^r$ admits a partition $$E_B^r=F_0 \sqcup F_1 \sqcup ... \sqcup F_k, \quad F_i\cap F_j= \emptyset \text{ \ for } i\neq j,$$
and on each set $F_i$ there exists a continuous section $s_i : F_i \to E_B^I$ of the fibration $\Pi_r$.
 \end{lemma}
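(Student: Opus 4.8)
The plan is to prove the stated equality as two inequalities: an easy one passing from a sectional cover to a partition, and the substantive one passing from a partition back to an open cover.

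For the first inequality, set $k=\TC_r[p:E\to B]=\secat(\Pi_r)$ and fix an open cover $\{U_0,\dots,U_k\}$ of $E^r_B$ with continuous sections $\sigma_i\colon U_i\to E^I_B$ of $\Pi_r$. Put $F_0:=U_0$ and $F_i:=U_i\setminus(U_0\cup\dots\cup U_{i-1})$ for $1\le i\le k$. These sets are pairwise disjoint, their union is $E^r_B$, and each restriction $\sigma_i|_{F_i}$ is a continuous section of $\Pi_r$ over $F_i$; hence a partition of the required type into $k+1$ pieces exists, so the smallest admissible integer is $\le k$. (The pieces produced here are even locally closed, but that is not needed.)

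For the reverse inequality, suppose $E^r_B=F_0\sqcup\dots\sqcup F_k$ with continuous sections $s_i\colon F_i\to E^I_B$ of $\Pi_r$; I must produce an open cover of $E^r_B$ by $k+1$ sets carrying sections of $\Pi_r$, which gives $\secat(\Pi_r)\le k$. I would reduce this to a local extension lemma: \emph{if $q\colon\mathcal E\to\mathcal X$ is a Hurewicz fibration with $\mathcal X$ a metrisable ANR and $\mathcal E$ an ANR, and $A\subseteq\mathcal X$ is any subset over which $q$ admits a continuous section, then $q$ admits a continuous section over an open neighbourhood of $A$ in $\mathcal X$.} Two preliminaries are needed in order to apply this with $q=\Pi_r$: that $\Pi_r$ is a Hurewicz fibration, which is recalled in \S\ref{sec:seqpar}, and that $E^I_B$ and $E^r_B$ are metrisable separable ANRs. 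The latter follows since local triviality of $p$ forces the fibre $X$ to be a metrisable separable ANR (a retract of a trivialising chart), whence the path space $X^I$ and the power $X^r$ are metrisable separable ANRs by Milnor's results on function spaces of ANRs, and finally $E^I_B\to B$ and $E^r_B\to B$ are locally trivial with these ANR fibres over the ANR base $B$, so their total spaces are ANRs. Granting the lemma, one applies it to each $F_i$ to get an open $U_i\supseteq F_i$ carrying a section; since $\bigcup_i U_i\supseteq\bigcup_i F_i=E^r_B$, the sets $U_i$ form the desired open cover. (Disjointness of the $F_i$ plays no role in this direction; it is merely a convenient normalisation coming from the first direction.)

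To prove the local extension lemma I would first use that $\mathcal E$ is an ANR to extend $s\colon A\to\mathcal E$ to a continuous map $\tilde s$ on a neighbourhood of $A$ in $\mathcal X$; for an arbitrary, not necessarily closed, subset $A$ this step rests on Kuratowski's theorem on extension of maps into ANRs over a $G_\delta$, after which one restricts along the ambient open sets to obtain an honest open neighbourhood $V\supseteq A$. The composite $f:=q\circ\tilde s\colon V\to\mathcal X$ agrees with the inclusion on $A$; since a metrisable ANR is locally equiconnected, shrinking $V$ gives an open $W$ with $A\subseteq W\subseteq V$ together with a homotopy $H\colon W\times I\to\mathcal X$ from $f|_W$ to the inclusion $\iota_W$. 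Lifting $H$ through $q$ with initial lift $\tilde s|_W$, by the homotopy lifting property, produces $\tilde H\colon W\times I\to\mathcal E$ with $q\circ\tilde H_t=H_t$; then $\tilde H_1\colon W\to\mathcal E$ is a section of $q$ over the open set $W\supseteq A$, which proves the lemma. I expect the main obstacle to lie not in this homotopy-theoretic correction, which is routine, but in the ANR bookkeeping and, above all, in extending a section defined on a completely arbitrary subset to a genuinely open neighbourhood; it is precisely here that the hypothesis that $E$ and $B$ are metrisable separable ANRs enters in an essential way.
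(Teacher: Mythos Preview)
Your reduction is exactly the one used in the paper (via the cited \cite{FP22}): verify that $E^I_B$ and $E^r_B$ are metrizable ANRs and then invoke the equality $\secat=\secat_g$ for Hurewicz fibrations between ANRs, due to Garc\'{\i}a-Calcines \cite{Gar19}. Your ANR bookkeeping is correct and matches the paper's argument (fibre $X$ is an ANR, hence so are $X^r$ and $X^I$, and a locally trivial bundle with ANR base and ANR fibre has ANR total space). The easy direction is fine.

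Where your proposal diverges is that you try to \emph{prove} the Garc\'{\i}a-Calcines step rather than cite it, and here there is a real gap. Your local extension lemma is the right target and the ``extend, then correct via the homotopy lifting property'' template is sound, but the extension step for an \emph{arbitrary} (non-closed) subset $A$ is not handled by what you wrote. Lavrentiev/Kuratowski only gives an extension of $s$ to a $G_\delta$ set $G\supseteq A$ (and even that needs the target to be completely metrizable, which is not automatic for metrizable ANRs). A $G_\delta$ is an \emph{intersection} of open sets, so there is no ``restriction along the ambient open sets'' that produces an open neighbourhood on which the extended map is defined; the map lives only on $G$, and $G$ is typically not closed in any of the defining $U_n$, so you cannot bootstrap via the ANE property either. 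In short, the ANE property of $\mathcal E$ gives extensions only from \emph{closed} subsets, and the passage from arbitrary $A$ to an open $U\supseteq A$ is precisely the nontrivial content of \cite{Gar19}; your sketch does not supply it.

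So either cite \cite{Gar19} at this point, as the paper does, or replace the $G_\delta$ argument by the actual mechanism Garc\'{\i}a-Calcines uses. The rest of your write-up is fine.
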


\subsection*{Alternative descriptions of sequential parametrized topological complexity} In certain situations it is convenient  to generalise the above definition of $\TC_r[p: E \to B]$ as follows. 
Let $K$ be a path-connected finite CW-complex and let $k_1, k_2, \cdots, k_r\in K$ be a collection of $r$ pairwise 
distinct points of $K$, where $r\ge 2$. For a Hurewicz fibration $p: E \to B$, consider the space $E^{K}_B$ of all continuous maps 
$\alpha: K \to E$ such that the composition $p\circ \alpha: K\to B$ is a constant map. 
We equip $E^K_B$ with the compact-open topology induced from the function space $E^K$. 
Consider the evaluation map
$$\Pi_r^K : E^{K}_B \to E^r_B, \quad \Pi^K_r(\alpha) = (\alpha(k_1), \alpha(k_2), \cdots, \alpha(k_r)) \quad \mbox{for}
\quad\alpha\in E^K_B.$$ 
$\Pi^K_r$ is a Hurewicz fibration by the result of the Appendix to \cite{CohFW}.

\begin{lemma}\label{lemma para tc by secat} {\rm (See \cite{FP22}, Lemma 3.5)}  For any path-connected finite CW-complex $K$
and a set of pairwise distinct points $k_1, \dots, k_r\in K$ one has $$\secat(\Pi^K_r) = \TC_r[p:E\to B].$$
 \end{lemma}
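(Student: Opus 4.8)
The plan is to show that $\secat(\Pi^K_r)$ is independent of the choice of path-connected finite CW-complex $K$ and the distinct points $k_1,\dots,k_r\in K$, and in particular coincides with the value obtained for $K=I$ with the chosen $t_1<\dots<t_r$, which is $\TC_r[p:E\to B]$ by Definition \ref{def:main}. The standard strategy for such ``choice does not matter'' statements is to compare $\Pi^K_r$ with $\Pi_r$ by means of a fibrewise homotopy equivalence over $E^r_B$, together with the fact that sectional category is invariant under such equivalences. Concretely, I would first reduce to the interval: since $K$ is a path-connected finite CW-complex, choose an embedded tree (or simply an embedded arc) $T\subset K$ containing $k_1,\dots,k_r$ and a point $k_0$, and note that $T$ is a deformation retract of its regular neighbourhood; more usefully, fix paths in $K$ joining a basepoint to each $k_i$. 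The key reduction is that $K$ is homotopy equivalent rel the finite set $\{k_1,\dots,k_r\}$ to a wedge-like model, but for sectional category purposes one only needs a comparison map of fibrations.

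The main steps are as follows. First I would construct, for any two admissible pairs $(K,(k_i))$ and $(K',(k'_i))$, a map $\phi\colon K'\to K$ that is a homotopy equivalence and sends $k'_i\mapsto k_i$; such a $\phi$ exists because any two path-connected finite CW-complexes with $r$ marked points are related through a zig-zag of such maps (e.g. both admit a map to/from an interval with $r$ marked points, using that $K$ collapses onto a spanning tree after adding edges, and an interval with $r$ points maps by a homotopy equivalence onto any tree with $r$ leaves-or-marked-vertices). Second, precomposition with $\phi$ gives a map $\phi^*\colon E^K_B\to E^{K'}_B$ over $E^r_B$, i.e. $\Pi^{K'}_r\circ\phi^*=\Pi^K_r$. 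Third, since $\phi$ is a homotopy equivalence of CW-complexes, $\phi^*$ is a homotopy equivalence of the mapping spaces, and because the marked points are preserved it is a fibrewise map over $E^r_B$ between the Hurewicz fibrations $\Pi^K_r$ and $\Pi^{K'}_r$; a fibrewise map over a common base which is a homotopy equivalence of total spaces is a fibre homotopy equivalence (this is the Dold theorem for fibrations). Fourth, invoke the elementary fact that $\secat$ is an invariant of fibre homotopy type: if two fibrations over the same base are fibre homotopy equivalent, their sectional categories agree. Applying this with $K'=I$ and the points $t_1<\dots<t_r$ yields $\secat(\Pi^K_r)=\secat(\Pi_r)=\TC_r[p:E\to B]$.

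The step I expect to be the main obstacle is the careful construction of the comparison map $\phi\colon I\to K$ (or the zig-zag) that is genuinely a homotopy equivalence while hitting the prescribed marked points $k_1,\dots,k_r$ in the prescribed order-independent fashion; one must make sure that the points $k_i$ being pairwise distinct is exactly what is needed, and that no orientation or ordering subtlety enters (the interval case has an order $t_1<\dots<t_r$, whereas a general $K$ has no order, so the map $\phi$ should be allowed to permute). A clean way around this is to observe that the forgetful/restriction map $E^K_B\to E^{\{k_1,\dots,k_r\}}_B=E^r_B$ factors, up to fibre homotopy, through the subspace of maps from any tree containing the $k_i$, and that all such trees are contractible, so $E^K_B$ is fibre homotopy equivalent over $E^r_B$ to the space of maps from the ``star'' on $r$ marked points; this normal form depends only on $r$. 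I would also remark that this is precisely the argument structure of the analogous non-parametrized statement in \cite{FP22} and \cite{Rud10}, so the proof can be kept brief by citing those and noting that the parametrization by $B$ (i.e. the constancy condition $p\circ\alpha=\mathrm{const}$) is preserved verbatim under precomposition with $\phi$ and hence causes no extra difficulty.
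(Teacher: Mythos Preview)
Your approach has a genuine gap. You propose to compare $\Pi^K_r$ and $\Pi^I_r$ by means of a fibre homotopy equivalence, obtained from a homotopy equivalence $\phi\colon I\to K$ (or a zig-zag of such maps) respecting the marked points. But $K$ is an \emph{arbitrary} path-connected finite CW-complex: it need not be homotopy equivalent to $I$, and in general it is not. Your alternative suggestion, that $E^K_B$ is fibre homotopy equivalent over $E^r_B$ to the space of maps from a tree or star, is likewise false: the fibre of $\Pi^K_r$ over a point $(e_1,\dots,e_r)\in E^r_B$ with $p(e_i)=b$ is the space of maps $K\to X_b$ with prescribed values at the $k_i$, and this depends on the full homotopy type of $K$ (for instance it sees $\pi_n(X_b)$ for $n>1$ when $K$ has higher cells), whereas for a tree the fibre is just $(\Omega X_b)^{r-1}$. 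So no fibre homotopy equivalence is available, and Dold's theorem cannot be invoked.

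The point you are missing is that one does not need a fibre homotopy equivalence to compare sectional categories: a mere fibrewise map suffices for an inequality. If $F\colon E\to E'$ is a map over a common base $B$ (i.e.\ $p'\circ F=p$), then any local section of $p$ composed with $F$ gives a local section of $p'$, so $\secat(p')\le\secat(p)$. The paper exploits this directly. Since $K$ is path-connected, choose a path $\gamma\colon I\to K$ with $\gamma(t_i)=k_i$; precomposition $\alpha\mapsto\alpha\circ\gamma$ defines a map $E^K_B\to E^I_B$ over $E^r_B$, giving $\secat(\Pi^I_r)\le\secat(\Pi^K_r)$. For the reverse direction, use Tietze extension (here $K$ is metrisable) to build a continuous map $f\colon K\to I$ with $f(k_i)=t_i$; precomposition $\beta\mapsto\beta\circ f$ defines a map $E^I_B\to E^K_B$ over $E^r_B$, giving $\secat(\Pi^K_r)\le\secat(\Pi^I_r)$. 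Neither $\gamma$ nor $f$ is a homotopy equivalence, and they need not be; the two one-sided inequalities together give the result.
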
		
\begin{proof} Let $0\le t_1<t_2<\dots<t_r\le 1$ be a given time schedule used in the definition of the map $\Pi_r=\Pi_r^I$ given by (\ref{Pir}). 
Since $K$ is path-connected we may find a continuous map $\gamma: I\to K$ with $\gamma(t_i) =k_i$ for all $i=1, 2, \dots, r$. We obtain a continuous map $F_\gamma: E^K_B \to E^I_B$ acting by the formula $F_\gamma(\alpha) = \alpha \circ \gamma$. It is easy to see that the following diagram commutes
$$
 \xymatrix{
E_B^{K} \ar[rr]^{F}  \ar[dr]_{\Pi_{r}^K}& &E_B^{I} \ar[dl]^{\Pi_{r}^I} \\ & E_B^r
}$$
Thus, any partial section $s: U\to E^K_r$ of $\Pi_r^K$ defines a partial section $F\circ s$ of $\Pi_r^I$ implying
$$\TC_r[p:E\to B]= \secat(\Pi^I_r)\le \secat(\Pi_r^K).$$
To obtain the inverse inequality note that any locally finite CW-complex is metrisable. Applying Tietze extension 
theorem we can find continuous functions $\psi_1, \dots, \psi_r: K\to [0,1]$ such that $\psi_i(k_j)=\delta_{ij}$, 
i.e. $\psi_i(k_j)$ equals 1 for $j=i$ and it equals $0$ for $j\not=i$. The function $f=\min\{1, \sum_{i=1}^r t_i\cdot \psi_i\}: K\to [0,1]$
has the property that $f(k_i)=t_i$ for every $i=1, 2, \dots, r$. We obtain a continuous map $F': E^I_B \to E^K_B$, where $F'(\beta) = \beta\circ f$, \, $\beta\in E^I_B$, which appears in the commutative diagram
$$
 \xymatrix{
E_B^{I} \ar[rr]^{F'}  \ar[dr]_{\Pi_{r}^I}& &E_B^{K} \ar[dl]^{\Pi_{r}^K} \\ & E_B^r
}$$
As above, this implies the opposite inequality 
$\secat(\Pi_r^K) \le \secat(\Pi^I_r)$
and completes the proof. 
\end{proof}
\begin{example}
As an example consider the case when $p: E \to B$ is a principal bundle with a connected topological group $G$ as fibre. Then $$\TC_{r}[p: E \to B] = \cat(G^{r-1})=\TC_r(G),$$
see \cite{FP22}, Proposition 3.3. In particular, for the Hopf bundle $p: S^3\to S^2$ one has $$\tc_r[p: S^3\to S^2]=\cat((S^1)^{r-1}) =r-1.$$
\end{example}

The following result (see Proposition 6.3 from \cite{FP22}) provides a lower bound for $\TC_{r}[p: E \to B]$:

\begin{prop}\label{lemma lower bound for para tc}\label{lem:lowerbound}
For a fibration $p: E \to B$, consider the diagonal map $\Delta : E \to E^r_B$ where $\Delta(e)= (e, e, \cdots, e)$, and the induced by $\Delta$ homomorphism in cohomology $$\Delta^\ast: H^\ast(E^r_B;R) \to H^\ast(E;R)$$ with coefficients in a ring $R$. 
If there exist cohomology classes $$u_1, \cdots, u_k \in \ker[\Delta^* : H^*(E_B^r; R) \to H^*(E; R)]$$ 
such that 
$$u_1 \cup \cdots \cup u_k \neq 0 \in H^*(E_B^r; R),$$ then $\TC_{r}[p: E \to B]\geq k$. 
\end{prop}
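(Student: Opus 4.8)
The plan is to reduce the statement to the standard cohomological lower bound for sectional category, applied to the Hurewicz fibration $\Pi_r : E^I_B \to E^r_B$. Recall that for any fibration $q : \mathcal E \to \mathcal B$ one has $\secat(q) \ge k$ whenever there exist classes $u_1, \dots, u_k \in H^*(\mathcal B; R)$ that all pull back to zero under a section of $q$ — equivalently, that lie in the kernel of $q$ composed with a homotopy section — and whose product is nonzero. More precisely, the relevant fact is: if $j : \mathcal B \to \mathcal B$ is (up to homotopy) a map admitting a lift through $q$, then classes in $\ker(j^*)$ with nonvanishing product force $\secat(q) \ge k$. So the first step is to identify the correct map playing the role of $j$.

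The key observation is that the diagonal $\Delta : E \to E^r_B$ is, up to homotopy, a section of $\Pi_r$ in the appropriate sense. Indeed, given $e \in E$, the constant path $\alpha_e : I \to E$ with $\alpha_e(t) = e$ for all $t$ lies in $E^I_B$ (its projection to $B$ is constant), and $\Pi_r(\alpha_e) = (e, e, \dots, e) = \Delta(e)$. Thus $\Delta$ factors as $\Pi_r \circ c$, where $c : E \to E^I_B$ sends $e$ to the constant path $\alpha_e$; this $c$ is continuous. Consequently, if $\{U_0, \dots, U_m\}$ is an open cover of $E^r_B$ with local sections $s_i : U_i \to E^I_B$ of $\Pi_r$, then pulling back along $\Delta$ gives an open cover $\{\Delta^{-1}(U_i)\}$ of $E$, and on each $\Delta^{-1}(U_i)$ the map $\Delta$ lifts through $\Pi_r$ via $s_i \circ \Delta$ — but this lift is homotopic to the global lift $c$ restricted to $\Delta^{-1}(U_i)$, because $\Pi_r$ is a fibration and both are lifts of $\Delta$ over that set. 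Hence any class $u \in H^*(E^r_B; R)$ that is restricted-to-zero on $U_i$ and lies in $\ker \Delta^*$ can be used in the standard covering argument.

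Now I would run the classical argument of Schwarz for sectional category (the same one behind the cohomological lower bound for $\TC$ and $\cat$). Suppose $\TC_r[p:E\to B] = \secat(\Pi_r) \le k-1$, so $E^r_B$ admits an open cover $U_0, \dots, U_{k-1}$ with local sections of $\Pi_r$. For each $i$ and each $u_j$, the hypothesis $u_j \in \ker \Delta^*$ together with the observation above implies that $u_j|_{U_i}$ maps to zero in $H^*(U_i; R)$ after composing with the lift; more directly, one shows $u_j$ restricts to zero in $H^*(U_i; R)$ itself, because the existence of a section over $U_i$ makes the inclusion $U_i \hookrightarrow E^r_B$ factor (up to homotopy) through $E$ via $\Delta$. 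Therefore each $u_j$ comes from a relative class in $H^*(E^r_B, U_i; R)$. Taking the cup product of these $k$ relative classes over the cover of cardinality $k$ yields a class in $H^*(E^r_B, \bigcup U_i; R) = H^*(E^r_B, E^r_B; R) = 0$ mapping to $u_1 \cup \cdots \cup u_k$, forcing $u_1 \cup \cdots \cup u_k = 0$, a contradiction. Hence $\secat(\Pi_r) \ge k$.

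The main obstacle — really the only subtle point — is the interplay between $\Pi_r$ being defined on the path space $E^I_B$ rather than directly on $E$, and making precise that "$\Delta$ admits local lifts over the cover sets, all homotopic to the constant-path lift." This is handled cleanly by the homotopy lifting property of $\Pi_r$ (which holds by the Appendix to \cite{CohFW}, as recalled in the excerpt) together with the fact that the fibre $(\Omega X)^{r-1}$ is path-connected when $X$ is, so any two lifts of $\Delta$ over a set are fibrewise homotopic; alternatively one can invoke directly that $\secat$ of a fibration equals $\secat$ of any fibrewise-homotopy-equivalent fibration and replace $\Pi_r$ by the model where the argument is transparent. Everything else is the verbatim Schwarz cup-length argument, so I would keep that part brief and cite \cite{Far03} or a standard reference for sectional category.
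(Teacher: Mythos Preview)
The paper does not actually prove this proposition; it simply cites Proposition~6.3 of \cite{FP22}. So there is no in-paper argument to compare against, and your write-up is effectively supplying the standard proof.

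Your overall approach is correct and is the usual one: reduce to the Schwarz cup-length lower bound for $\secat(\Pi_r)$ by identifying $\ker\Delta^\ast$ with $\ker\Pi_r^\ast$. The cleanest way to phrase this is to note once that the constant-path map $c:E\to E^I_B$ is a homotopy equivalence (a homotopy inverse is evaluation at any point, and the deformation $H_s(\alpha)(t)=\alpha((1-s)t)$ stays inside $E^I_B$ since $p\circ\alpha$ is constant). Since $\Pi_r\circ c=\Delta$ and $c^\ast$ is an isomorphism, $\ker\Delta^\ast=\ker\Pi_r^\ast$, and the classical sectional-category lower bound gives $\secat(\Pi_r)\ge k$ immediately. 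This replaces the whole discussion of ``two lifts of $\Delta$ being fibrewise homotopic'' and the detour through $\Delta^{-1}(U_i)$.

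One genuine slip: your claim that the fibre $(\Omega X)^{r-1}$ is path-connected whenever $X$ is, is false; $\Omega X$ is path-connected only when $X$ is simply connected. Fortunately you do not need this: the homotopy equivalence $c$ already gives, for any section $s_i:U_i\to E^I_B$, a homotopy $s_i\simeq c\circ(ev_0\circ s_i)$, hence the inclusion $U_i\hookrightarrow E^r_B$ factors up to homotopy as $\Delta\circ(ev_0\circ s_i)$, and every $u_j\in\ker\Delta^\ast$ restricts to zero on $U_i$. From there the relative-cup-product argument you sketch is verbatim Schwarz and is fine. I would drop the fibre-connectivity remark entirely and present the argument via the homotopy equivalence $c$.
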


The upper bound is given by Proposition 6.1 from \cite{FP22} which states:

\begin{prop}\label{prop upper bound}
	Let $p: E \to B$ be a locally trivial fibration with fiber $X$, where $E, B, X$ are CW-complexes. Assume that the fiber $X$ is $k$-connected, where $k\ge 0$. Then 
	\begin{eqnarray}\label{upper}
	\TC_{r}[p: E \to B]\leq \left\lceil
	\frac{r\cdot \dim X+\dim B -k}{k+1}\right\rceil.
	\end{eqnarray}
\end{prop}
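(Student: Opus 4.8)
The plan is to reduce the $r$-th sequential case to a fibrewise version of the ordinary (two-point) parametrized topological complexity estimate via the substitution of the contractible CW-complex $K$ for the interval, and then to apply a Whitehead-type dimension counting argument. First I would invoke Lemma~\ref{lemma para tc by secat}: since a finite path-connected CW-complex $K$ with $r$ distinct marked points $k_1,\dots,k_r$ may be chosen at will, it suffices to bound $\secat(\Pi^K_r)$ for a convenient $K$. A natural choice is a wedge or a tree (e.g.\ a star) with $r$ leaves $k_1,\dots,k_r$, so that $E^K_B$ deformation retracts onto $E$ along the natural "constant map" section and the fibre of $\Pi^K_r$ over the diagonal is a product of based loop spaces. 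More precisely, the fibre of $\Pi^K_r$ is $(\Omega X)^{r-1}$, and since $X$ is $k$-connected, $\Omega X$ is $(k-1)$-connected, so the fibre $(\Omega X)^{r-1}$ is $(k-1)$-connected as well.

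Next I would apply the general upper bound for sectional category in terms of connectivity of the fibre and dimension of the base. Recall the classical fact (due to Schwarz, in the refined form used in \cite{FG09}): if $q: \widetilde{Y}\to Y$ is a fibration with $(c-1)$-connected fibre over a CW-base $Y$, then $\secat(q)\le \lceil \dim Y/c\rceil$, provided one knows $\secat$ is finite; more precisely, for the relevant $r$-fold situation one uses the variant giving $\secat \le \lceil (\dim Y + \text{(something)})/c\rceil$ after controlling the relevant connectivity. Here $Y = E^r_B$ and $q = \Pi^K_r$. The base $E^r_B$ is the $r$-fold fibre product, so $\dim E^r_B \le \dim B + r\cdot\dim X$ (each of the $r$ factors contributes a fibre direction of dimension $\dim X$, while the base direction $\dim B$ is shared). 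With the fibre of $\Pi^K_r$ being $(k-1)$-connected (i.e.\ $c=k$ in the notation above... but actually we must be careful: the fibre is $(\Omega X)^{r-1}$ which is $(k-1)$-connected only when $k\ge 1$; for $k=0$ the fibre is merely path-connected). Feeding $\dim E^r_B \le \dim B + r\cdot\dim X$ and connectivity $k-1$... wait, this needs adjustment to land on the stated formula with denominator $k+1$ and the $-k$ correction in the numerator.

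The correct route, and the main technical point, is to not work over $E^r_B$ directly but to use the \emph{reduced} diagonal: one replaces $\Pi^K_r$ by the fibration obtained after collapsing the image of the diagonal section, exactly as in the proof of the dimensional upper bound for ordinary $\TC_r$. Concretely, $\secat(\Pi^K_r)$ equals the sectional category of the fibration $p': P \to E^r_B$ where $P = E^K_B \setminus (\text{image of the constant-map section})$ fibred over $E^r_B\setminus\Delta(E)$; one checks this difference space deformation retracts appropriately, the fibre becomes $k$-connected (we gain one degree of connectivity by removing the basepoint of each loop factor — this is the analogue of passing from $\Omega X$ to $P_0 X \simeq \ast$ in the unparametrised story, which raises connectivity by one), and the effective base dimension is reduced. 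The Schwarz bound $\secat \le \lceil \dim(\text{base})/(\text{conn}+1)\rceil$ then gives $\secat(\Pi^K_r) \le \lceil (r\dim X + \dim B - k)/(k+1)\rceil$ directly. The hard part will be setting up this fibrewise "relative" fibration carefully so that (i) its sectional category genuinely agrees with $\secat(\Pi^K_r)$, (ii) its fibre is provably $k$-connected, and (iii) the CW/ANR hypotheses needed for the Schwarz-type estimate are met; once that scaffolding is in place the dimension count is routine. (In fact, as the excerpt notes, this Proposition is quoted from \cite{FP22}, Proposition 6.1, so one may alternatively just cite it; but the proof sketch above is the one I would reconstruct.)
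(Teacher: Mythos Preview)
The paper does not prove this proposition; it simply quotes it from \cite{FP22}, Proposition 6.1, as you yourself note at the end. So there is no ``paper's own proof'' to compare against.

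That said, your reconstruction contains a genuine gap. Your first two observations are correct and are in fact all that is needed: the fibre of $\Pi_r$ is $(\Omega X)^{r-1}$, which is $(k-1)$-connected, and $\dim E^r_B = r\dim X + \dim B$. The error is in how you recall the Schwarz bound. The correct statement is: if a fibration $q:\widetilde Y\to Y$ has $s$-connected fibre $F$, then the $m$-fold fibrewise join has fibre $F^{\ast m}$ of connectivity $m(s+2)-2$, so by obstruction theory $\secat(q)\le m-1$ once $\dim Y \le m(s+2)-1$; this gives
\[
\secat(q)\;\le\;\left\lceil \frac{\dim Y+1}{s+2}\right\rceil-1\;=\;\left\lceil \frac{\dim Y-(s+1)}{s+2}\right\rceil.
\]
With $s=k-1$ and $\dim Y = r\dim X+\dim B$ this is exactly $\left\lceil (r\dim X+\dim B-k)/(k+1)\right\rceil$. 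No further manoeuvre is required.

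Your second half --- removing the diagonal, passing to a ``relative'' fibration, and claiming the fibre ``becomes $k$-connected (we gain one degree of connectivity by removing the basepoint of each loop factor)'' --- is both unnecessary and incorrect. Deleting a point from $\Omega X$ does not raise its connectivity, and the analogy with ``passing from $\Omega X$ to $P_0X$'' does not apply here: $P_0X$ is contractible because it is a path space, not because a point has been removed. The whole detour arose from misremembering the denominator in Schwarz's inequality as $s+1$ rather than $s+2$; once that is fixed, the direct application of Schwarz to $\Pi_r$ over $E^r_B$ already yields the stated bound.
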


\begin{example}
Let $\xi: E\to B$ be a rank $q\ge 2$ vector bundle over a CW-complex $B$. Consider the unit sphere bundle $\dot \xi: \dot E\to B$. 
Its fibre is the sphere $S^{q-1}$ and the upper bound (\ref{upper}) gives
\begin{eqnarray}\label{upper2}
\tc_r[\dot \xi: \dot E\to B] \le \left\lceil \frac{r(q-1)+\dim B -q+2}{q-1}\right\rceil = r-1 +\left\lceil \frac{\dim B+1}{q-1}\right\rceil.
\end{eqnarray}
\end{example}

In \S \ref{sec:sharp} below we give statements allowing sharper upper bounds in some cases.

We finish this section with a well-known statement which will be used later in this paper.
\begin{lemma}\label{part}
Let $p: E \to B$ be a fibration where the base $B$ is metrizable. Then there exists a partition 
$B=A_0 \sqcup A_1 \sqcup \dots \sqcup A_k$
with $k=\secat[p:E\to B]$
such that each set $A_i$ admits a continuous section of $p$ and, additionally, 
\begin{eqnarray}\label{closure}
\bar{A_i}\subset \bigcup_{j\geq i} \ A_j\end{eqnarray}
for every $i=0, 1, \dots, k$. 
\end{lemma}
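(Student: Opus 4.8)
The plan is to start from the definition $\secat[p:E\to B]=k$, which gives an open cover $B=U_0\cup U_1\cup\dots\cup U_k$ where each $U_i$ carries a continuous section $\sigma_i:U_i\to E$ of $p$. The goal is to replace this cover by a partition into sets that still admit sections and that additionally satisfy the nesting condition (\ref{closure}) on closures. The natural device is to shrink the cover: since $B$ is metrizable it is paracompact and normal, so there is an open cover $\{V_0,\dots,V_k\}$ with $\overline{V_i}\subset U_i$ for each $i$ (a standard shrinking lemma). I would then build the partition greedily from the top index downwards.

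Concretely, I would set $F_k=\overline{V_k}$ and, for $i<k$, define
$$F_i=\overline{V_i}\setminus\bigl(F_{i+1}\cup F_{i+2}\cup\dots\cup F_k\bigr),$$
or, written in one go, $F_i=\overline{V_i}\setminus\bigcup_{j>i}\overline{V_j}$. First I would check that these sets are pairwise disjoint by construction and that they cover $B$: every point lies in some $V_i$, hence in $\overline{V_i}$, hence in $F_i$ for the largest such index $i$. Each $F_i$ is contained in $\overline{V_i}\subset U_i$, so the restriction of $\sigma_i$ gives a continuous section of $p$ over $F_i$. It remains to verify the closure condition. Since $F_i\subset\overline{V_i}$ and $\overline{V_i}$ is closed, $\overline{F_i}\subset\overline{V_i}$. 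Now $\overline{V_i}=F_i\sqcup(\overline{V_i}\cap\bigcup_{j>i}\overline{V_j})$, and a point of the second set lies in some $\overline{V_j}$ with $j>i$, hence in $F_\ell$ for some $\ell\ge j>i$; therefore $\overline{V_i}\subset\bigcup_{\ell\ge i}F_\ell$ and in particular $\overline{F_i}\subset\bigcup_{\ell\ge i}F_\ell$, which is exactly (\ref{closure}).

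One should also confirm that a continuous section over an arbitrary subset $F_i$ (not necessarily open) is what is wanted here; since sectional category is being used in the partitioned form (as in Lemma \ref{lemma para tc}), a section defined and continuous on $F_i$ in the subspace topology suffices, and restriction of $\sigma_i$ provides it. The main point requiring care, and the only place where the hypothesis that $B$ is metrizable is genuinely used, is the shrinking step producing $\{V_i\}$ with $\overline{V_i}\subset U_i$; everything after that is elementary set manipulation. I would also note that the number of pieces is still $k+1$, so the partition realizes $\secat[p:E\to B]$ and does not merely bound it. If one wants the $F_i$ to be, say, closed or locally closed rather than arbitrary, a little more bookkeeping with the shrinking (e.g. iterating so that $\overline{V_i}\subset U_i$ for all $i$ simultaneously) handles it, but for the stated conclusion the construction above is enough.
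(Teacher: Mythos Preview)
Your argument is correct. The shrinking step (using that a metrizable space is normal) yields $\overline{V_i}\subset U_i$, the sets $F_i=\overline{V_i}\setminus\bigcup_{j>i}\overline{V_j}$ are pairwise disjoint and cover $B$ (taking for each point the largest index $i$ with $b\in\overline{V_i}$), each $F_i\subset U_i$ inherits the section $\sigma_i$, and $\overline{F_i}\subset\overline{V_i}\subset\bigcup_{\ell\ge i}F_\ell$ exactly as you argue.

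The paper takes a genuinely different route. It works directly with the original open cover $\{U_0,\dots,U_k\}$ (no shrinking) and stratifies $B$ by the \emph{multiplicity} function $\mu(b)=|\{i:b\in U_i\}|$, setting $A_i=\{b:\mu(b)=k+1-i\}$. Lower semicontinuity of $\mu$ gives the closure condition \eqref{closure}. The section over $A_i$ is then assembled piecewise: $A_i$ decomposes as a disjoint union of sets $U'_\alpha$ indexed by the $(k{+}1{-}i)$-element subsets $\alpha\subset\{0,\dots,k\}$, and on $U'_\alpha$ one uses $s_{\min\alpha}$; these pieces are relatively clopen in $A_i$, so the sections glue continuously. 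Your construction is more elementary in that the section on each $F_i$ is simply the restriction of a single $\sigma_i$, at the cost of invoking the shrinking lemma; the paper avoids shrinking but pays with a slightly more intricate combinatorial description of the sections.
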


\begin{proof}
For $k=\secat[p:E\to B]$, let $U_0 \cup U_1 \cup \dots \cup U_k=B$ be an open cover  
with continuous sections $s_i : U_i \to E$ of $p$ where $i=0, 1, \dots, k$. 

For $b\in B$ we denote by $\mu(b)$ the cardinality of the set 
$ \{i \ ; \ b\in U_i\},$ i.e. $\mu(b)\in \{1, 2, \dots, k+1\}$ is the number of sets $U_i$ containing the point $b$.  
If a sequence of points $b_n\in B$ with $\mu(b_n)=i$ converges to a point $b_0\in B$ then clearly
$\mu(b_0)\le i$. 

Denote $$A_i=\{b\in B \ ; \mu(b)=k+1-i\}, \quad 0\leq i \leq k.$$ Clearly the sets $\{A_i\}$ are pairwise disjoint and $B=A_0 \sqcup A_1 \sqcup \dots \sqcup A_k.$
To prove (\ref{closure}) assume that $b_0\in \bar{A_i}$. This means that $b_0=\lim b_n$ where $b_n \in A_i$, i.e. $\mu(b_n)=k+1-i$. By the remark above $\mu(b_0) \le k+1-i$, i.e. $b_0\in A_j$ where $j\ge i$. 

For a sequence $\alpha =(i_1<i_2< \dots <i_p)$, where $0\le i_1< i_2< \dots < i_p\le k$, we denote 
$U_\alpha= U_{i_1}\cap U_2\cap \dots\cap U_{i_p}$ and $|\alpha|=p$. For two such sequences we shall write 
$\alpha>\beta$ if $\beta$ is a subsequence of $\alpha$.  We shall also introduce the notation 
$$U'_\alpha = U_\alpha - \bigcup_{|\beta|>|\alpha|} U_\beta.$$
The set $A_i$ can be represented as the disjoint union
$$A_i =\bigcup_{|\alpha|= k+1-i} U'_\alpha.$$
Similar to the argument given above, one has
$\overline {U'_\alpha} \subset \cup_{\beta<\alpha}U'_\beta$
and therefore 
$\overline{U'_\alpha} \cap U'_{\alpha'}=\emptyset$ for every pair $\alpha\not=\alpha'$ satisfying 
$|\alpha|=|\alpha'|$. Thus, the section $s_{i_1}$ is defined and is continuous on $U'_\alpha$ where
$\alpha =(i_1<i_2< \dots <i_p)$. Moreover, the sections $s_0, s_1, \dots, s_k$  jointly define as explained above continuous sections over the sets $A_i$. 
\end{proof}
\section{Sharp upper bound for $\tc_r[p:E\to B]$.}\label{sec:sharp}

The results of this section (Theorem \ref{sharp} and Corollary \ref{sharp1}) sometimes allow to sharpen the upper bound given by Proposition \ref{prop upper bound}; this will be illustrated by examples in \S \ref{sec:8}.


\begin{theorem}\label{sharp}\label{thm:sharp}
Let $p: E\to B$ be a locally trivial bundle with fibre $X$ where the spaces $X, E, B$ are finite CW-complexes. Assume that $X$ is 
$k$-connected, where $k\ge 1$, and  the fraction
$$\frac{\dim E^r_B}{k+1} = \frac{r\cdot \dim X + \dim B}{k+1}=m$$ is an integer, where $r\ge 2$.  
Then: 

(A) One has $\tc_r[p:E\to B] \le m$
and the equality 
\begin{eqnarray}\label{eq}
\tc_r[p:E\to B] = m
\end{eqnarray} holds if and only if there exists 
a cohomology class
$\theta\in H^{k+1}(E^r_B;\mathcal A),$ 
where $\mathcal A$ is a local coefficient system over the space $E^r_B$, 
such that $\Delta^\ast \theta =0\in H^{k+1}(E; \mathcal A|_E)$ and the $m$-th power
$$
\theta^{m}=\theta\cup \theta \cup \dots\cup \theta \not=0 \in H^{(k+1)m}(E^r_B; \mathcal A^{\otimes m})
$$
is nonzero. Here $\Delta: E\to E^r_B$ denotes the diagonal inclusion. 

(B) If, additionally to the above assumptions, the base $B$ is simply connected and the homology group $H_{k+1}(X;\Z)$ is 
torsion free then  
the equality (\ref{eq}) holds if and only if there exist integral cohomology classes 
$v_1, v_2, \dots, v_{m}\in H^{k+1}(E^r_B;\Z)$ such that 
$\Delta^\ast v_i=0$ for  $i=1, 2, \dots, m$
and the cup-product
$$v_1v_2\dots v_{m} \not=0\in H^{(k+1)m}(E^r_B;\Z)$$
is nonzero.  
\end{theorem}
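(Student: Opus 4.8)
The plan is to deduce Theorem~\ref{thm:sharp} from a general sharp upper bound for sectional category of fibrations, applied to the fibration $\Pi_r : E^I_B \to E^r_B$ whose fibre is $(\Omega X)^{r-1}$. Recall that $\secat(\Pi_r) = \tc_r[p:E\to B]$ by Definition~\ref{def:main}, and that $\Pi_r$ sits over the diagonal $\Delta : E \to E^r_B$ in the sense that $\Pi_r$ admits a section over the image of $\Delta$ (the constant paths). The key homotopy-theoretic input is the following: if $q : \mathcal E \to Y$ is a fibration whose fibre $F$ is $s$-connected with $s \ge 1$, and $Y$ is a CW-complex of dimension $\dim Y \le (s+1)(m+1) - 1$, then $\secat(q) \le m$; this is the sectional-category analogue of the connectivity--dimension estimate and is exactly what underlies Proposition~\ref{prop upper bound}. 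In our situation $F = (\Omega X)^{r-1}$ is $(k-1)$-connected when $X$ is $k$-connected, so writing $s = k-1$ the bound reads $\secat(\Pi_r) \le m$ precisely when $\dim E^r_B = r\dim X + \dim B \le k(m+1) - 1$. Here a small discrepancy with the stated hypothesis $\frac{r\dim X + \dim B}{k+1} = m$ must be reconciled: one uses the sharper version of the dimension-connectivity bound for sectional category in the CW setting (as in Theorem~2 and~3 of \cite{FG09}, which is where the authors say the proof follows), giving $\secat(\Pi_r)\le m$ under the hypothesis as stated. So part~(A)'s inequality $\tc_r[p:E\to B]\le m$ is the routine half.

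The substance is the characterization of \emph{equality}. First I would invoke the general principle that for a fibration $q:\mathcal E\to Y$ whose fibre is $s$-connected and with $\dim Y = (s+1)(m+1)-1$ (the top allowed dimension), $\secat(q) = m$ if and only if a certain primary obstruction to a section over the $(m+1)$-fold join construction is nonzero. Concretely, the obstruction to reducing $\secat$ from $m$ to $m-1$ lives in $H^{(s+1)(m+1)}$ of the appropriate space with coefficients in a local system built from $\pi_{s+1}(F)$; and by the multiplicativity of obstruction classes under the join/fibrewise join operation, this top obstruction is the $(m+1)$-st power (cup product of $m+1$ copies? — here I must be careful with the indexing: $\secat \le m$ means cover by $m+1$ sets, and reducing to $\secat \le m-1$ fails iff the $m$-fold power of the first obstruction is nonzero) of the primary obstruction $o \in H^{s+1}(Y;\mathcal A)$ for $q$ itself. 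In our case $s+1 = k$... wait, but the statement has $H^{k+1}$, so the fibre connectivity being used is that $(\Omega X)^{r-1}$ is $(k-1)$-connected with first nonvanishing homotopy in degree $k$, hence the obstruction degree is $k$... Let me instead trust the statement: the relevant class $\theta$ lives in degree $k+1$, which matches $\Pi_r$ having fibre whose relevant obstruction sits in degree $k+1$ if we index so that $X$ being $k$-connected gives $(\Omega X)^{r-1}$ being $(k-1)$-connected and $\pi_k((\Omega X)^{r-1}) \cong \pi_{k+1}(X)^{r-1}$; the primary obstruction to a section of $\Pi_r$ then lies in $H^{k+1}(E^r_B; \mathcal A)$ where $\mathcal A$ is the local system with fibre $\pi_{k+1}(X)^{r-1}$ — no, $H^{k+1}$ of the total obstruction theory with $\pi_k$ coefficients. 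I would sort this shift carefully; the upshot is that there is a universal class $\theta \in H^{k+1}(E^r_B;\mathcal A)$ such that $\secat(\Pi_r)\le m-1$ if and only if $\theta^m = 0$, and the vanishing $\Delta^*\theta = 0$ is automatic because $\Pi_r$ has a section over $\Delta(E)$, forcing the obstruction to restrict to zero there. Hence $\tc_r[p:E\to B] = m$ iff $\theta^m \ne 0$ in $H^{(k+1)m}(E^r_B;\mathcal A^{\otimes m})$, which is statement~(A).

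For part~(B) the plan is to replace the local-coefficient class $\theta$ by honest integral classes. When $B$ is simply connected, the total space $E^r_B$ fibres over $B$ with simply connected fibre, and a Serre-spectral-sequence / Leray-Hirsch argument shows that if $H_{k+1}(X;\Z)$ is torsion free then $\pi_{k+1}$ of the relevant fibre is torsion free and the local system $\mathcal A$ can be arranged to be a trivial (constant) system $\Z^N$, so $\theta = (v_1,\dots,v_N)$ with each $v_i \in H^{k+1}(E^r_B;\Z)$ and $\Delta^*\theta = 0 \iff \Delta^* v_i = 0$ for all $i$. Then the nonvanishing of the power $\theta^m \in H^{(k+1)m}(E^r_B; (\Z^N)^{\otimes m})$ is equivalent, by expanding coordinatewise, to the nonvanishing of some monomial $v_{i_1}\cdots v_{i_m}$; and since all these $v$'s are pulled from the same $\Z^N$ and products can be taken in any order, a standard rewriting (as in \cite{FG09}) reduces this to the existence of $m$ classes (with repetitions allowed, so really $v_1,\dots,v_m$ in the statement ranging over the $v_i$'s) with nonzero product. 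The main obstacle throughout is getting the obstruction-theory bookkeeping exactly right — identifying the precise degree $k+1$, the precise local system $\mathcal A$ and its restriction $\mathcal A|_E$, and verifying the multiplicative formula $o_{\text{reduce to }\le m-1} = \pm\,\theta^m$ for the higher obstructions of the join of fibrations. This is standard in the style of James's and Schwarz's work on sectional category and the Farber--Grant papers, but it requires care; everything else (the upper bound, the spectral-sequence simplification in~(B), the coordinatewise expansion) is routine.
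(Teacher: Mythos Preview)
Your plan is essentially the paper's own argument: Schwarz's fibrewise join characterisation of sectional category, the primary obstruction $\theta$ to a section of $\Pi_r$ and the fact that the obstruction for the $m$-fold join is $\theta^m$, together with the vanishing $\Delta^\ast\theta=0$ coming from the constant-path section; for (B) you decompose the trivial free-abelian coefficient system into rank-one summands and project.

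A couple of points where your exposition wobbles but the paper is clean. First, there is no ``small discrepancy'': Proposition~\ref{prop upper bound} gives
\[
\tc_r[p:E\to B]\le \Big\lceil \tfrac{r\dim X+\dim B-k}{k+1}\Big\rceil=\Big\lceil m-\tfrac{k}{k+1}\Big\rceil=m
\]
directly, so you need not appeal to any sharper CW bound. Second, your degree bookkeeping sorts itself out exactly as the paper does: $F_r=(\Omega X)^{r-1}$ is $(k-1)$-connected, so its $m$-fold join $F_{m,r}$ is $(m(k+1)-2)$-connected; since $\dim E^r_B=m(k+1)$ the \emph{only} obstruction to a section of $\Pi_r^{\ast m}$ sits in $H^{m(k+1)}(E^r_B;\pi_{m(k+1)-1}(F_{m,r}))$, and Schwarz's Theorem~1 identifies it with $\theta^m$ for $\theta\in H^{k+1}(E^r_B;\mathcal A)$, $\mathcal A=\mathcal H_k(F_r)$. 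Finally, for triviality of $\mathcal A$ in (B) you should note that $B$ simply connected and $X$ $k$-connected ($k\ge 1$) force $E^r_B$ itself to be simply connected, which is what makes the local system constant; your ``$B$ simply connected $\Rightarrow$ $\mathcal A$ trivial'' is the right conclusion but needs that intermediate step.
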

\begin{proof} To prove statement (A) we note that the inequality $\tc_r[p:E\to B] \le m$ follows from Proposition \ref{prop upper bound}. The existence of a class $\theta\in H^{k+1}(E^r_B;\mathcal A)$ satisfying $\Delta^\ast(\theta)=0$ and $\theta^m\not=0$ implies $\tc_r[p:E\to B] \ge m$, by Proposition \ref{lem:lowerbound}. We only need to prove the \lq\lq only if\rq\rq\  part of the statement. 

Indeed, assuming that $\tc_r[p:E\to B] = m$ we may apply Theorem 3 of A. Schwarz \cite{Sva66} which implies that the 
 fibrewise join $\Pi_r^{\ast m}$
 of $m$ copies of the fibration (\ref{Pir}) admits no continuous sections.
 This fibrewise join $\Pi_r^{\ast m}$ has as its fibre the $m$-fold join 
 \begin{eqnarray}\label{fib}
 F_{m, r}= F_r\ast F_r\ast \dots\ast F_r
  \end{eqnarray}
  where $F_r=(\Omega X)^{r-1}$ 
is the fibre of (\ref{Pir}). 
 
 Since $X$ is $k$-connected, the loop space $\Omega X$ is $(k-1)$-connected and hence the fibre $F_{m, r}$ is $c$-connected where
 $$c=m(k-1) + 2(m-1)= m(k+1) -2 = \dim E^r_B -2.$$  
 Here we use the well-known fact that the join of a $p$-connected space and a $q$-connected space is $(p + q + 2)$-connected.

The first obstruction $\Theta$ for a section of $\Pi_r^{\ast m}$ lies in the cohomology group 
$$
\Theta \in H^{m(k+1)}(E^r_B; \{\pi_{c+1}(F_{m, r})\}).
$$
Here $ \{\pi_{c+1}(F_{m, r})\}$ is the local system on $E^r_B$ formed by the homotopy groups of the fibres. By 
Hurewicz theorem one has an isomorphism of local systems $$ \{\pi_{c+1}(F_{m, r})\}\simeq {\mathcal H}_{c+1}(F_{m, r}).$$ 
Here ${\mathcal H}_{c+1}(F_{m, r})$ is the local system of homology groups $H_{c+1}(F_{m, r})$ of the fibres of the fibration 
$\Pi_r^{\ast m}$. 

Note that $\Theta\not=0$ is nonzero since otherwise the fibration $\Pi_r^{\ast m}$ would admit a continuous section as all higher order obstruction clearly vanish by dimensional reasons.

Using the K\"unneth formula (see Chapter 1, \cite{Sva66}) we can write 
an isomorphism of local systems 
$$
{\mathcal H}_{c+1}(F_{m, r})\simeq {\mathcal H}_k(F_r)\otimes {\mathcal H}_k(F_r) \otimes \dots\otimes {\mathcal H}_k(F_r)
$$
where $\mathcal A = {\mathcal H}_k(F_r)$ 
is the local system of the homology groups of the fibres of the original fibration (\ref{Pir}).  
By Theorem 1 from (\cite{Sva66}) one has 
$$\Theta = \theta \cup \theta\cup \dots \cup \theta= \theta^m\not = 0$$
where $$\theta\in H^{k+1}(E^r_B; {\mathcal H}_k(F_r))$$ is the first obstruction for a section of the fibration (\ref{Pir}). 

Note that $\Delta^\ast(\theta)=0$ as the fibration (\ref{Pir}) has an obvious section over the diagonal (the constant paths). This completes the proof of (A).

In the case (B) the base $B$ is simply connected and hence the local coefficient system 
$\mathcal A = {\mathcal H}_k(F_r)$ is trivial and the abelian group $H_k(F_r)$ is free abelian, i.e. 
$H_k(F_r)=\oplus_{i\in I} A_i$ where each $A_i\simeq \Z$.  Therefore the tensor power $H_k(F_r)^{\otimes m} $ is the direct sum 
$$H_k(F_r)^{\otimes m} = \bigoplus_{i_1, \dots, i_m\in I} A_{i_1}\otimes \dots \otimes A_{i_m}$$
where each tensor product $A_{i_1}\otimes \dots \otimes A_{i_m}\simeq \Z$.
Since the class $$\Theta = \theta^m\in H^{m(k+1)}(E^r_B; H_k(F_r)^{\otimes m})$$ 
is nonzero at least one of its images under the coefficient homomorphisms 
$H_k(F_r)^{\otimes m} \to A_{i_1}\otimes \dots \otimes A_{i_m}$
is nonzero. In that case, the product of the classes 
$$v_{i_s} \, \in\, H^{k+1}(E^r_B; A_{i_s}), \quad s=1, 2, \dots, m$$
is nonzero and clearly $\Delta^\ast(v_{i_s})=0.$
\end{proof}

\begin{corollary}\label{sharp1} Let $p: E\to B$ be a locally trivial bundle with fibre $X$ where the spaces $X, E, B$ are finite CW-complexes. 
Assume that $X$ is $k$-connected, where $k\ge 1$, and  $H_{k+1}(X;\Z)$ is torsion free. Additionally, assume that the base $B$ is simply connected and $$r\cdot \dim X + \dim B=(k+1)m$$ for some integers $r\ge 2$ and $m\ge 2$.  If the cup-length of the kernel
\begin{eqnarray}\label{ker}
\ker[\Delta^\ast: H^{\ast}(E^r_B;\Z)\to H^{\ast}(E;\Z)]
\end{eqnarray}
is less than $m$ then $\tc_r[p:E\to B]< m$. 
\end{corollary}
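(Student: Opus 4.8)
The plan is to derive this statement directly from part (B) of Theorem~\ref{thm:sharp}, used in its contrapositive form. First I would check that all the hypotheses of Theorem~\ref{thm:sharp} are in force: the spaces $X,E,B$ are finite CW-complexes, $X$ is $k$-connected with $k\ge 1$, the group $H_{k+1}(X;\Z)$ is torsion free, the base $B$ is simply connected, and $r\cdot\dim X+\dim B=(k+1)m$, so that $m=\dim E^r_B/(k+1)$ is an integer. In particular, part (A) of that theorem already supplies the upper bound $\tc_r[p:E\to B]\le m$ (equivalently, one reads this off Proposition~\ref{prop upper bound}: $\lceil((k+1)m-k)/(k+1)\rceil=m$). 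Hence it remains only to exclude the possibility $\tc_r[p:E\to B]=m$.

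Next I would argue by contradiction. Suppose $\tc_r[p:E\to B]=m$. Then the ``only if'' direction of Theorem~\ref{thm:sharp}(B) produces integral cohomology classes $v_1,v_2,\dots,v_m\in H^{k+1}(E^r_B;\Z)$ with $\Delta^\ast v_i=0$ for each $i=1,\dots,m$ and with $v_1v_2\cdots v_m\ne 0$ in $H^{(k+1)m}(E^r_B;\Z)$. Each $v_i$ therefore lies in the kernel $\ker[\Delta^\ast:H^\ast(E^r_B;\Z)\to H^\ast(E;\Z)]$, and since their cup-product is nonzero, the cup-length of this kernel is at least $m$. This contradicts the assumption that it is less than $m$, and so $\tc_r[p:E\to B]\le m-1<m$, which is what we wanted.

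Since the substantive work — the obstruction-theoretic identification, via Schwarz's theorem and the first obstruction of $\Pi_r^{\ast m}$, of the equality $\tc_r=m$ with the existence of a nonzero $m$-fold cup-product of degree-$(k+1)$ classes killed by $\Delta^\ast$ — has already been carried out in the proof of Theorem~\ref{thm:sharp}, there is no genuine obstacle here; the statement is essentially a repackaging. The one point worth a moment's care is that the classes furnished by Theorem~\ref{thm:sharp}(B) are elements of $\ker\Delta^\ast$, so the hypothesis bounding the cup-length of that kernel applies to them verbatim. (Note also that this hypothesis is phrased for the kernel in all degrees, which can only make it easier to invoke, since the relevant classes all sit in the single degree $k+1$.)
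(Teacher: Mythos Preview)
Your argument is correct and matches the paper's approach: both deduce the corollary as the contrapositive of Theorem~\ref{thm:sharp}(B), combined with the upper bound $\tc_r\le m$ from part (A). The paper additionally remarks that the kernel (\ref{ker}) contains no classes of degree $\le k$ (seen from the spectral sequence of $\pi_r:E^r_B\to E$, for which $\Delta$ is a section), but as you correctly observe in your final parenthetical, this extra structural information is not needed for the implication as stated.
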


\begin{proof}
This Corollary is a reformulation of the part (B) of Theorem \ref{sharp}. We only need to add that the kernel (\ref{ker}) contains no classes of degree $\le k$. This can be easily seen from the spectral sequence of the fibration 
$$
\pi_r: E^r_B\to E, \quad \pi_r(e_1, \dots, e_r)=e_1,
$$
for which the diagonal map $\Delta: E\to E^r_B$ is a section. 
\end{proof}

The results of this section will be applied later in this paper for computing the sequential parametrized topological complexity of sphere bundles.

\section{Upper bounds for the sequential parametrized topological complexity of sphere bundles}\label{sec:upper}
Let $\xi: E\to B$ be a locally trivial vector bundle of rank $q=\rk(\xi)$ equipped with a metric structure, i.e. with a continuous scalar product of each fibre. We shall denote $E=E(\xi)$ when dealing with several bundles at once. 

Let $\dot \xi: \dot E\to B$ denote the unit sphere bundle of $\xi$; the fibre of $\dot \xi$ is the sphere $S^{q-1}$. 
Our goal in this paper is to estimate the sequential parametrized topological complexity
${\sf {TC}}_r[\dot\xi: \dot E\to B].$

To state our first result we need to introduce some notations. We shall denote by $\ddot E$ the space of pairs $(e, e')$ of unit vectors $e, e'\in \dot E$ such that $\xi(e)=\xi(e')$ and $e\perp e'$. In other words, $\ddot E$ is the total space of the bundle of Stiefel manifolds associated with $\xi$.  Let $\ddot \xi: \ddot E\to \dot E$ denote the projection $\ddot \xi(e, e')=e$. Clearly 
$\ddot \xi: \ddot E\to \dot E$ is a bundle with fibre $S^{q-2}$.

\begin{proposition} \label{prop:upper} One has
$${\sf {TC}}_r[\dot\xi: \dot E\to B] \le  {\sf {secat}}[\ddot\xi: \ddot E\to \dot E] + r-1.$$
\end{proposition}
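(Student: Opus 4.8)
The plan is to build, explicitly, a partition of the space $\dot E^r_B$ into $\secat[\ddot\xi:\ddot E\to\dot E]+r$ sets on each of which the evaluation fibration $\Pi_r$ of (\ref{Pir}) admits a continuous section; by Lemma \ref{lemma para tc} this gives the bound. Write $s=\secat[\ddot\xi:\ddot E\to\dot E]$. The first step is to invoke Lemma \ref{part} for the Stiefel bundle $\ddot\xi:\ddot E\to\dot E$ to fix a partition $\dot E=V_0\sqcup\dots\sqcup V_s$ with continuous sections $\sigma_j:V_j\to\ddot E$ of $\ddot\xi$ (so $\sigma_j(e)$ is a unit vector orthogonal to $e$ in the fibre of $\xi$ over $\dot\xi(e)$) satisfying the closure property $\overline{V_i}\subseteq\bigcup_{j\ge i}V_j$; this last property is what makes the final counting go through.

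The elementary ingredient is a controlled way to join points on a fibre sphere $\Sigma_b=\dot\xi^{-1}(b)\cong S^{q-1}$ carrying the round metric induced by the metric on $\xi$: two non-antipodal points are joined by the unique minimal geodesic, while two antipodal points $x,-x$ are joined by the semicircle lying in the oriented $2$-plane spanned by $x$ and a prescribed unit vector $w\perp x$ and passing through $w$; in both cases a given unit vector $w\perp x$ is parallel-transported along the path to a unit vector orthogonal to the other endpoint (becoming $-w$ in the antipodal case). Given $(e_1,\dots,e_r)\in\dot E^r_B$, a subset $S\subseteq\{1,\dots,r-1\}$ of ``legs to be treated as antipodal'', and an initial unit vector $w_1\perp e_1$, I concatenate these moves leg by leg on the intervals $[t_i,t_{i+1}]$ (carrying the companion vector $w_i$ along to produce $w_{i+1}$) and keep the path constant on $[0,t_1]$ and on $[t_r,1]$. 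This produces a path in $\dot E^I_B$ through $e_1,\dots,e_r$, and it depends continuously on $(e_1,\dots,e_r,w_1)$ as long as $S$ stays fixed and $e_i\neq -e_{i+1}$ for all $i\notin S$.

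Next I stratify $\dot E^r_B$ by the set $\beta(e)=\{\,i:\ e_i=-e_{i+1}\,\}$ of genuinely antipodal legs. With $Y_k=\{e:|\beta(e)|=k\}$ one checks that $\{|\beta|\le k\}$ is open and that $\beta$ is locally constant on $Y_k$ (a non-antipodal leg stays non-antipodal nearby), so $Y_k$ is the disjoint union, over $k$-element subsets $S$, of the pairwise separated, relatively clopen sets $Z_S=\{\beta=S\}$; on $Z_S$ the building block applies with the constant choice $S=\beta(e)$. I set $P_0=Y_0$ with its companion-free section ($S=\emptyset$), and for $1\le k\le r-1$, $0\le j\le s$ I set $\widetilde P_{k,j}=Y_k\cap\{e_1\in V_j\}$ with the section given by the building block with $S=\beta(e)$ and $w_1=\sigma_j(e_1)$, which is continuous on $\widetilde P_{k,j}$ because $S$ is locally constant there. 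Finally I merge along anti-diagonals: $P_m=\bigcup_{k+j=m}\widetilde P_{k,j}$ for $1\le m\le(r-1)+s$. Since $\bigcup_jV_j=\dot E$ and the $Y_k$ partition $\dot E^r_B$, the sets $P_0,\dots,P_{(r-1)+s}$ form a partition into $s+r$ pieces. The decisive trick is to transport the companion vector from the \emph{first} point $e_1$, not from the first antipodal leg: this is exactly what allows the $s+1$ ``companion'' choices and the $r-1$ nontrivial strata to combine additively, yielding $s+r$ sets rather than the naive product $(r-1)(s+1)+1$.

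The step I expect to be the main obstacle is verifying that each merged set $P_m$ still carries a \emph{continuous} section of $\Pi_r$, i.e. that the pieces $\widetilde P_{k,j}$ with $k+j=m$ do not interfere. This reduces to the separation statement $\overline{\widetilde P_{k,j}}\cap\widetilde P_{k',j'}=\emptyset$ whenever $(k,j)\neq(k',j')$ and $k+j=k'+j'$. Assuming $k<k'$: on one hand $\overline{\widetilde P_{k',j'}}\subseteq\{|\beta|\ge k'\}$ is disjoint from $\widetilde P_{k,j}\subseteq Y_k$ since antipodality is closed and $|\beta|$ cannot drop in the limit; on the other hand the first coordinate of any point of $\overline{\widetilde P_{k,j}}$ lies in $\overline{V_j}\subseteq\bigcup_{l\ge j}V_l$, which is disjoint from $V_{j'}$ because $j'<j$ — here the closure property supplied by Lemma \ref{part} is used crucially. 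Together with the separation of the $Z_S$'s inside $Y_k$, this presents $P_m$ as a disjoint union of relatively clopen sets each carrying a continuous section, hence $P_m$ does too, by the gluing argument already used in the proof of Lemma \ref{part}. Applying Lemma \ref{lemma para tc} then yields $\TC_r[\dot\xi:\dot E\to B]\le s+r-1$. As a sanity check, for $r=2$ this reduces to the construction of \cite{FW23}, and over a point it recovers $\TC_r(S^{q-1})=r-1$ or $r$ according to the parity of $q$.
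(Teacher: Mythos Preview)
Your proof is correct and the counting via the ``anti-diagonal'' merge $k+j=m$ together with the closure property of Lemma \ref{part} is exactly the mechanism the paper also uses. However the route is genuinely different. The paper does not work with the interval fibration $\Pi_r$; instead it invokes Lemma \ref{lemma para tc by secat} with the star graph $Y_r$ (a central vertex $k_1$ joined by edges to $k_2,\dots,k_r$). A section of $\Pi^{Y_r}_r$ over a configuration $(e_1,\dots,e_r)$ is then simply a choice of $r-1$ paths, each running \emph{directly} from the hub $e_1$ to $e_j$. Consequently the relevant antipodal set is $J=\{\,j\ge 2: e_j=-e_1\,\}$, and the companion vector is always taken at $e_1$ itself, namely $s_i(e_1)$ with $e_1\in A_i$; no parallel transport is ever needed. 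The partition pieces are $F_0=\{e_j\neq -e_1\ \forall j\}$ and $F^i_J=\{e_1\in A_i,\ e_j=-e_1\Leftrightarrow j\in J\}$, merged into $G_s=\bigsqcup_{|J|+i=s}F^i_J$.

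What each approach buys: the star-graph trick makes the construction strictly simpler --- every leg sees the same basepoint $e_1$, so the companion vector is chosen once and used verbatim on every antipodal leg, eliminating the entire parallel-transport bookkeeping. Your approach, by contrast, works directly with the standard fibration $\Pi_r$ and hence does not need Lemma \ref{lemma para tc by secat}; the price is the transport of $w$ along the concatenated path and the somewhat more delicate verification that the resulting $w_i$ stays orthogonal to $e_i$ and varies continuously. Both stratify by (number of antipodal legs, index of the $\ddot\xi$-sector containing $e_1$) and both rely on the closure inclusion $\overline{A_i}\subset\bigcup_{j\ge i}A_j$ for the separation step, so the combinatorics and the final count $s+r$ coincide.
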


\begin{proof}
Let $Y=Y_r$ denote the graph with a central vertex $k_1$ and the vertexes $k_2, \dots, k_r$, each connected to $k_1$ by an edge as shown on Figure \ref{graph}.
\begin{figure}[h]
\begin{center}
\includegraphics[scale=0.5]{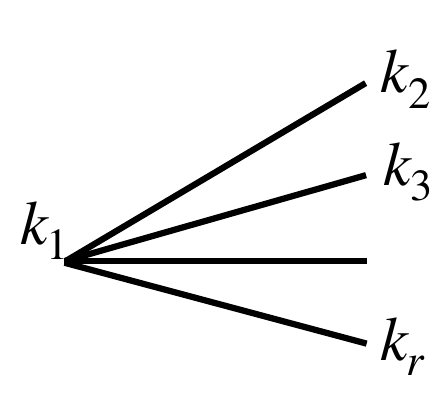}
\caption{Graph $Y_r$}
\label{graph}
\end{center}
\end{figure}
Consider the map 
\begin{eqnarray}\label{Pi}
\Pi^Y_r: \dot E_B^Y\to \dot E_B^r
\end{eqnarray}
which acts as the evaluation of a map $\psi: Y\to X$ at the points $k_1, \dots, k_r$, i.e. $\Pi^Y_r(\psi) = (\psi(k_1), \psi(k_2), \dots, \psi(k_r)).$ By Lemma \ref{lemma para tc by secat}, the sectional category of this map equals $\tc_r[\dot \xi:\dot E\to B]$. 
A partial section $s$ of $\Pi$ maps a configuration of points $(e_1, \dots, e_r)$ lying in a single fibre (i.e. such that $\dot \xi(e_1) =\dot\xi(e_2)=\dots=\dot\xi(e_r))$ to a map $\psi: Y\to \dot E$ with 
$\dot\xi\circ \psi={\rm const}$ and 
$\Pi^Y_r(\psi)= (e_1, \dots, e_r)$, i.e. $\psi(k_i)=e_i$ for $i=1, \dots, r$. In other words, to specify a section we need to specify $r-1$ paths 
$$\gamma_2, \gamma_3, \dots, \gamma_r\in (\dot E)_B^I$$ leading from 
$e_1$ to each of the points $e_2, \dots, e_r$ correspondingly, i.e. $\gamma_i(0)=e_1$ and $\gamma_i(1)=e_i$ for $i=2, 3, \dots, r$.

\subsection{Sets $A_i$} Applying Lemma \ref{part} to the bundle $\ddot \xi: \ddot E\to \dot E$ we obtain a partition 
\begin{eqnarray}
\dot E= A_0\sqcup A_1\sqcup \dots\sqcup A_k, \quad \mbox{where}\quad k=\secat[\ddot \xi: \ddot E\to \dot E],
\end{eqnarray}
such that $\overline A_i\subset \cup_{j\ge i}A_j$ and there exist continuous sections $s_i: A_i\to \ddot E$ of the fibration $\ddot \xi$ for every $i=0, 1, \dots, k$.
\subsection{The set $F_0$} 
%
%
%
%
The set $F_0\subset \dot E^r_B$ is defined as follows:
$$F_0=\{(e_1, \dots, e_r)\in \dot E^r_B; e_2\not=-e_1, \, e_3\not= - e_1, \dots, e_r\not=-e_1\}.$$
We can specify a continuous section of $\Pi^Y_r$ over $F_0$ by defining the paths $\gamma_2, \dots, \gamma_r$, where
\begin{eqnarray}\label{notinj}
\gamma_j(t) = \frac{(1-t)e_1+ te_j}{||(1-t)e_1+ te_j||}, \quad j=2, 3, \dots, r\end{eqnarray}
as explained above. 

\subsection{The sets $F^i_{J}$} 
For a nonempty subset $J\subset \{2, 3, \dots, r\}$ and for an index $i=0, 1, \dots k$, we denote by 
$F^i_{J}\subset \dot E^r_B$ the set of all configurations $(e_1, \dots, e_r)\in \dot E^r_B$ such that 
\begin{enumerate}
\item[{(1)}] $e_1\in A_i$; 
\item[{(2)}] $e_j=-e_1$ for all $j\in J$; 
\item[{(3)}] $e_j\not=-e_1$ for all $j\notin J$. 
\end{enumerate}
Each set $F_J^i$ admits a continuous section of the fibration (\ref{Pi}). Indeed, for $j\in J$ we can define 
$$
\gamma_j(t) = \cos(\pi t)\cdot e_1 + \sin(\pi t)\cdot s_i(e_1), \quad t\in [0,1].
$$
For $j\notin J$ we define $\gamma_j(t)$ by formula (\ref{notinj}). 

The closure of the set $F_J^i$ satisfies
\begin{eqnarray}
\overline{F^i_J} \subset \bigcup_{J'\supset J, \, i'\ge i}F^{i'}_{J'}.
\end{eqnarray}
In particular we see that 
$$
\overline{F^i_J}\cap F^{i'}_{J'}=\emptyset\quad \mbox{if}\quad |J|+i=|J'|+i'.
$$
\subsection{The sets $G_s$} Define 
$$G_s= \bigsqcup_{|J|+i=s} F^i_J, \quad s=1, 2, \dots, k+r-1.$$
The sections described above define jointly a continuous section of (\ref{Pi}) over each $G_s$. Together with
the set $F_0=G_0$ defined earlier we obtain a partition
$$G_0\sqcup G_1\sqcup \dots\sqcup G_{k+r-1}$$ implying that 
$\tc_r[\ddot \xi: \ddot E\to \dot E] \le k+r-1.$
This completes the proof. 
\end{proof}

\section{Corollaries of Proposition \ref{prop:upper}}\label{sec:5}
\begin{corollary}\label{cor:complex}
Let $\xi: E\to B$ be a vector bundle admitting a complex structure. Then 
\begin{eqnarray}\label{r-1}
\tc_r[\dot \xi: \dot E\to B]= r-1\quad\mbox{for all}\quad r=2, 3, \dots
\end{eqnarray}
\end{corollary}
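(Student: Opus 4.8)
The plan is to combine the upper bound of Proposition \ref{prop:upper} with the general lower bound $\tc_r[\dot\xi:\dot E\to B]\ge r-1$ (stated in the Introduction, equation following \eqref{char}, which itself follows from Proposition \ref{lem:lowerbound} applied to the pullback of a point or, more directly, from $\secat(\Pi_r)\ge r-1$ whenever the fibre is nonempty). Given these two facts, it suffices to show that $\secat[\ddot\xi:\ddot E\to\dot E]=0$, i.e. that the Stiefel bundle $\ddot\xi:\ddot E\to\dot E$ admits a global continuous section. Indeed, once that is established, Proposition \ref{prop:upper} gives $\tc_r[\dot\xi:\dot E\to B]\le 0+r-1=r-1$, and the reverse inequality is automatic, so \eqref{r-1} follows.

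First I would unwind the definition of $\ddot E$ in the presence of a complex structure. If $\xi$ carries a complex structure $J:E\to E$ (a fibrewise linear automorphism with $J^2=-\Id$, compatible with the metric in the sense that $J$ is orthogonal, which one may always arrange by averaging the metric), then for each unit vector $e\in\dot E$ the vector $J(e)$ is again a unit vector in the same fibre as $e$, and $\langle e,J(e)\rangle=0$ because $\langle e,Je\rangle=\langle Je,J^2e\rangle=-\langle Je,e\rangle$, forcing the inner product to vanish. Hence $e\mapsto(e,J(e))$ is a well-defined map $\dot E\to\ddot E$, and it is continuous since $J$ is continuous; it is visibly a section of $\ddot\xi$. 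Therefore $\secat[\ddot\xi:\ddot E\to\dot E]=0$.

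Assembling the pieces: by the displayed inequality $\tc_r[\dot\xi:\dot E\to B]\ge r-1$ valid for every vector bundle, and by Proposition \ref{prop:upper} together with the section just constructed,
$$
r-1\ \le\ \tc_r[\dot\xi:\dot E\to B]\ \le\ \secat[\ddot\xi:\ddot E\to\dot E]+r-1\ =\ r-1,
$$
which proves \eqref{r-1} for all $r\ge 2$.

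I do not anticipate a serious obstacle here; the only minor point to be careful about is the metric compatibility of the complex structure, which is needed so that $J(e)$ actually lands in the \emph{unit} sphere bundle $\dot E$ and is orthogonal to $e$. This is a standard averaging argument and could either be spelled out in one line or simply cited, since the statement of the Corollary is manifestly invariant under replacing the metric by a homotopic one (the sphere bundle and its Stiefel bundle are unchanged up to fibre-homotopy equivalence). If one prefers to avoid even this, one can instead observe directly that a complex structure on $\xi$ exhibits $\dot E$ as the total space of the unit sphere bundle of an underlying complex bundle, whose associated bundle of complex $1$-frames (equivalently, unit vectors together with a chosen orthonormal partner spanning a complex line) is exactly $\ddot E$ up to the evident identification, and this bundle has an obvious section $e\mapsto(e,ie)$.
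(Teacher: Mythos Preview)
Your proposal is correct and follows essentially the same approach as the paper: both arguments construct the section $e\mapsto(e,Je)$ of $\ddot\xi$ (the paper phrases this as multiplication by $\sqrt{-1}$) to get $\secat[\ddot\xi:\ddot E\to\dot E]=0$, apply Proposition~\ref{prop:upper} for the upper bound, and invoke the lower bound $\tc_r\ge r-1$. The only cosmetic difference is that the paper justifies the lower bound via $\tc_r[\dot\xi:\dot E\to B]\ge \tc_r(S^{q-1})=r-1$ (restriction to a fibre, with $q$ even), whereas you cite the general inequality from the Introduction; your extra care about metric compatibility of $J$ is a welcome detail the paper leaves implicit.
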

\begin{proof}
If the bundle $\xi$ admits a complex structure then $$\secat[\ddot \xi: \ddot E\to \dot E]=0.$$ Indeed, the multiplication by $\sqrt{-1}$ defines a continuous section of the bundle $\ddot \xi: \ddot E\to \dot E$ which can be explicitly given by 
$s(e) = (e, e')$ where $e'=\sqrt{-1}\cdot e$. Applying Proposition \ref{prop:upper} gives $\tc_r[\dot \xi: \dot E\to B]\le  r-1$. 
On the other hand, $$\tc_r[\dot \xi: \dot E\to B]\ge \tc_r(S^{q-1}) = r-1,$$ where $q=\rk(\xi)$. Here we first used the well-known inequality relating the sequential parametrized topological complexity with the sequential topological complexity of the fibre; on the second step we used the known fact that the sequential topological complexity of an odd dimensional sphere is $r-1$. 
\end{proof}

\begin{remark}
Clearly the value $r-1$ is the lower bound for $\tc_r[\dot \xi: \dot E\to B]$ for any vector bundle $\xi$. Moreover, $\tc_r[\dot \xi: \dot E\to B]\ge r$ if the rank $q=\rk (\xi)$ is odd. One may ask if there exist vector bundles $\xi$ admitting no complex structure and such that 
$\tc_r[\dot \xi: \dot E\to B]=r-1$ for all $r$? In other words we ask whether equation (\ref{r-1}) characterises the class of vector bundles admitting a complex structure. 
\end{remark}

Next we mention the following inequality which can be compared with the inequality of Lemma 5.9 from \cite{FW23}.

\begin{proposition}\label{prop:upper1} Let the vector bundle $\xi: E(\xi)\to B$ be the Whitney sum $\xi=\xi_1\oplus \xi_2$. 
Then 
\begin{equation*}
\secat[\ddot\xi: \ddot E(\xi)\to \dot E(\xi)] \le \secat[\dot \xi_1: \dot E(\xi_1)\to B] +\secat[\dot \xi_2: \dot E(\xi_2)\to B] +1. 
\end{equation*}
 \end{proposition}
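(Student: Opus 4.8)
The plan is to build an explicit partition of $\dot E(\xi)$, the unit sphere bundle of $\xi = \xi_1 \oplus \xi_2$, into $k_1 + k_2 + 2$ pieces, each of which carries a continuous section of the Stiefel bundle $\ddot\xi: \ddot E(\xi) \to \dot E(\xi)$, where $k_1 = \secat[\dot\xi_1: \dot E(\xi_1) \to B]$ and $k_2 = \secat[\dot\xi_2: \dot E(\xi_2) \to B]$. A point of $\dot E(\xi)$ over $b\in B$ is a unit vector $v = (v_1, v_2)$ with $v_1$ in the fibre of $\xi_1$ over $b$, $v_2$ in the fibre of $\xi_2$ over $b$, and $|v_1|^2 + |v_2|^2 = 1$. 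A section of $\ddot\xi$ over a subset amounts to choosing, continuously, a second unit vector $w = (w_1, w_2)$ over the same $b$ with $w \perp v$. The key observation is that if $v_1 \ne 0$ we can produce such a $w$ from any unit vector $u_1$ in the fibre of $\xi_1$ that is orthogonal to $v_1/|v_1|$ — but a cleaner device is to use a section $\sigma_1$ of the sphere bundle $\dot\xi_1$ itself: given a unit section value $\sigma_1(b)$ in the $\xi_1$-fibre, the vector $\sigma_1(b) - \langle \sigma_1(b), v_1\rangle \cdot v_1/|v_1|^2$ lies in the $\xi_1$-fibre, is orthogonal to $v_1$, and is nonzero precisely when $\sigma_1(b)$ is not parallel to $v_1$.

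Concretely, first I would invoke Lemma \ref{part} applied to $\dot\xi_1: \dot E(\xi_1) \to B$ to obtain a partition $B = A_0^1 \sqcup \cdots \sqcup A_{k_1}^1$ with continuous sections $\sigma_i^1: A_i^1 \to \dot E(\xi_1)$ and $\overline{A_i^1} \subset \bigcup_{j \ge i} A_j^1$, and similarly a partition $B = A_0^2 \sqcup \cdots \sqcup A_{k_2}^2$ with sections $\sigma_i^2$ for $\dot\xi_2$. Now partition $\dot E(\xi)$ according to the following rule. For a point $v = (v_1,v_2)$ over $b$: if $v_1 \ne 0$, let $i$ be the index with $b \in A_i^1$ and put $v$ in a set $P_i^{(1)}$ — but only if $\sigma_i^1(b)$, projected off $v_1$, stays away from zero; in the degenerate locus where $v_1 \ne 0$ yet $\sigma_i^1(b)$ is parallel to $v_1$ (so $v_1/|v_1| = \pm\sigma_i^1(b)$), the vector $v_2$ carries the full complementary direction, and we fall back on the $\xi_2$-data, placing $v$ according to its $A^2$-index, shifted up so the pieces remain disjoint; if $v_1 = 0$, then $v = (0, v_2)$ with $|v_2| = 1$, and we place $v$ according to its $A^2$-index using the section $\sigma_i^2$, again with the appropriate index shift. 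The degenerate locus "$v_1/|v_1| = \pm\sigma_i^1(b)$" is itself closed in $A_i^1$ pulled back to $\dot E(\xi)$, so within it $v_2 \ne 0$ automatically (else $v$ would not be a unit vector), and the $\xi_2$-projection device applies there without a further exceptional case — this is why one "$+1$" suffices rather than needing to iterate. Counting: the $P^{(1)}$-pieces contribute indices $0,\dots,k_1$, and the combined "$v_1$-degenerate or $v_1 = 0$" locus, partitioned by the $\xi_2$-data, contributes $k_2 + 1$ further pieces placed at indices $k_1+1, \dots, k_1 + k_2 + 1$; the total number of pieces is $k_1 + k_2 + 2$, giving $\secat[\ddot\xi] \le k_1 + k_2 + 1$ as claimed.

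The main obstacle I anticipate is verifying that each piece is of the right type — locally closed, or at any rate admitting the open-cover reformulation of sectional category — and that the prescribed section is genuinely continuous up to and including the boundary behaviour, i.e.\ that the closure conditions $\overline{A_i^1} \subset \bigcup_{j\ge i}A_j^1$ propagate correctly through the index shift so that the global section assembled from the pieces is continuous on each $G_s = \bigsqcup_{\text{index}=s}(\cdots)$, exactly as in the proof of Proposition \ref{prop:upper}. One must check carefully that on the overlap of closures the two competing formulas for $w$ (the $\xi_1$-projection formula and the $\xi_2$-projection formula) are not required to agree — they need not, since the pieces at different total indices have disjoint closures by the Lemma \ref{part} mechanism — and that the normalisation $w \mapsto w/|w|$ is legitimate, i.e.\ $w \ne 0$ throughout each piece, which is where the "away from the degenerate locus" conditions earn their keep. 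Once continuity on each $G_s$ is established, the conclusion is immediate. I would also remark that this argument parallels, and is the natural sphere-bundle analogue of, the subadditivity inequality for sectional category of fibrewise joins used implicitly in \cite{FW23}, Lemma 5.9.
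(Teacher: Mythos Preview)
Your proof has a genuine gap. You claim that on the degenerate locus where $v_1 \neq 0$ and $v_1/|v_1| = \pm\sigma_i^1(b)$ one has $v_2 \neq 0$ automatically, ``else $v$ would not be a unit vector''. This is false: take $v = (\sigma_i^1(b), 0)$. Since $\sigma_i^1(b)$ is a unit vector in the fibre of $\xi_1$, this $v$ lies in $\dot E(\xi)$, it satisfies $v_1/|v_1| = \sigma_i^1(b)$, and it has $v_2 = 0$. At such a point your $\xi_2$-projection device, which divides by $|v_2|$, is undefined. Attempting to repair this by projecting $(0,\sigma_j^2(b))$ off $v$ instead runs into a further exceptional set along $v_1 = 0$, $v_2 = \pm\sigma_j^2(b)$, and iterating pushes the piece count past $k_1+k_2+2$. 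So the ``this is why one `$+1$' suffices'' sentence is exactly where the argument breaks.

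The paper's argument is organised quite differently and avoids any cascade of exceptional loci. Its first piece is $G_0=\{v=(v_1,v_2): v_1\neq 0 \text{ and } v_2\neq 0\}$, on which there is a \emph{canonical} orthogonal unit vector requiring no sections at all, namely $\bigl(-\tfrac{v_1}{|v_1|}\,|v_2|,\ \tfrac{v_2}{|v_2|}\,|v_1|\bigr)$. The complement of $G_0$ in $\dot E(\xi)$ is the disjoint union $\dot E(\xi_1)\sqcup \dot E(\xi_2)$, and on $\dot E(\xi_\alpha)$ any unit vector in the \emph{other} summand $\xi_\beta$ is automatically orthogonal to $v$; hence a section of $\dot\xi_\beta$ over $W\subset B$ yields a section of $\ddot\xi$ over $\dot E(\xi_\alpha)\cap\xi_\alpha^{-1}(W)$ with no projection needed. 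The remaining step is to invoke the product-cover lemma (Lemma~3.5 of \cite{FGLO}) to merge the two open covers of $B$ into a single cover $W_0,\dots,W_{k_1+k_2}$ on each member of which \emph{both} $\dot\xi_1$ and $\dot\xi_2$ admit sections; then each $F_j = \bigl(\dot E(\xi_1)\cap\xi_1^{-1}(W_j)\bigr)\sqcup\bigl(\dot E(\xi_2)\cap\xi_2^{-1}(W_j)\bigr)$ is a single piece, and $G_0, F_0,\dots,F_{k_1+k_2}$ give the required cover of size $k_1+k_2+2$.
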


\begin{proof} Let us assume that $\xi=\xi_1 \oplus \xi_2$. We want to estimate above the sectional category 
$\secat[\ddot\xi: \ddot E(\xi)\to \dot E(\xi)]$. A vector $e\in \dot E(\xi)$ can be represented as a sum
 $$ e= e_1 +e_2, \quad |e_1|^2+|e_2|^2 =1,$$ 
where $e_1\in E(\xi_1)$ and $e_2\in E(\xi_2)$.

Denote by $G_0\subset \dot E(\xi)$ the set of unit vectors $e= e_1 +e_2 $ such that $e_1\not=0$ and $e_2\not=0$. We define the section $s_0: G_0\to \ddot E(\xi)$ by 
$s_0(e) =((e_1, e_2), (e'_1, e'_2))\in \ddot E(\xi)$ where 
$$e'_1 =- \frac{e_1}{|e_1|}\cdot |e_2|, \quad e'_2= \frac{e_2}{|e_2|}\cdot |e_1|.$$

For $\alpha=1, 2$, denote $k_\alpha=\secat[\dot\xi_\alpha: \dot E(\xi_\alpha)\to B]$ and let 
$B=U^\alpha_0\cup U_1^\alpha\cup \dots\cup U_{k_\alpha}^\alpha$ be an open cover with each set $U_j^\alpha$ admitting a continuous section of the bundle $\dot \xi_\alpha: \dot E(\xi_\alpha)\to B$. Using Lemma 3.5 from \cite{FGLO} we may find an open cover
$$B = W_0\cup W_1\cup \dots W_k, \quad\mbox{where}\quad k=k_1+k_2,$$
such that each open set $W_j$ admits a continuous section $\sigma_j^\alpha: W_j\to \dot E(\xi_\alpha)$ 
for both bundles $\alpha=1$ and $\alpha=2$. 
%

For $j=0, 1, \dots, k$ define the set $F_j=F_j^1\sqcup F_j^2\subset \dot E(\xi)$ as follows 
$$F_j^\alpha=\xi_\alpha^{-1}(W_j) \, \subset \dot E(\xi_\alpha)\subset \dot E(\xi), \quad \alpha= 1, 2. $$
We may define a continuous section $s_j^\alpha: F_j^\alpha \to \ddot E(\xi)$ by the formula
\begin{eqnarray}
s_j^\alpha(e) = (e, \sigma_j^\beta(\xi_\alpha(e))), \quad \beta\not=\alpha, \quad \beta\in \{1, 2\}.
\end{eqnarray}
Two sections $s_j^1$ and $s_j^2$ jointly define a continuous section $s_j: F_j\to \ddot E(\xi)$ since obviously
$\overline {F_j^1}\cap \overline {F_j^2}=\emptyset$.

Now we see that the sets 
$$G_0, F_0, F_1, \dots, F_k$$
cover $\dot E(\xi)$ and each of these sets admits a continuous section of $\ddot\xi: \ddot E(\xi)\to \dot E(\xi)$. 
This gives the inequality 
$$\secat[\ddot\xi: \ddot E(\xi)\to \dot E(\xi)] \le k+1.$$
This completes the proof.
\end{proof}

\begin{corollary}
If $\xi=\eta\oplus \epsilon$ where $\epsilon$ is the trivial line bundle then $$\secat[\ddot\xi: \ddot E(\xi)\to \dot E(\xi)]\le \secat[\dot \eta: \dot E(\eta)\to B] +1$$ and 
$$\tc_r[\dot \xi: \dot E\to B]\le \secat[\dot \eta: \dot E(\eta)\to B] + r.$$ 
\end{corollary}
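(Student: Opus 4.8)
The plan is to derive both inequalities as immediate formal consequences of Proposition \ref{prop:upper1} and Proposition \ref{prop:upper}, the only substantive observation being that the unit sphere bundle of a trivial line bundle admits a global section.

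First I would apply Proposition \ref{prop:upper1} with $\xi_1=\eta$ and $\xi_2=\epsilon$. This requires knowing $\secat[\dot\epsilon:\dot E(\epsilon)\to B]$. Since $\epsilon$ is the trivial line bundle, its unit sphere bundle is the trivial $S^0$-bundle, i.e. the disjoint union $B\sqcup B\to B$ of two copies of $B$ each mapped by the identity; picking either copy gives a continuous section defined on all of $B$, so $\secat[\dot\epsilon:\dot E(\epsilon)\to B]=0$. Substituting into the inequality of Proposition \ref{prop:upper1} gives
$$\secat[\ddot\xi:\ddot E(\xi)\to \dot E(\xi)]\ \le\ \secat[\dot\eta:\dot E(\eta)\to B]+0+1\ =\ \secat[\dot\eta:\dot E(\eta)\to B]+1,$$
which is the first assertion.

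For the second assertion I would combine this with Proposition \ref{prop:upper}, which states $\tc_r[\dot\xi:\dot E\to B]\le \secat[\ddot\xi:\ddot E\to \dot E]+r-1$. Feeding in the bound just obtained yields
$$\tc_r[\dot\xi:\dot E\to B]\ \le\ \bigl(\secat[\dot\eta:\dot E(\eta)\to B]+1\bigr)+r-1\ =\ \secat[\dot\eta:\dot E(\eta)\to B]+r,$$
completing the argument. I do not expect any real obstacle: the only verification needed is the (trivial) fact that $\dot E(\epsilon)\to B$ has a section, and the rest is substitution into the two already-established propositions. If a more self-contained route were desired, one could instead reprove the $+1$ directly by sectioning $\ddot E(\xi)\to\dot E(\xi)$ over the open set where the $\epsilon$-component of a unit vector is nonzero (taking its orthogonal complement within the $\eta$-direction, or within $\epsilon$ itself) and patching over the complementary closed set where that component vanishes, which carries a section of $\dot\eta$; but invoking Proposition \ref{prop:upper1} makes this unnecessary.
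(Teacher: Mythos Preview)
Your proposal is correct and matches the paper's intended argument: the corollary is stated without proof precisely because it is meant to follow by applying Proposition \ref{prop:upper1} with $\xi_1=\eta$, $\xi_2=\epsilon$ (using that the trivial line bundle has $\secat[\dot\epsilon]=0$) and then feeding the result into Proposition \ref{prop:upper}, exactly as you do.
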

\begin{corollary}\label{cor:16}
If a vector bundle $\xi: E\to B$ admits two non-vanishing linearly independent sections then 
$\secat[\ddot\xi: \ddot E\to \dot E]\le 1$ and $\tc_r[\dot \xi: \dot E\to B]\le r.$ If additionally the rank $q=\rk(\xi)$ is odd then 
$\tc_r[\dot \xi: \dot E\to B]= r.$
\end{corollary}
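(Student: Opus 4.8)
The plan is to derive the first inequality $\secat[\ddot\xi:\ddot E\to\dot E]\le 1$ directly from Proposition \ref{prop:upper1} (and its first Corollary), and then feed this into Proposition \ref{prop:upper} to get the bound on $\tc_r$; the lower bound in the odd-rank case will come from the cohomological lower bound of Proposition \ref{lem:lowerbound} combined with the nonvanishing of the Euler class of the fibre $S^{q-1}$.

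First I would observe that two non-vanishing linearly independent sections of $\xi$ give, after applying the Gram--Schmidt process fibrewise and using the metric structure, a trivial sub-bundle of rank $2$; hence $\xi\cong\eta\oplus\epsilon^2$ for some bundle $\eta$, where $\epsilon^2$ is the trivial rank-$2$ bundle. Writing $\xi=(\eta\oplus\epsilon)\oplus\epsilon$ and applying the first Corollary of Proposition \ref{prop:upper1} twice, we get $\secat[\ddot\xi:\ddot E\to\dot E]\le\secat[\dot{(\eta\oplus\epsilon)}:\dot E(\eta\oplus\epsilon)\to B]$; but $\dot{(\eta\oplus\epsilon)}$ has a nowhere-zero section coming from the $\epsilon$-summand, so its sectional category is $0$, giving $\secat[\ddot\xi:\ddot E\to\dot E]\le 1$. (Alternatively one can argue more directly: a trivial rank-$2$ sub-bundle lets one rotate any unit vector $e$ to a unit vector orthogonal to it whenever $e$ is not too close to the ``pole'', and one extra open set handles the polar region — this is exactly the $G_0, F_0,\dots$ pattern of Proposition \ref{prop:upper1}'s proof with $k_1=k_2=0$.) Then Proposition \ref{prop:upper} immediately yields $\tc_r[\dot\xi:\dot E\to B]\le\secat[\ddot\xi:\ddot E\to\dot E]+r-1\le r$.

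For the lower bound when $q=\rk(\xi)$ is odd, I would show $\tc_r[\dot\xi:\dot E\to B]\ge r$. The cleanest route is to use $\tc_r[\dot\xi:\dot E\to B]\ge\tc_r(S^{q-1})$ (the fibre inequality already invoked in the proof of Corollary \ref{cor:complex}) together with the classical fact that $\tc_r(S^{q-1})=r$ when $q-1$ is even and positive. This requires $q-1\ge 2$, i.e. $q\ge 3$; since $\xi$ has two linearly independent sections its rank is at least $2$, and an odd rank $\ge 2$ means $q\ge 3$, so the fibre is an even-dimensional sphere of positive dimension and the fact applies. Combined with the upper bound $\tc_r\le r$ this forces equality $\tc_r[\dot\xi:\dot E\to B]=r$.

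The only genuine point requiring care is the reduction $\xi\cong\eta\oplus\epsilon^2$: one must check that linear independence of the two sections (not merely non-vanishing) is what makes the span a rank-$2$ sub-bundle with well-defined orthogonal complement, and that the metric structure on $\xi$ lets one split off this sub-bundle as an orthogonal direct summand. This is routine but is the step where the hypothesis is actually used; everything else is an assembly of results already established in the excerpt.
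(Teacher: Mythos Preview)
Your approach is essentially the paper's: reduce to $\xi\cong\eta\oplus\epsilon^2$, invoke Proposition~\ref{prop:upper1} (the paper applies it directly with $\xi_1=\eta\oplus\epsilon$ and $\xi_2=\epsilon$, each having $\secat=0$, rather than going through its Corollary) to get $\secat[\ddot\xi]\le 1$, then Proposition~\ref{prop:upper} and the fibre inequality $\tc_r[\dot\xi]\ge\tc_r(S^{q-1})=r$ for $q$ odd. Note a small slip in your write-up: applying the Corollary once (not ``twice'') gives $\secat[\ddot\xi]\le\secat[\dot{(\eta\oplus\epsilon)}]+1$, and the $+1$ is missing from your displayed inequality --- but your conclusion $\secat[\ddot\xi]\le 1$ is correct.
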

\begin{proof}
We apply Proposition \ref{prop:upper1} with both bundles $\dot \xi_1: \dot E(\xi_1)\to B$  and $\dot \xi_2: \dot E(\xi_2)\to B$ having globally defined continuous sections. Proposition \ref{prop:upper1} gives $\secat[\ddot \xi: \ddot E(\xi)\to \dot E(\xi)]\le 1$ and finally by Proposition \ref{prop:upper}, $\tc_r[\dot \xi: \dot E(\xi)\to B]\le r$. If $q=\rk(\xi)$ is odd then $\tc_r(S^{q-1})=r$ and hence 
$\tc_r[\dot \xi: \dot E(\xi)\to B]\ge r$. 
\end{proof}

\section{Cohomology algebra of a sphere bundle}\label{sec:6}

In this section we describe the cohomology algebra of a sphere bundle admitting a section. 
The result of this section repeats a part of the arguments of the proof of Theorem 4.1 from \cite{FW23}. 
Cohomology algebras of more general sphere bundles were studied by W. Massey in \cite{Ma}. 
All cohomology groups in this section are with integer coefficients unless the coefficients are indicated explicitly. 

\begin{theorem}\label{thm:sec}
Let $\eta: E(\eta)\to B$ be an oriented rank $q-1$ real vector bundle and let $\xi=\eta\oplus \epsilon$ be the Whitney sum of $\eta$ and the trivial line bundle $\epsilon$. The integral cohomology ring $H^\ast(\dot E(\xi))$ of the unit sphere bundle $\dot E(\xi)$ is the quotient of the polynomial ring $H^\ast(B)[\mathfrak u]$, where $\deg \u=q-1$, with respect to the principal ideal generated by the class 
\begin{eqnarray}\u^2 -\mathfrak e(\eta) \cdot \u,\end{eqnarray}
where $\e(\eta)\in H^{q-1}(B)$ denotes the Euler class of $\eta$. 
\end{theorem}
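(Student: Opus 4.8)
The plan is to use the Leray--Hirsch / Gysin machinery for the sphere bundle $\dot\xi \colon \dot E(\xi)\to B$, exploiting the extra structure coming from the splitting $\xi=\eta\oplus\epsilon$. Since $\xi$ contains a trivial line summand, the unit sphere bundle $\dot E(\xi)$ has an obvious section (namely $b\mapsto$ the unit vector in the $\epsilon$-direction), and in fact two such sections given by $\pm$ that unit vector. The first step is to produce the generator $\u\in H^{q-1}(\dot E(\xi))$. The fibre of $\dot\xi$ is $S^{q-1}$, and I would obtain $\u$ as the image, under the map $\dot E(\xi)\to T$ collapsing a suitable subspace, of the Thom class of $\eta$; concretely, decompose a unit vector $e\in \dot E(\xi)$ as $e=v+t$ with $v\in E(\eta)$, $t\in \epsilon$, $|v|^2+t^2=1$, and use the \lq\lq upper hemisphere'' $\{t\ge 0\}$, which is the Thom space / disc bundle $D(\eta)$ of $\eta$, with the equatorial $\dot E(\eta)$ as its boundary. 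This identifies $H^{q-1}(\dot E(\xi))$ with a sum involving $H^{q-1}(B)$ and a class $\u$ restricting to a generator of $H^{q-1}(S^{q-1})$ on each fibre.

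Next I would invoke Leray--Hirsch: because $\u$ restricts to a generator of the cohomology of the fibre $S^{q-1}$ and $1$ restricts to the generator in degree $0$, the classes $1,\u$ form a basis of $H^\ast(\dot E(\xi))$ as a free module over $H^\ast(B)$. Hence as an $H^\ast(B)$-module,
$$H^\ast(\dot E(\xi)) \;=\; H^\ast(B)\cdot 1 \;\oplus\; H^\ast(B)\cdot \u ,$$
and it only remains to identify the ring structure, i.e. to compute $\u^2\in H^{2(q-1)}(\dot E(\xi))$ in terms of this basis. Write $\u^2 = \pi^\ast(a) + \pi^\ast(b)\cdot\u$ for unique $a\in H^{2(q-1)}(B)$, $b\in H^{q-1}(B)$, where $\pi=\dot\xi$. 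To pin down $a$ and $b$ I would restrict along the two disjoint sections $\sigma_\pm\colon B\to \dot E(\xi)$ corresponding to the two poles $\pm t$: a standard computation (the class $\u$ can be normalised so that $\sigma_+^\ast\u = 0$ and $\sigma_-^\ast\u = \e(\eta)$, since the difference of the two polar sections, measured by the Euler class of the complementary bundle $\eta$, is exactly $\e(\eta)$). Pulling $\u^2=\pi^\ast(a)+\pi^\ast(b)\u$ back by $\sigma_+$ gives $a=0$; pulling back by $\sigma_-$ gives $\e(\eta)^2 = a + b\,\e(\eta) = b\,\e(\eta)$, from which one reads off $b = \e(\eta)$ (and in any case the relation $\u^2=\e(\eta)\u$ holds after these restrictions, and the two polar sections together detect the module generators). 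Therefore $\u^2 = \e(\eta)\cdot\u$, giving a surjective ring map $H^\ast(B)[\u]/(\u^2-\e(\eta)\u)\to H^\ast(\dot E(\xi))$, which is an isomorphism because both sides are free $H^\ast(B)$-modules on $\{1,\u\}$ and the map sends basis to basis.

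The main obstacle I anticipate is the careful identification of the restrictions $\sigma_\pm^\ast\u$, equivalently the correct normalisation of $\u$ and the sign/identification of the Euler class: one must set up the Thom-class description so that $\u$ genuinely restricts to $\e(\eta)$ (up to sign) on one section and to $0$ on the other, which is where the hypothesis that $\eta$ is \emph{oriented} of rank $q-1$ is used. An equivalent and perhaps cleaner route is to run the Gysin sequence of $\dot\xi\colon\dot E(\xi)\to B$: the Euler class of $\xi=\eta\oplus\epsilon$ vanishes (because of the trivial summand), so the Gysin sequence splits into short exact sequences $0\to H^\ast(B)\xrightarrow{\pi^\ast} H^\ast(\dot E(\xi))\to H^{\ast-(q-1)}(B)\to 0$, which reproves freeness, and the multiplicative structure is then fixed by the same polar-section computation. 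Everything else — exactness, naturality of Thom/Euler classes, and the module computations — is routine.
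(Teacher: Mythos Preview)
Your overall strategy --- construct $\u$ via the Thom class of $\eta$ and invoke Leray--Hirsch for the module structure --- matches the paper's approach closely. The gap is in your computation of the ring structure. You write $\u^2=\pi^\ast(a)+\pi^\ast(b)\cdot\u$ and attempt to determine $a$ and $b$ by restricting along the two polar sections $\sigma_\pm$. With your normalisation $\sigma_+^\ast\u=0$, $\sigma_-^\ast\u=\e(\eta)$, pulling back gives $a=0$ and then $\e(\eta)^2=b\cdot\e(\eta)$. But this last equation does \emph{not} let you read off $b=\e(\eta)$: it only says $(b-\e(\eta))\cdot\e(\eta)=0$, and $\e(\eta)$ may be a zero-divisor (or zero) in $H^\ast(B)$. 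Your parenthetical claim that ``the two polar sections together detect the module generators'' is exactly what fails: the pair $(\sigma_+^\ast,\sigma_-^\ast)$ sends $\pi^\ast(a)+\pi^\ast(b)\u$ to $(a,\,a+b\e(\eta))$, which recovers $a$ and $b\e(\eta)$ but not $b$.

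The paper closes this gap by working one level up, in relative cohomology. Having normalised $\u$ so that it vanishes on the ``lower hemisphere'' $W'$ (equivalently on the section $\sigma_-$), one lifts $\u$ uniquely to $\tilde\u\in H^{q-1}(\dot E(\xi),W')\cong H^{q-1}(W,\partial W)$, which is the Thom class of $\eta$. Then the Thom-class squaring formula
\[
\tilde\u\cup\tilde\u \;=\; (\tilde\u|_W)\cup\tilde\u \;=\; (\dot\xi|_W)^\ast(\e(\eta))\cup\tilde\u
\]
holds in $H^{2(q-1)}(W,\partial W)$, and restricting to absolute cohomology gives $\u^2=\e(\eta)\cdot\u$ directly, with no need to invert $\e(\eta)$. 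You in fact set up exactly this Thom-class description of $\u$ in your first paragraph; the fix is to use it to compute $\u^2$ rather than switching to the section argument. Your Gysin alternative at the end inherits the same problem, since you defer the multiplicative statement to ``the same polar-section computation''.
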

\begin{proof} Fix an orientation of $\epsilon$, i.e. select a section $s: B\to \dot E(\epsilon)\subset \dot E(\xi)$. Together with the orientation of $\eta$ this determines an orientation of $\xi$.

Since the sphere bundle $\dot \xi: \dot E(\xi)\to B$ has a section its Euler class vanishes, $\e(\xi)=0\in H^q(B)$. It
admits a cohomological extension of the fibre, i.e. a cohomology class $\u\in H^{q-1}(\dot E(\xi))$ such that its restriction onto every fibre is the fundamental class of the fibre, determined by the orientation of $\xi$.
The Leray - Hirsch theorem (see \cite{Spa66}, chapter 5, \S 7) gives a 
graded group isomorphism
$H^\ast(\dot E(\xi))\simeq H^\ast(B)\otimes H^\ast(S^{q-1}).$
In other words, every cohomology class $a\in H^\ast(\dot E(\xi))$ has a unique representation in the form 
\begin{eqnarray} \label{rep}
a= u + v\cdot \u,
\end{eqnarray}
where $u, v\in H^\ast(B)$. Here we identify $H^\ast(B)$ with its image $\dot \xi^\ast (H^\ast(B))\subset H^\ast(\dot E(\xi))$ under the ring homomorphism $\dot\xi^\ast: H^\ast(B)\to H^\ast(\dot E(\xi))$, which is a monomorphism; the symbol $v\cdot \u$ in (\ref{rep}) stands for the cup-product $\dot\xi^\ast(v)\cup \u$. 
The class $\u$ is not unique, it can be replaced by any class of the form $\u'=\u+u$ where $u\in H^{q-1}(B)$. 
We show below that there exists a specific choice of the fundamental class $\u$ such that 
\begin{eqnarray}\label{formula}
\u^2 = \e(\eta)\cdot \u \, \in \, H^{2(q-1)}(\dot E(\xi)).
\end{eqnarray}
Note that in this formula one has $ \e(\eta)\cdot \u =  \u\cdot \e(\eta)$: indeed, if $q$ is odd the classes commute but for $q$ even the Euler class $ \e(\eta)$ has order 2.  

Denote by $W\subset \dot E(\xi)$ the set of vectors $e$ such that their scalar product with the section $s(\xi(e))$ is non-negative, 
$$W=\{e\in \dot E(\xi); \, \, \langle e, s(\xi(e))\rangle \, \ge  0\}.$$
Similarly, we denote 
$$W'=\{e\in \dot E(\xi); \, \, \langle e, s(\xi(e))\rangle \, \le  0\}.$$
Each of the sets $W, W'$ is homeomorphic to the total space of the unit disc bundle of $\eta$; the intersection $$W\cap W'=\partial W=\dot E(\eta)$$ is the total space of the unit sphere bundle of $\eta$. 

Let $\u'\in H^{q-1}(\dot E(\xi))$ be an arbitrary cohomological extension of the fibre. Denote by $\sigma: B\to 
\dot E(\xi)$ the reflected section, i.e. $\sigma(b) = -s(b)$ for $b\in B$. Then the class $\u=\u'-a$, where
$a=\sigma^\ast (\u')\in H^{q-1}(B)$, is a cohomological extension of the fibre satisfying $\sigma^\ast(\u)=0$. 
We show below that such $\u$ satisfies (\ref{formula}). 

The equation $\sigma^\ast(\u)=0$ implies $\u|_{W'}=0$ and hence $\u$ can be lifted to a relative cohomology class 
$$\tilde \u\in H^{q-1}(\dot E(\xi), W')\simeq H^{q-1}(W, \partial W)\simeq H^{q-1}(W/ \partial W).$$
The long exact sequence 
$$\dots \to H^{q-2}(\dot E(\xi))\stackrel {\simeq} \to H^{q-2}(W') \to H^{q-1}(\dot E(\xi), W') \to H^{q-1}(\dot E(\xi))$$
shows that the lift $\tilde \u$ is unique. Clearly, $W/\partial W$ is the Thom space of the bundle $\eta$ and  $\tilde \u\in H^{q-1}(W/\partial W)$ is the Thom class, see \S 9 of \cite{MS}. By the definition the class 
\begin{eqnarray}
\label{euclass}
s^\ast(\tilde \u)=\e(\eta)\in H^{q-1}(B)
\end{eqnarray}
is the Euler class of the bundle $\eta$. Then in the group $H^{2(q-1)}(W, \partial W)$ we have 
$$\tilde \u\cup \tilde \u = (\tilde \u|_W) \cup \tilde \u = (\xi|_W)^\ast(\e(\eta))\cup \tilde \u.$$
Viewing this as an equality in $H^{2(q-1)}(\dot E(\xi), W')$ and applying the restriction homomorphism 
$H^{2(q-1)}(\dot E(\xi), W') \to H^{2(q-1)}(\dot E(\xi))$ gives (\ref{formula}). 
\end{proof}
For future reference we state the following Corollary which is proven by the arguments above.

\begin{corollary} \label{cor:4} Under the conditions of Theorem \ref{thm:sec}:
\begin{enumerate}\item[{(A)}] There exists a unique choice of the cohomological extension of the fibre $\u\in H^{q-1}(\dot E(\xi))$ such that 
\begin{eqnarray}\label{ext}
 s^\ast(\u) = \e(\eta)  \quad \mbox{and}\quad \u^2=\e(\eta)\cdot \u,
\end{eqnarray}
where $s: B\to \dot E(\xi)$ is the section determined by the orientation of the bundle $\epsilon$. 
\item[{(B)}]The homomorphism $s^\ast: H^\ast(\dot E(\xi)) \to H^\ast(B)$ is surjective and its kernel is the principal ideal generated by the class $\u- \e(\eta)$. 

\item[{(C)}] The length of the longest nontrivial product of classes in the ideal
$$
\ker[s^\ast: H^\ast(\dot E(\xi)) \to H^\ast(B)]
$$
equals $\h(\e(\eta))+1$ where $\h(\e(\eta))$ denotes the largest integer $k$ such that the power 
$\e(\eta)^k\in H^\ast(B)$ is nonzero. 
\end{enumerate}
\end{corollary}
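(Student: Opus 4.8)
The plan is to deduce all three parts directly from the ring description established in Theorem \ref{thm:sec}, namely $H^\ast(\dot E(\xi))\cong H^\ast(B)[\u]/(\u^2-\e(\eta)\u)$ together with the normalisation $s^\ast(\u)=\e(\eta)$ recorded in equation (\ref{euclass}) of the proof. Part (A) is essentially a bookkeeping statement: the proof of Theorem \ref{thm:sec} already produced \emph{a} cohomological extension $\u$ with $\sigma^\ast(\u)=0$ and $\u^2=\e(\eta)\u$, and then $s^\ast(\u)=s^\ast(\tilde\u)=\e(\eta)$ by (\ref{euclass}). For uniqueness I would observe that any other cohomological extension differs by a class $u\in H^{q-1}(B)$, i.e. $\u'=\u+u$; imposing $s^\ast(\u')=\e(\eta)$ forces $s^\ast\dot\xi^\ast(u)=u=0$ since $\dot\xi\circ s=\mathrm{id}_B$, so $\u'=\u$. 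One should check this single condition already implies the quadratic relation, which it does because $\u'^2=\u^2$ when $u=0$; alternatively one notes the relation is an identity in the ring independent of the choice. So (A) requires no real work beyond citing the construction in the theorem's proof.

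For part (B), surjectivity of $s^\ast$ is immediate from $\dot\xi\circ s=\mathrm{id}_B$, since $s^\ast\circ\dot\xi^\ast=\mathrm{id}$ on $H^\ast(B)$. To identify the kernel, I would use the Leray--Hirsch decomposition: every class writes uniquely as $a=u+v\u$ with $u,v\in H^\ast(B)$, and then $s^\ast(a)=u+v\cdot\e(\eta)$ using $s^\ast(\u)=\e(\eta)$. Hence $s^\ast(a)=0$ iff $u=-v\,\e(\eta)$, i.e. $a=v\u-v\,\e(\eta)=v\,(\u-\e(\eta))$, which shows the kernel is exactly the principal ideal generated by $\u-\e(\eta)$. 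It is worth noting as a sanity check that $(\u-\e(\eta))$ is indeed a genuine element — well-defined regardless of the sign issue when $q$ is even since $\e(\eta)$ has order $2$ in that case and the relation $\u^2=\e(\eta)\u$ holds on the nose.

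Part (C) is the substantive one. Set $\h=\h(\e(\eta))$, the largest $k$ with $\e(\eta)^k\neq 0$ in $H^\ast(B)$. I want to show the longest nonzero product of elements of the ideal $\mathcal I=(\u-\e(\eta))$ has length $\h+1$. Write $w=\u-\e(\eta)$. The key computation is to evaluate powers of $w$: using $\u^2=\e(\eta)\u$ one gets $w^2=\u^2-2\e(\eta)\u+\e(\eta)^2=-\e(\eta)\u+\e(\eta)^2=-\e(\eta)(\u-\e(\eta))=-\e(\eta)\,w$, and inductively $w^{n}=(-\e(\eta))^{n-1}w=(-1)^{n-1}\e(\eta)^{n-1}(\u-\e(\eta))$. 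Now in the Leray--Hirsch basis this is $(-1)^{n-1}\bigl(\e(\eta)^{n-1}\u-\e(\eta)^n\bigr)$, whose $\u$-component is $\pm\e(\eta)^{n-1}$; since the two summands $\e(\eta)^{n-1}\u$ and $\e(\eta)^n$ lie in complementary summands of $H^\ast(B)\oplus H^\ast(B)\u$, the class $w^n$ is nonzero precisely when $\e(\eta)^{n-1}\neq 0$, i.e. when $n-1\le\h$, i.e. $n\le\h+1$. This gives a product of length $\h+1$ of ideal elements (all equal to $w$) that is nonzero, and $w^{\h+2}=0$. For the upper bound — that \emph{no} product of $\h+2$ ideal elements can be nonzero — I would take arbitrary $a_1,\dots,a_{\h+2}\in\mathcal I$, write each as $a_i=v_i w$ with $v_i\in H^\ast(\dot E(\xi))$ (principal ideal), and compute $a_1\cdots a_{\h+2}=\bigl(\prod v_i\bigr)w^{\h+2}=0$; here I use that $w^{\h+2}=(-1)^{\h+1}\e(\eta)^{\h+1}(\u-\e(\eta))$ and $\e(\eta)^{\h+1}=0$, so $w^{\h+2}=0$ outright, making the whole product vanish regardless of the $v_i$. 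The main obstacle is just the sign bookkeeping in $w^n$ and making the "complementary summands" argument rigorous — i.e. being careful that $\e(\eta)^{n-1}\u\neq 0$ in $H^\ast(\dot E(\xi))$ exactly when $\e(\eta)^{n-1}\neq 0$ in $H^\ast(B)$, which is precisely the injectivity of $v\mapsto v\u$ furnished by Leray--Hirsch; everything else is a short direct calculation.
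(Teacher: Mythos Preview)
Your proposal is correct and follows essentially the same route as the paper: part (B) is argued identically via the Leray--Hirsch decomposition $a=u+v\u$, and part (C) rests on the same key identity $(\u-\e(\eta))^k=(-\e(\eta))^{k-1}(\u-\e(\eta))$ (the paper's equation (\ref{uek})) together with the Leray--Hirsch injectivity of $v\mapsto v\u$. Your explicit upper-bound step, writing each ideal element as $v_i w$ and using $w^{\h+2}=0$, makes precise what the paper leaves as ``obvious consequences of (A) and (B)''.
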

\begin{proof}
(A) The left formula in (\ref{ext}) follows from (\ref{euclass}). The uniqueness of such $\u$ is obvious; it follows from the arguments of the proof of Theorem \ref{thm:sec}. The right formula in (\ref{ext}) is a part of the statement of Theorem \ref{thm:sec}.

To prove (B) consider a class $a=u+v\cdot \u$ satisfying $s^\ast(a)=0$. Since $s^\ast(u)=u$, $s^\ast(v)=v$ and 
$s^\ast(\u)=\e(\eta)$ the equality $s^\ast(a)=0$ gives $u=-v\cdot \e(\eta)$ and therefore $a=v\cdot (\u-\e(\eta)).$

Finally to prove (C) we note that the powers of the class $\u-\e(\eta)$ are given by the formula
\begin{eqnarray}\label{uek}
(\u-\e(\eta))^k = (-\e(\eta))^{k-1}\cdot (\u-\e(\eta))
\end{eqnarray}
as follows by induction from (A). 
We see that $k$-th power is nonzero for $k=\h(\e(\eta))+1$ and it vanishes for $k=\h(\e(\eta))+2$. The remaining statements are obvious consequences of statements (A) and (B). 
\end{proof}

Here is a similar statement with $\Z_2$ coefficients; note that we do not require the bundle $\xi$ to be orientable:

\begin{theorem}\label{thm:sphere2}
Let $\xi: E(\xi)\to B$ be a rank $q-1$ vector bundle such that the unit sphere bundle $\dot \xi: \dot E(\xi)\to B$ admits a continuous section $s: B\to \dot E(\xi)$. Then:
\begin{enumerate}
\item[{(a)}]
 there exists a unique cohomological extension of the fibre $\u\in H^{q-1}(\dot E(\xi);\Z_2)$ such that 
\begin{eqnarray}\label{ext2}
 s^\ast(\u) = w_{q-1}(\xi)  \quad \mbox{and}\quad \u^2=w_{q-1}(\xi)\cdot \u.
\end{eqnarray} 
Here $w_{q-1}(\xi)\in H^{q-1}(B;\Z_2)$ denotes the Stiefel - Whitney class of $\xi$; we identify $H^\ast(B;\Z_2)$ with its image under the injective homomorphism $\dot\xi^\ast: H^\ast(B;\Z_2)  \to H^\ast(\dot E(\xi);\Z_2)$.
\item[{(b)}]
The cohomology algebra $H^\ast(\dot E(\xi);\Z_2)$ is the quotient of the polynomial extension $H^\ast(B;\Z_2)[\u]$ with respect to the ideal generated by $\u^2+w_{q-1}(\xi)\cdot \u$. 
\item[{(c)}] The homomorphism $s^\ast: H^\ast(\dot E(\xi);\Z_2) \to H^\ast(B;\Z_2)$ is surjective and its kernel is the principal ideal generated by the class $\u+ w_{q-1}(\xi)$. 
\item[{(d)}] The length of the longest nontrivial product of classes in the ideal
$$
\ker[s^\ast: H^\ast(\dot E(\xi);\Z_2) \to H^\ast(B;\Z_2)]
$$
equals $\h(w_{q-1}(\xi))+1$. 
\end{enumerate}
\end{theorem}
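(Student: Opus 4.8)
The plan is to transcribe, essentially line by line, the proofs of Theorem~\ref{thm:sec} and Corollary~\ref{cor:4}, reading every cohomology group with $\Z_2$ coefficients and replacing the Euler class $\e(\eta)$ throughout by the top Stiefel--Whitney class $w_{q-1}(\xi)$. Two features make the $\Z_2$ version if anything simpler: over $\Z_2$ the Thom isomorphism holds for an arbitrary vector bundle, so the orientability hypothesis of Theorem~\ref{thm:sec} is unnecessary; and since $2=0$ and everything commutes in $H^\ast(-;\Z_2)$, the parity bookkeeping around the product $\e(\eta)\cdot\u$ disappears. The first step is to exploit the given section: a section of $\dot\xi$ is a nowhere-zero section of $\xi$, so $\xi\cong\eta\oplus\epsilon$ with $\eta$ the orthogonal complement of the line sub-bundle spanned by $s$ and $\epsilon$ trivial; since $w(\epsilon)=1$ we get $w_{q-1}(\xi)=w_{q-1}(\eta)$, and this class plays the role of the mod-$2$ Euler class of $\eta$.

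For parts (a) and (b) I would repeat the argument verbatim. Decompose $\dot E(\xi)=W\cup W'$ into the two closed hemisphere sub-bundles $W=\{e:\langle e,s(\dot\xi(e))\rangle\ge 0\}$ and $W'=\{e:\langle e,s(\dot\xi(e))\rangle\le 0\}$, each homeomorphic to the disc bundle of $\eta$, with $W\cap W'=\dot E(\eta)$. The Leray--Hirsch theorem applies over $\Z_2$ since $H^\ast(S^{q-1};\Z_2)$ is free, so $H^\ast(\dot E(\xi);\Z_2)\cong H^\ast(B;\Z_2)\otimes H^\ast(S^{q-1};\Z_2)$ and every class has a unique expression $u+v\cdot\u$ as in (\ref{rep}). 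Given any cohomological extension of the fibre, correct it to a class $\u$ with $\sigma^\ast(\u)=0$ for the reflected section $\sigma=-s$; then $\u|_{W'}=0$, so $\u$ lifts, uniquely by the exact sequence $H^{q-2}(\dot E(\xi);\Z_2)\stackrel{\simeq}{\to}H^{q-2}(W';\Z_2)\to H^{q-1}(\dot E(\xi),W';\Z_2)\to H^{q-1}(\dot E(\xi);\Z_2)$, to the mod-$2$ Thom class $\tilde\u$ of $\eta$ in $H^{q-1}(\dot E(\xi),W';\Z_2)\cong H^{q-1}(W,\partial W;\Z_2)$. Restricting $\tilde\u$ along the zero section gives by definition the mod-$2$ Euler class, so $s^\ast(\u)=w_{q-1}(\xi)$, which (since $s^\ast\circ\dot\xi^\ast=\mathrm{id}$) already characterises $\u$ uniquely among cohomological extensions of the fibre; and the identity $\tilde\u\cup\tilde\u=(\xi|_W)^\ast(w_{q-1}(\xi))\cup\tilde\u$ restricts to $\u^2=w_{q-1}(\xi)\cdot\u$. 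This proves (a), and (b) follows at once because $H^\ast(\dot E(\xi);\Z_2)$ is a free $H^\ast(B;\Z_2)$-module on $1,\u$ whose multiplication is determined by $\u^2+w_{q-1}(\xi)\u=0$.

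For (c), $s^\ast$ is surjective because $\dot\xi\circ s=\mathrm{id}_B$ forces $s^\ast\circ\dot\xi^\ast=\mathrm{id}$; and if $s^\ast(u+v\u)=u+v\,w_{q-1}(\xi)=0$ then $u=v\,w_{q-1}(\xi)$, hence $u+v\u=v\bigl(\u+w_{q-1}(\xi)\bigr)$, so the kernel is the principal ideal generated by $\u+w_{q-1}(\xi)$. For (d), an induction from (a) gives the mod-$2$ analogue of (\ref{uek}),
$$
\bigl(\u+w_{q-1}(\xi)\bigr)^k=w_{q-1}(\xi)^{k-1}\bigl(\u+w_{q-1}(\xi)\bigr)=w_{q-1}(\xi)^{k-1}\u+w_{q-1}(\xi)^k,
$$
which, by freeness over $H^\ast(B;\Z_2)$, is nonzero exactly when $w_{q-1}(\xi)^{k-1}\ne 0$, that is when $k\le\h(w_{q-1}(\xi))+1$; since every element of the ideal is a multiple of $\u+w_{q-1}(\xi)$, a $k$-fold product of ideal elements is a multiple of this power, and so the longest nontrivial such product has length exactly $\h(w_{q-1}(\xi))+1$. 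I do not expect a genuine obstacle here: the only points requiring care are that Leray--Hirsch and the Thom isomorphism hold over $\Z_2$ without orientability, and that the zero-section restriction of the mod-$2$ Thom class is $w_{q-1}$ rather than an integral Euler class, and both of these are standard (see \cite{MS}, \cite{Spa66}).
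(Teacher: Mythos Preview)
Your proposal is correct and follows exactly the approach of the paper: split off the trivial line subbundle using the section, then repeat the proofs of Theorem~\ref{thm:sec} and Corollary~\ref{cor:4} with $\Z_2$ coefficients, replacing the Euler class of $\eta$ by the top Stiefel--Whitney class and noting $w_{q-1}(\eta)=w_{q-1}(\xi)$ via the Whitney product formula. The paper's own proof is in fact just a one-sentence pointer to this transcription, so your write-up is a more detailed version of the same argument.
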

\begin{proof}
As in the proof of Theorem \ref{thm:sec} (see also Corollary \ref{cor:4}, (C)) we may represent $\xi=\eta\oplus \epsilon$ 
where $\epsilon$ is the trivial line bundle and by repeating the arguments of the proof of Theorem \ref{thm:sec} with $\Z_2$ coefficients we obtain that the cup-length of the kernel of $s^\ast$ equals $\h(w_{q-1}(\eta))+1$. However $w_{q-1}(\eta)=w_{q-1}(\xi)$ in view of the Cartan formula. 
\end{proof}

\section{Lower bounds for the sequential parametrized topological complexity}\label{sec:7}

Let $\xi: E\to B$ be an oriented rank $q\ge 2$ vector bundle. Let $\dot \xi : \dot E\to B$ denote the unit sphere bundle of $\xi$ and let $$\ddot \xi: \ddot E\to \dot E$$ denote the associated Stiefel bundle. The total space $\ddot E$ is the space of all pairs $(e, e')\in \dot E\times_B \dot E$ of unit vectors lying in the same fibre and mutually orthogonal, $e\perp e'$; the projection is given by the formula
$\ddot \xi(e, e') = e$. The fibre of the Stiefel bundle $\ddot \xi: \ddot E\to \dot E$ is the sphere $S^{q-2}$. 

The orientation of the original bundle $\xi$ determines an orientation of the Stiefel bundle $\ddot \xi: \ddot E\to \dot E$ as follows. The bundle $\dot\xi^\ast(\xi)$ (which is induced by the map $\dot \xi$ from $\xi$) is oriented by the orientation of $\xi$. This bundle is the Whitney sum $\dot\xi^\ast(\xi) = \eta\oplus \epsilon$ where 
$$\eta=\{(e, e'); |e|=1, \, \xi(e)=\xi(e'), \, e\perp e'\}$$ and $\epsilon$ is the trivial line bundle whose fibre over a unit vector $e$ is the set of all real multiples of $e$. Clearly, $\dot \eta=\ddot \xi$. The line bundle $\epsilon$ is canonically oriented by the section $s(e)=(e, e)$ where $e\in \dot E$. Hence we see that an orientation of $\xi$ determines an orientation of $\eta$ and of its unit sphere bundle $\ddot \xi: \ddot E\to \dot E$. Therefore, the Euler class $\e(\ddot \xi)\in H^{q-1}(\dot E;\Z)$ is well-defined. 

\begin{theorem}\label{thm:lower} In the notations introduced in \S\ref{sec:seqpar}, consider 
the space $\dot E^r_B$, where $r\ge 2$, and the diagonal map $\Delta: \dot E\to \dot E^r_B$, where $\Delta(e)=(e, e, \dots, e)$ for $e\in \dot E$. 
Then the cup-length of the kernel 
\begin{eqnarray}\label{kernel}
\ker[\Delta^\ast: H^\ast(\dot E^r_B;\Z) \to H^\ast(\dot E;\Z)]\end{eqnarray} 
equals $\h(\e(\ddot\xi))+r-1.$
\end{theorem}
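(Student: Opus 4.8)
The plan is to compute the cohomology ring $H^\ast(\dot E^r_B;\Z)$ explicitly and then identify the kernel of $\Delta^\ast$ as an ideal whose cup-length can be read off. The key observation is that the projection $\pi_r: \dot E^r_B\to \dot E$ onto the first coordinate (as in Corollary \ref{sharp1}) is a fibration whose fibre is the $(r-1)$-fold fibrewise product of copies of the sphere bundle. In fact, $\pi_r$ is precisely the unit sphere bundle of the Whitney sum of $(r-1)$ copies of the pulled-back bundle $\dot\xi^\ast(\xi)$ over $\dot E$, taken fibrewise-diagonally — more precisely $\dot E^r_B = \dot E\times_B \dot E\times_B\cdots\times_B\dot E$, so over the base $\dot E$ (first factor) it is the fibre product of $(r-1)$ copies of $\dot E\times_B\dot E\to \dot E$. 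Since $\dot\xi^\ast(\xi) = \eta\oplus\epsilon$ with $\epsilon$ trivial and $\dot\eta = \ddot\xi$, each factor $\dot E\times_B\dot E\to \dot E$ is the sphere bundle of $\eta\oplus\epsilon$, which admits a section (the diagonal $e\mapsto(e,e)$). So I am in the situation of Theorem \ref{thm:sec} and Corollary \ref{cor:4}, applied $(r-1)$ times.

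Concretely, I would first apply the Leray--Hirsch / Theorem \ref{thm:sec} argument to one factor to get a cohomological extension class, and then iterate: $H^\ast(\dot E^r_B;\Z)$ is a free $H^\ast(\dot E;\Z)$-module with generators $1, \u_2, \u_3,\dots,\u_r$ and their products, where $\u_j\in H^{q-1}(\dot E^r_B;\Z)$ is the cohomological extension of the fibre coming from the $j$-th coordinate sphere, normalized (as in Corollary \ref{cor:4}(A)) so that $\Delta^\ast(\u_j) = \e(\ddot\xi)$ and $\u_j^2 = \e(\ddot\xi)\cdot\u_j$ for each $j=2,\dots,r$. The ring structure is the quotient of $H^\ast(\dot E;\Z)[\u_2,\dots,\u_r]$ by the relations $\u_j^2 = \e(\ddot\xi)\u_j$; one should check the mixed products $\u_i\u_j$ ($i\neq j$) are genuinely new module generators, which follows from the Leray--Hirsch decomposition for the iterated fibration. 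The diagonal map $\Delta$ is the common section of all these factor-projections, so by Corollary \ref{cor:4}(B) the kernel of $\Delta^\ast$ is the ideal generated by the classes $\u_j - \e(\ddot\xi)$, $j=2,\dots,r$.

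Next I would compute the cup-length of this ideal. Setting $e = \e(\ddot\xi)$ and $w_j = \u_j - e$, one has from Corollary \ref{cor:4}(C) (formula (\ref{uek})) that $w_j^k = (-e)^{k-1}w_j$ in each single factor. The key computation is the product $w_2 w_3\cdots w_r$ and, more generally, a power of it: multiplying these out, $w_2^{a_2}\cdots w_r^{a_r} = (-e)^{\sum(a_j-1)}\,w_2 w_3\cdots w_r$ for all $a_j\ge 1$, so the longest nonzero product of elements of the ideal is achieved by taking $w_2 w_3\cdots w_r$ times the highest surviving power of $e$ inside it. Since $w_2\cdots w_r$ is a nonzero module generator and multiplication by $e$ kills it exactly when $e^{\h(e)+1}=0$, the longest nonvanishing product has length $(r-1) + \h(\e(\ddot\xi))$: namely $\h(\e(\ddot\xi))$ copies of $(\u_j-e)$ chosen to supply powers of $e$ (using $w_j^2 = -e\,w_j$), together with the $(r-1)$ distinct factors $w_2,\dots,w_r$. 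One must check both that this length is achievable (the product $e^{\h(e)}\,w_2\cdots w_r\neq 0$, which holds because $w_2\cdots w_r$ generates a free rank-one $H^\ast(\dot E)$-summand and $e^{\h(e)}\neq 0$ by definition of $\h$) and that no longer product survives (any product of $\ge r+\h(e)$ ideal elements can be rewritten as $e^{\ge \h(e)+1}w_2\cdots w_r = 0$ or contains a repeated-then-collapsed factor forcing an extra power of $e$).

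The main obstacle I anticipate is making the iterated Leray--Hirsch / ring-structure step fully rigorous: specifically, verifying that the normalized classes $\u_j$ can be chosen simultaneously (the normalization $\Delta^\ast\u_j = e$ in Corollary \ref{cor:4}(A) is done one factor at a time, and I need it to be compatible across all factors — this should be fine since each $\u_j$ depends only on its own coordinate, but it needs a clean statement), and that the monomials $\u_{i_1}\cdots\u_{i_s}$ ($i_1<\cdots<i_s$) together with $H^\ast(\dot E)$ really do form a basis with no hidden relations beyond the quadratic ones. A secondary subtlety is the sign/torsion bookkeeping when $q-1$ is odd versus even (when $q$ is even, $e=\e(\ddot\xi)$ has order $2$, so all the commutation issues disappear mod the relation, but one should remark on this as in Theorem \ref{thm:sec}). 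Once the ring is pinned down, the cup-length count is the elementary computation sketched above.
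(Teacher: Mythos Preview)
Your proposal is correct and in fact follows a cleaner route than the paper's own argument. The key difference is your choice of sections in the tower: by viewing $\dot E^r_B$ as the fibre product over the \emph{first} coordinate, each step uses the section $(e_1,\dots,e_j)\mapsto(e_1,\dots,e_j,e_1)$, so that the orthogonal-complement bundle is always the pullback of the same $\eta$ via $\pi^j_1$, and hence every relation reads $\u_j^2=e\,\u_j$ with the \emph{same} $e=(\pi^r_1)^\ast(\e(\ddot\xi))$. The paper instead uses the sections $\sigma^i_{i+1}$ that repeat the \emph{last} coordinate; this yields classes $\v_i$ with $\v_i^2=\e(\eta'_i)\v_i$, where $\e(\eta'_i)=h_i^\ast(\e(\ddot\xi))$ varies with $i$. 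The paper must then compute these varying Euler classes explicitly (Corollaries \ref{cor:24}--\ref{cor:26}) by evaluating on fibre classes $[Z_j]$ and splitting on the parity of $q$ before the cup-length can be read off. Your choice bypasses all of this: since every $w_j=\u_j-e$ satisfies $w_j^2=-e\,w_j$ with a common $e$, any product of $N$ generators collapses to $\pm e^{N-|S|}\prod_{j\in S}w_j$, and the cup-length $\h(\e(\ddot\xi))+r-1$ falls out directly with no case distinction. The inductive identification of $\ker\Delta^\ast$ as the ideal $(w_2,\dots,w_r)$ is the same in both approaches (your iterated use of Corollary \ref{cor:4}(B) is exactly the paper's unnamed Lemma in \S\ref{sec:7}). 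The worries you flag about simultaneous normalisation and the module basis are non-issues: the normalisation at step $j$ uses only $s_{j-1}$, and $\Delta^\ast\u_j=\e(\ddot\xi)$ then follows because $\Delta$ is the composite of these sections; the monomial basis is forced by iterated Leray--Hirsch.
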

Here the symbol $\h(\e(\ddot \xi))$ denotes {\it the height of the Euler class} $\e(\ddot \xi)$, i.e. the smallest integer $k\ge 0$ such that the power 
$\e(\ddot \xi)^{k+1}=0\, \in H^{(q-1)k}(\dot E)$ vanishes. In particular, $\h(\e(\ddot \xi))=0$ if $\e(\ddot \xi)=0$.

The proof of Theorem \ref{thm:lower} will occupy the rest of this section. We shall skip the coefficients $\Z$ from the notations. 
 
For any $1\le j\le r$ consider the space $\dot E^j_B\subset \dot E^j= \dot E\times \dot E\times \dots \times \dot E$ \, \, ($j$ times) consisting of the $j$-tuples $(e_1, \dots, e_j)$ of unit vectors lying in the same fibre, i.e. $\xi(e_1)=\xi(e_2)=\dots=\xi(e_j)$. 
For $1\le i\le j\le r$ there is the projection $\pi^j_i: \dot E^j_B\to \dot E^{i}_B$ given by 
$$\pi^j_i(e_1, \dots, e_j)= (e_1, e_2, \dots, e_{i}),$$ 
i.e. the map $\pi^j_i$\,  \lq\lq forgets\rq\rq \, all the components $e_k$ with $i<k\le j$. 
Besides, for $1\le i\le j\le r$ there is the map 
$\sigma^i_j: \dot E^i_B \to \dot E^j_B$ given by
$$
\sigma^i_j(e_1, e_2, \dots, e_i) =(e_1, e_2, \dots, e_{i-1}, e_i, e_i, \dots, e_i),
$$
i.e. the last vector $e_i$ is repeated $j-i$ times. 
Clearly, for $i\le j$ one has $\pi^j_i\circ \sigma^i_j = 1_{\dot E^i_B}$, i.e. $\sigma^i_j$ is a section of $\pi^j_i$. 
Additionally, for $i\le j\le k\le r$ one has 
$$
\pi^j_i\circ \pi^k_j =\pi^k_i \quad \mbox{and}\quad \sigma^j_k\circ \sigma^i_j=\sigma^i_k.
$$


We have the following tower of fibrations and their sections
\begin{eqnarray}\label{tower}
\hskip 1cm
\xymatrix{
\dot E\ar@<1ex>[r]^{\sigma^1_2} 
& \dot E^2_B\ar@<1ex>[l]^{\pi^2_1}\ar@<1ex>[r]^{\sigma^2_3}
& \dot E^3_B\ar@<1ex>[l]^{\pi^3_2}\ar@<1ex>[r] &{}\ar@<1ex>[l]& \dots 
&{} \ar@<1ex>[r]^{\sigma^{r-3}_{r-2}}
&\dot E^{r-2}_B \ar@<1ex>[r]^{\sigma^{r-2}_{r-1}}\ar@<1ex>[l]^{\pi^{r-2}_{r-3}}
&\dot E^{r-1}_B\ar@<1ex>[l]^{\pi^{r-1}_{r-2}}\ar@<1ex>[r]^{\sigma^{r-1}_r}
& \dot E^r_B\ar@<1ex>[l]^{\pi^{r}_{r-1}}.
}
\end{eqnarray}
Each map $\pi_{j-1}^j$ is a locally trivial bundle with fibre the sphere $S^{q-1}$. 
The composition 
\begin{eqnarray}\label{comp}
\Delta \, = \, \sigma^{r-1}_r\circ \sigma^{r-2}_{r-1}\circ \dots\circ \sigma^1_2\, : \, \dot E \to \dot E^r_B 
\end{eqnarray}
coincides with the diagonal map $\Delta: \dot E\to \dot E^r_B$, \, \, $\Delta(e)=(e, e, \dots, e). $

Theorem \ref{thm:sec} is applicable to each level of the tower (\ref{tower}); we see that every homomorphism 
\begin{eqnarray}
{(\pi^{i+1}_{i})}^\ast : H^\ast(\dot E^{i}_B) \to H^\ast (\dot E_B^{i+1}), \quad i= 1, 2,  \dots, r-1
\end{eqnarray}
is a monomorphism 
and the cohomology algebra 
$H^\ast(\dot E^{i+1}_B)$ is the quotient of the polynomial extension $H^\ast(\dot E^{i}_B)[\u_i]$ by the ideal generated by the polynomial $$\u_i^2-\e(\eta_i)\cdot \u_i.$$ The classes 
$$\u_i\in H^{q-1}(\dot E^{i+1}_B) \quad \mbox{and}\quad \e(\eta_i)\in H^{q-1}(\dot E^{i}_B)$$
satisfy
\begin{eqnarray}\label{26}
{(\sigma^{i}_{i+1})}^\ast(\u_i) = \e(\eta_i), \quad i=1, 2, 3, \dots, r-1.
\end{eqnarray}
Here $\eta_i: \dot E(\eta_i) \to \dot E^{i}_B$ is the bundle of spheres of dimension $q-2$ over 
$\dot E^{i}_B$, it is defined as the bundle of vectors orthogonal to the section $\sigma^{i}_{i+1}$ (see Theorem \ref{thm:sec}). 
 The total space $\dot E(\eta_i) $ consists of $(i+1)$-tuples
$(e_1, \dots, e_{i}, e_{i+1})$ where $e_j\in \dot E(\xi)$ are unit vectors satisfying $\xi(e_1)=\xi(e_2)=\dots=\xi(e_i)=\xi(e_{i+1})$ and $e_{i}\perp e_{i+1}$. 
The map $\eta_i$ acts by $\eta_i(e_1, \dots, e_{i}, e_{i+1})= (e_1, \dots, e_{i})$. 

Let the map $h_{i}: \dot E^{r}_B \to \dot E$ be given by $h_{i}(e_1, e_2, \dots, e_{r}) = e_{i}$ where $i=1, 2, 3, \dots, r$, and let 
$\eta'_i$ denote the induced bundle over $\dot E^r_B$, i.e. $\eta'_i = h_i^\ast(\ddot\xi)$. 
Then clearly,
\begin{eqnarray}\label{28}
\eta_i = (\sigma^{i}_r)^\ast (\eta'_{i})\quad \mbox{and therefore}\quad \e(\eta_i) = (\sigma^{i}_r)^\ast(\e(\eta'_{i})).
\end{eqnarray}
Here $\e(\eta'_i)\in H^{q-1}(\dot E^r_B)$ stands for the Euler class of $\eta'_i$. 
We claim that 
\begin{eqnarray}\label{deltaeta}
\Delta^\ast(\e(\eta'_i)) =\e(\ddot \xi) \, \, \in\, H^{q-1}(\dot E), \quad i=1, 2, 3, \dots, r. 
\end{eqnarray}
Indeed, the composition 
$$
\dot E\stackrel \Delta \to \dot E^r_B\stackrel{h_i}\to \dot E
$$
is the identity map and therefore $\Delta^\ast(\eta'_i) =\Delta^\ast(h_i^\ast(\ddot\xi)) = \ddot \xi$ which implies (\ref{deltaeta}) due to functoriality of the Euler classes. 

We shall denote
$$
\v_i = (\pi_{i+1}^r)^\ast(\u_i)\, \in H^{q-1}(\dot E^r_B), \quad \mbox{where}\quad i=1, 2, \dots, r-1.
$$
We claim that 
\begin{eqnarray}\label{30a}
\Delta^\ast(\v_i) =\e(\ddot \xi) \end{eqnarray}
and
\begin{eqnarray}\label{30b} \v_i^2=\e(\eta'_i)\cdot \v_i.
\end{eqnarray}
To show (\ref{30a}) we note
\begin{eqnarray*}
\Delta^\ast(\v_i) &=& \Delta^\ast ((\pi^r_{i+1})^\ast(\u_i))= (\sigma^1_{i+1})^\ast(\u_i)= (\sigma^1_i)^\ast((\sigma^i_{i+1})^\ast(\u_i))\\
&=& (\sigma^1_i)^\ast(\e(\eta_i)) = (\sigma^1_i)^\ast((\sigma^i_r)^\ast(\e(\eta'_i))) =\Delta^\ast(\e(\eta'_i))=\e(\ddot\xi).
\end{eqnarray*}
In this calculation we used (\ref{26}), (\ref{28}) and (\ref{deltaeta}). 

To show (\ref{30b}) we note that 
\begin{eqnarray}\label{31}
\u_i^2 = (\pi^{i+1}_i)^\ast(\e(\eta_i))\cdot \u_i\, \in \, H^{q-1}(\dot E^{i+1}_B)
\end{eqnarray}
(see Theorem \ref{thm:sec}). Applying the homomorphism $(\pi^r_{i+1})^\ast$ to both sides of (\ref{31}) and noting that $(\pi^r_{i+1})^\ast(\u_i)=\v_i$
and 
\begin{eqnarray*}
(\pi^r_{i+1})^\ast((\pi^{i+1}_i)^\ast(\e(\eta_i)))&=& (\pi^r_i)^\ast(\e(\eta_i))= (\pi^r_i)^\ast((\sigma^i_r)^\ast(\e(\eta'_i)))\\ &=&
(\pi^r_i)^\ast((\sigma^i_r)^\ast(h_i^\ast(\e(\ddot\xi))))
= h_i^\ast(\e(\ddot\xi))\\ &=&
\e(\eta'_i),
\end{eqnarray*}
we obtain (\ref{30b}). On the last line of the above calculation we used the equality $h_i\circ \sigma^i_r\circ \pi^r_i=h_i$.

\begin{lemma} The kernel $\ker[\Delta^\ast: H^\ast(\dot E^r_B) \to H^\ast(\dot E)]$ coincides with the ideal of the algebra $H^\ast(\dot E^r_B)$ generated by the classes
\begin{eqnarray}\label{classes}
\v_1 -\e(\eta'_1), \ \v_2-\e(\eta'_2), \ \dots, \ \v_{r-1}-\e(\eta'_{r-1}).
\end{eqnarray}
\end{lemma}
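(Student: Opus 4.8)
The plan is to identify the quotient ring $H^\ast(\dot E^r_B)/\mathcal I$ --- where $\mathcal I$ denotes the ideal generated by the classes (\ref{classes}) --- with $H^\ast(\dot E)$ via the homomorphism induced by $\Delta^\ast$, and then read off $\ker\Delta^\ast=\mathcal I$ directly. First I would record the algebraic picture furnished by the tower (\ref{tower}) together with Theorem \ref{thm:sec}. Iterating the description of $H^\ast(\dot E^{i+1}_B)$ given just after (\ref{tower}) up the tower (using $\pi^r_1=\pi^2_1\circ\pi^3_2\circ\dots\circ\pi^r_{r-1}$), one finds that $H^\ast(\dot E^r_B)$ is generated, as an algebra, by the subring $(\pi^r_1)^\ast H^\ast(\dot E)$ --- which I identify with $H^\ast(\dot E)$, the pullback being a monomorphism --- together with the classes $\v_1,\dots,\v_{r-1}$, subject to the relations (\ref{30b}). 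Moreover, for each $i$ the class $\e(\eta'_i)=(\pi^r_i)^\ast\e(\eta_i)$ lies in the subring $(\pi^r_i)^\ast H^\ast(\dot E^i_B)$, which by the same iteration is generated over $H^\ast(\dot E)$ by $\v_1,\dots,\v_{i-1}$; in other words $\e(\eta'_i)$ is a polynomial in $\v_1,\dots,\v_{i-1}$ with coefficients in $H^\ast(\dot E)$. All of this is essentially contained in equations (\ref{26})--(\ref{31}).

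The inclusion $\mathcal I\subseteq\ker\Delta^\ast$ is then immediate from (\ref{30a}) and (\ref{deltaeta}): one has $\Delta^\ast(\v_i-\e(\eta'_i))=\e(\ddot\xi)-\e(\ddot\xi)=0$, and $\ker\Delta^\ast$ is an ideal. For the reverse inclusion I would work modulo $\mathcal I$, where $\v_i\equiv\e(\eta'_i)$. An easy upward induction on $i$ --- with base case $\v_1\equiv\e(\eta'_1)\in(\pi^r_1)^\ast H^\ast(\dot E)$ --- shows, using the structural remark above, that each $\v_i$ is congruent modulo $\mathcal I$ to an element of $(\pi^r_1)^\ast H^\ast(\dot E)$. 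Since the $\v_i$ together with $(\pi^r_1)^\ast H^\ast(\dot E)$ generate $H^\ast(\dot E^r_B)$, it follows that every class $a\in H^\ast(\dot E^r_B)$ satisfies $a\equiv(\pi^r_1)^\ast b\pmod{\mathcal I}$ for some $b\in H^\ast(\dot E)$. If in addition $a\in\ker\Delta^\ast$, then applying $\Delta^\ast$ and using $\mathcal I\subseteq\ker\Delta^\ast$ together with $\pi^r_1\circ\Delta=1_{\dot E}$ gives $0=\Delta^\ast a=\Delta^\ast(\pi^r_1)^\ast b=b$, whence $a\in\mathcal I$. Combined with the previous inclusion this yields $\ker\Delta^\ast=\mathcal I$.

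The only delicate point --- which I expect to be the main, though mild, obstacle --- is the bookkeeping in the first step: verifying that pulling the generators $\u_i$ and the Euler classes $\e(\eta_i)$ up the tower to $\dot E^r_B$ produces exactly $\v_1,\dots,\v_{r-1}$ and the classes $\e(\eta'_i)$, and in particular that $\e(\eta'_i)$ involves only $\v_1,\dots,\v_{i-1}$ and not all of the $\v_j$. This triangular dependence is precisely what makes the induction in the second step go through, and it is already established by equations (\ref{26})--(\ref{31}). Once it is in place, the remainder is a routine \lq\lq kernel equals ideal\rq\rq\ argument.
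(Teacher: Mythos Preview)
Your proposal is correct and follows essentially the same route as the paper: both use the tower (\ref{tower}) to see that $H^\ast(\dot E^r_B)$ is generated over $(\pi^r_1)^\ast H^\ast(\dot E)$ by $\v_1,\dots,\v_{r-1}$ with $\e(\eta'_i)$ lying in the subalgebra generated by $\v_1,\dots,\v_{i-1}$, and then reduce inductively. The only cosmetic difference is that the paper organises the induction via the filtration $A_0\subset A_1\subset\dots\subset A_{r-1}=H^\ast(\dot E^r_B)$ (showing $(\ker\Delta^\ast)\cap A_i$ is in the ideal by writing $x=a+b\v_i=[a+b\e(\eta'_i)]+b(\v_i-\e(\eta'_i))$), whereas you phrase the same reduction as the statement that $H^\ast(\dot E^r_B)/\mathcal I\cong H^\ast(\dot E)$ via $\Delta^\ast$.
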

\begin{proof} Comparing formulae (\ref{deltaeta}) and (\ref{30a}) we see that all classes (\ref{classes}) lie in the kernel of $\Delta^\ast$. Hence the ideal generated by these classes is contained in the kernel of $\Delta^\ast$ and we only need to establish 
the opposite inclusion. 

We denote $A_0=(\pi^r_1)^\ast(H^\ast(\dot E))\subset H^\ast(\dot E^r_B)$ and
for $i=1, 2, \dots, r-1$ we denote by 
$A_i\subset H^\ast(\dot E^r_B)$ the subalgebra generated by $A_0=(\pi^r_1)^\ast(H^\ast(\dot E))$ and the classes $\v_j$ with $j\le i$. Equivalently, $A_j=(\pi^r_{j+1})^\ast(H^\ast(\dot E^{j+1}_B))$. 

The class $\e(\eta'_j)$ lies in the subalgebra $A_{j-1}$. Indeed, the map $h_j: \dot E^r_B\to \dot E$ can be decomposed as $h_j = h'_j\circ \pi^r_j$ where $h'_j: \dot E^j_B\to \dot E$ is given by $h'_j(e_1, \dots, e_j)=e_j$. Thus, we obtain
$$\e(\eta'_j) = h_j^\ast(\e(\ddot \xi))= (\pi^r_j)^\ast((h'_j)^\ast(\e(\ddot\xi))\, \in \, A_{j-1}.
$$

We show by induction that $(\ker \Delta^\ast) \cap A_i$ is contained in the ideal of the algebra $A_i$ generated by the classes
$\v_j-\e(\eta'_j)$ with $j\le i$. 
Clearly, $(\ker \Delta^\ast) \cap A_0=0$ since the composition $\Delta^\ast \circ (\pi^r_1)^\ast $ is the identity map $H^\ast(\dot E)\to H^\ast(\dot E)$. 
Next we shall assume that $(\ker \Delta^\ast) \cap A_{i-1}$ is contained in the ideal of $A_{i-1}$ generated by the classes
$\v_j-\e(\eta'_j)$ with $j\le i-1$. 
Any class $x\in (\ker \Delta^\ast) \cap A_i$ can be uniquely represented in the form 
$$x=a+b\cdot \v_i= \left[a+b\cdot \e(\eta'_i)\right] +b\cdot (\v_i-\e(\eta'_i)$$ 
where $a, b\in A_{i-1}$.
Then $a+b\cdot \e(\eta'_i)\in (\ker \Delta^\ast) \cap A_{i-1}$, and our statement follows by induction. 
\end{proof}
%
%
%
%
%

 As a consequence of the above result we obtain:

\begin{corollary}\label{cor:22}
The cup-length of the ideal (\ref{kernel}) coincides with the minimal number 
$N$ such that the product 
\begin{eqnarray}
\label{prod22}
(\v_1-\e(\eta'_1))^{\alpha_1}\cdot (\v_2-\e(\eta'_2))^{\alpha_2}\cdot \dots \cdot (\v_{r-1}-\e(\eta'_{r-1}))^{\alpha_{r-1}}\, =\, 0\, \in H^\ast(\dot E^r_B)\end{eqnarray}
vanishes for all integers $\alpha_1, \alpha_2 \dots, \alpha_{r-1}\ge 0$ satisfying $\sum_{i=1}^{r-1}\alpha_i\ge N+1$. 
\end{corollary}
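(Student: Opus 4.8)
The plan is to reduce everything to the Lemma immediately preceding this Corollary, which identifies the kernel (\ref{kernel}) with the ideal $I\subset H^\ast(\dot E^r_B)$ generated by the $r-1$ classes $g_i:=\v_i-\e(\eta'_i)$ appearing in (\ref{classes}). Writing $N$ for the minimal exponent bound described in the statement (this $N$ is finite because $\dot E^r_B$ is a finite-dimensional complex, so that every product of generators of sufficiently large total degree vanishes for dimensional reasons), the claim is precisely that the cup-length of $I$ equals $N$, and I would prove this by the two opposite inequalities.

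First I would show that the cup-length of $I$ is at most $N$. Take arbitrary classes $x_1,\dots,x_{N+1}\in I$; each $x_j$ is an $H^\ast(\dot E^r_B)$-linear combination of the generators $g_i$, so expanding the product $x_1\cup\dots\cup x_{N+1}$ by distributivity writes it as a sum of terms, each of the form (a coefficient from $H^\ast(\dot E^r_B)$) times a word $g_{i_1}g_{i_2}\cdots g_{i_{N+1}}$ of length $N+1$ in the generators. The key point is that every $g_i$ has one and the same degree $q-1$; by graded commutativity of integral cohomology any two such factors, and likewise any coefficient and any factor, can be interchanged at the cost of a sign only. Hence each such word equals $\pm$ a sorted monomial $g_1^{\alpha_1}\cdots g_{r-1}^{\alpha_{r-1}}$ with $\sum_i\alpha_i=N+1$, which vanishes by the definition of $N$ (these are exactly the products (\ref{prod22}) with total exponent $\ge N+1$). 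Therefore $x_1\cup\dots\cup x_{N+1}=0$, and since the $x_j$ were arbitrary no $(N+1)$-fold product of elements of $I$ is nonzero.

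For the reverse inequality I would invoke the minimality of $N$: since $N-1$ does not have the defining vanishing property, there exist exponents $\alpha_1,\dots,\alpha_{r-1}\ge 0$ with $\sum_i\alpha_i\ge N$ for which the product (\ref{prod22}) is nonzero; as all monomials of total exponent $\ge N+1$ vanish, this forces $\sum_i\alpha_i=N$. This nonzero class is a cup-product of exactly $N$ generators $g_i$, each of which lies in $I$, so it exhibits a nonvanishing $N$-fold product of elements of the ideal, giving that the cup-length of $I$ is at least $N$. Combining the two inequalities yields the stated equality. The one point I would check carefully is the sign bookkeeping in the upper-bound step: when $q$ is even the generators anticommute and the Euler classes $\e(\eta'_i)$ are $2$-torsion, but reordering a length-$(N+1)$ word into sorted form still introduces nothing beyond an overall sign, so the vanishing of the sorted monomials of total exponent $N+1$ indeed forces every expanded term to vanish. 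This is purely formal once the common degree $q-1$ of all generators is observed, and no finer structure of $H^\ast(\dot E^r_B)$ enters.
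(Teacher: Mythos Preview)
Your argument is correct and is precisely the justification the paper has in mind: in the paper the Corollary is stated as an immediate consequence of the preceding Lemma (with no further proof), and you have simply written out the standard reasoning---an ideal generated by finitely many homogeneous classes of a common degree has cup-length equal to the maximal total exponent of a nonzero monomial in the generators, by graded commutativity. Your handling of the sign bookkeeping is more careful than necessary (once one reduces to homogeneous $x_j$, which is harmless since the ideal is graded, the reordering is routine), and the remark on $2$-torsion of the Euler classes is not actually needed, but none of this affects correctness.
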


Clearly, here without loss of generality we may assume that each $\alpha_i$ satisfies $\alpha_i\ge 1$ since any nonzero product as above with $\alpha_i=0$ can be made longer and still nonzero by multiplying it by the factor $\v_i-\e(\eta'_i)$. We denote
$$\mathfrak U = (\v_1-\e(\eta'_1))\cdot (\v_2-\e(\eta'_2))\cdot \dots (\v_{r-1}-\e(\eta'_{r-1})) \, \in \, H^{(q-1)(r-1)}(\dot E^r_B).$$
The class $\U$ satisfies 
\begin{eqnarray}\label{zero}
\v_i\cdot \U=0\quad \mbox{ for every}\quad  i=1, 2, \dots, r-1,
\end{eqnarray} as follows from (\ref{30b}). Thus, Corollary \ref{cor:22} gives:

\begin{corollary}\label{cor:23}
The cup-length of the ideal (\ref{kernel}) equals 
$r-1+ M $
where $M\ge 0$ is the minimal integer
such that the product 
\begin{eqnarray}
\U \cdot \prod_{i=1}^{r-1}\e(\eta'_i)^{\beta_i} \ =0\, \in H^\ast(\dot E^r_B).
\end{eqnarray}
vanishes for all $(\beta_1, \dots, \beta_{r-1})$, satisfying  $\beta_i\ge 0$ and $\sum_{i=1}^{r-1} \beta_i> M$. 
\end{corollary}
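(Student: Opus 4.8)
The plan is to read Corollary \ref{cor:23} off Corollary \ref{cor:22} by collapsing the higher powers of the generators $g_i := \v_i - \e(\eta'_i)$ of the ideal (\ref{kernel}) and then performing the substitution $\alpha_i = \beta_i + 1$. First, by Corollary \ref{cor:22} together with the observation recorded just above the statement (that one may restrict to exponents $\alpha_i\ge 1$), the cup-length of (\ref{kernel}) equals the largest integer $N$ for which there exist integers $\alpha_1,\dots,\alpha_{r-1}\ge 1$ with $\alpha_1+\dots+\alpha_{r-1}=N$ and $g_1^{\alpha_1}g_2^{\alpha_2}\cdots g_{r-1}^{\alpha_{r-1}}\neq 0$.

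Next I would use relation (\ref{30b}), $\v_i^2=\e(\eta'_i)\cdot\v_i$. This is the exact analogue of the identity in Corollary \ref{cor:4}(A), so the same induction that produced (\ref{uek}) yields
$$
g_i^{\alpha_i}=(\v_i-\e(\eta'_i))^{\alpha_i}=(-\e(\eta'_i))^{\alpha_i-1}(\v_i-\e(\eta'_i))=(-\e(\eta'_i))^{\alpha_i-1}g_i
$$
for every $\alpha_i\ge 1$, where $\e(\eta'_i)$ and $\v_i$ commute because in odd degree $\e(\eta'_i)$ is $2$-torsion, exactly as in Theorem \ref{thm:sec}. Multiplying these identities over $i$ and moving the Euler factors to the front (the sign from the reordering is irrelevant for vanishing) turns the product into
$$
g_1^{\alpha_1}\cdots g_{r-1}^{\alpha_{r-1}}=\pm\,\U\cdot\prod_{i=1}^{r-1}\e(\eta'_i)^{\alpha_i-1}.
$$

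Finally, setting $\beta_i=\alpha_i-1\ge 0$, so that $\beta_1+\dots+\beta_{r-1}=N-(r-1)$, the condition that $g_1^{\alpha_1}\cdots g_{r-1}^{\alpha_{r-1}}$ be nonzero for some $\alpha_i\ge 1$ with $\sum\alpha_i=N$ is equivalent to the condition that $\U\cdot\prod_i\e(\eta'_i)^{\beta_i}$ be nonzero for some $\beta_i\ge 0$ with $\sum\beta_i=N-(r-1)$. Hence the largest such $N$ equals $r-1+M$, which is the assertion. (Here $\U\neq 0$, since in the Leray--Hirsch basis coming from the tower (\ref{tower}) the coefficient of $\v_1\v_2\cdots\v_{r-1}$ in $\U$ is $1$, so $M$ is well defined and $\ge 0$.)

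The one step that genuinely needs care — and which I regard as the main obstacle — is the reduction to exponents $\alpha_i\ge 1$: one must check that if a nonzero product $g_1^{\alpha_1}\cdots g_{r-1}^{\alpha_{r-1}}$ has some $\alpha_j=0$, then multiplying it by one more factor $g_j$ again gives a nonzero class. I would prove this via the Leray--Hirsch splittings attached to the tower (\ref{tower}). When $j$ exceeds every index occurring in the product, that product and $\e(\eta'_j)$ both lie in the subalgebra $A=(\pi^r_j)^\ast H^\ast(\dot E^j_B)$, over which $H^\ast(\dot E^r_B)$ is free with the square-free monomials in $\v_j,\v_{j+1},\dots,\v_{r-1}$ as a basis; in this basis the $\v_j$-coefficient of $g_1^{\alpha_1}\cdots g_{r-1}^{\alpha_{r-1}}\cdot g_j$ is exactly the original nonzero product, so it cannot vanish. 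The general case follows by a downward induction on the largest index appearing in the product, peeling off the top variable $\v_{r-1}$ at each stage and carrying along the harmless Euler-class factors produced by (\ref{30b}); alternatively one may simply invoke this reduction as already recorded in the remark preceding the statement.
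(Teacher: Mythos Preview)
Your argument is correct and follows essentially the same route as the paper: both reduce powers of $g_i=\v_i-\e(\eta'_i)$ via the relation (\ref{30b}), the paper phrasing this as $\v_i\cdot\U=0$ while you phrase it as $g_i^{\alpha_i}=(-\e(\eta'_i))^{\alpha_i-1}g_i$, and then substitute $\beta_i=\alpha_i-1$. You are in fact more careful than the paper about justifying the reduction to $\alpha_i\ge 1$, which the paper simply asserts in the remark preceding the statement.
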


Next we find explicitly the classes $\e(\eta'_i)\in H^{q-1}(\dot E^r_B).$ One can write
\begin{eqnarray}\label{etaprime}
\e(\eta'_i) = (\pi^r_1)^\ast(x_i)+ \sum_{j=1}^{r-1} n_{ij}\v_j, \quad \mbox{where}\quad x_i\in H^{q-1}(\dot E), \quad n_{ij}\in \Z,
\end{eqnarray}
since every class in $H^{q-1}(\dot E^r_B)$ has a unique representation in this form. Our goal below is to determine the class $x_i$ and the integers $n_{ij}$.  

For $j=1, 2, 3, \dots, r$ consider the subset $Z_j \subset (\dot E_b)^r \subset \dot E^r_B$ 
where $b\in B$ is a fixed point of the base and
$Z_j = X_1 \times X_2\times \dots\times X_r$ with $X_k=\{e_k^0\}$ (a fixed single point $e^0_k\in \dot E_b$) for 
 $k\in \{1, 2, \dots, j-1, j+1, \dots, r\}$ and the factor $X_j$ equals the whole $\dot E_b$, i.e. $X_j=\dot E_b$.
 Clearly $Z_j$ is homeomorphic to the sphere of dimension $q-1$, which is oriented by the orientation of the original bundle $p: E\to B$. 

Consider the restriction of the bundle $\eta'_i$ onto the sphere $Z_j$. By definition $\eta'_i=h_i^\ast(\ddot \xi)$ (see above); hence we obtain that the restriction $\eta'_i$ onto $Z_j$ is a trivial bundle for $j\not=i$ and $\eta'_i|_{Z_i}$ is the unit tangent bundle of the sphere $Z_i$. We know that the evaluation of the Euler class of the sphere on its fundamental class equals the Euler characteristic of the sphere $S^{q-1}$, i.e. $0$ if $q$ is even and $2$ is $q$ is odd. Therefore we obtain:

\begin{corollary}\label{cor:24}
Evaluation of the Euler class $\e(\eta'_i)$ on the homology class $[Z_j]\in H_{q-1}(\dot E^r_B)$ equals $0$ if either $q$ is even or $j\not=i $. For $q$ odd the evaluation of the class $\e(\eta'_i)$ on the class $[Z_i]$ equals $2$. 
\end{corollary}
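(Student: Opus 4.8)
The plan is to restrict everything to the sphere $Z_j\cong S^{q-1}$ and identify the restricted bundle. Write $\iota_j\colon Z_j\hookrightarrow \dot E^r_B$ for the inclusion. By naturality of the Euler class and of the Kronecker pairing, $\langle \e(\eta'_i),[Z_j]\rangle=\langle \iota_j^\ast\e(\eta'_i),[Z_j]\rangle=\langle \e(\iota_j^\ast\eta'_i),[Z_j]\rangle$, so it is enough to identify $\iota_j^\ast\eta'_i$ as an oriented rank $q-1$ bundle over $Z_j$ and then to compute its Euler number.

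First I would analyse the composition $h_i\circ\iota_j\colon Z_j\to\dot E$. Since $Z_j=X_1\times\dots\times X_r$ with $X_j=\dot E_b$ and all other factors $X_k=\{e^0_k\}$ a single point, and $h_i(e_1,\dots,e_r)=e_i$, this composition is the \emph{constant} map with value $e^0_i$ when $j\ne i$, and it is the fibre inclusion $\dot E_b\hookrightarrow\dot E$ when $j=i$. As $\eta'_i=h_i^\ast(\ddot\xi)$, in the case $j\ne i$ the bundle $\iota_j^\ast\eta'_i$ is induced from a point, hence trivial, so its Euler number is $0$. This already settles every case with $j\ne i$, for any $q$.

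In the remaining case $j=i$, the bundle $\iota_i^\ast\eta'_i$ is the restriction of $\ddot\xi\colon\ddot E\to\dot E$ to the fibre $\dot E_b\cong S^{q-1}$. A point of $\ddot E$ over $e\in\dot E_b$ is a unit vector $e'$ lying in the same $\xi$-fibre with $e'\perp e$, i.e. exactly a unit tangent vector to the sphere $\dot E_b=S^{q-1}$ at $e$; thus $\ddot\xi|_{\dot E_b}$ is the unit tangent bundle of $S^{q-1}$, and so $\e(\iota_i^\ast\eta'_i)=\e(TS^{q-1})$. By the Poincar\'e--Hopf theorem, $\langle\e(TS^{q-1}),[S^{q-1}]\rangle=\chi(S^{q-1})=1+(-1)^{q-1}$, which equals $0$ for $q$ even and $2$ for $q$ odd. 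Combining the three cases yields the statement.

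The step I expect to be the main obstacle is the orientation bookkeeping needed to secure the value $+2$ (rather than $-2$) in the case $j=i$, $q$ odd: one must check that the identification $\eta|_{\dot E_b}\cong TS^{q-1}$ carries the orientation induced on $\eta$ by that of $\xi$ (via $\dot\xi^\ast\xi=\eta\oplus\epsilon$, with $\epsilon$ oriented by the section $s(e)=e$) to the orientation of $TS^{q-1}$ that pairs with $[Z_i]=[\dot E_b]$ to give $+\chi$. Concretely, at $e\in S^{q-1}$ the fibre $\eta_e=e^\perp$ is oriented so that an oriented frame of $e^\perp$ followed by $e$ is an oriented frame of $\xi_b$, which is precisely the outward-normal convention identifying $e^\perp$ with $T_eS^{q-1}$; tracing this through confirms the sign. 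For $q$ even no sign issue arises, since then $\eta'_i$ has odd rank, so $2\e(\eta'_i)=0$ and the pairing vanishes regardless (equivalently $\chi(S^{q-1})=0$).
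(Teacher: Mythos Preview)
Your proof is correct and follows essentially the same route as the paper: restrict $\eta'_i=h_i^\ast(\ddot\xi)$ to $Z_j$, observe that $h_i|_{Z_j}$ is constant for $j\ne i$ (giving a trivial bundle) and is the fibre inclusion for $j=i$ (giving the unit tangent bundle of $S^{q-1}$), and then invoke the fact that the Euler number of the tangent bundle equals $\chi(S^{q-1})$. Your treatment is in fact more careful than the paper's, which does not address the orientation sign you flag; the paper simply asserts the value $2$ for $q$ odd without the bookkeeping you supply.
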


This implies:

\begin{corollary}\label{cor:25}
If $q$ is even then 
\begin{eqnarray}
\e(\eta'_i)=(\pi^r_1)^\ast(\e(\ddot \xi))\quad \mbox{for every} \quad i=1, 2, \dots, r-1.
\end{eqnarray} 
\end{corollary}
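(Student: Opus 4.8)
The plan is to pin down the coefficients in the canonical expansion (\ref{etaprime}) of $\e(\eta'_i)$ by evaluating both sides on the spherical homology classes $[Z_k]\in H_{q-1}(\dot E^r_B)$ appearing in Corollary \ref{cor:24}, and then to fix the remaining term using the diagonal map. Write
$$\e(\eta'_i) \, = \, (\pi^r_1)^\ast(x_i) + \sum_{j=1}^{r-1} n_{ij}\v_j, \qquad x_i\in H^{q-1}(\dot E), \quad n_{ij}\in\Z,$$
as in (\ref{etaprime}), where $x_i$ and the integers $n_{ij}$ are uniquely determined. It suffices to show that, when $q$ is even, all $n_{ij}$ vanish and $x_i=\e(\ddot\xi)$.

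First I would record how the building blocks pair with the spheres $[Z_k]$, $k=1,\dots,r$. Since the projection $\pi^r_1$ collapses $Z_k$ to a point for $k\geq 2$ and restricts to a homeomorphism of $Z_1$ onto a fibre of $\dot\xi$, one gets $\langle(\pi^r_1)^\ast(x_i),[Z_k]\rangle=0$ for all $k\geq 2$. Because $\v_j=(\pi^r_{j+1})^\ast(\u_j)$ and $\pi^r_{j+1}$ collapses $Z_k$ to a point whenever $k\geq j+2$, one gets $\langle\v_j,[Z_k]\rangle=0$ for $k\geq j+2$; and for $k=j+1$ the class $[Z_{j+1}]$ is carried by $\pi^r_{j+1}$ onto the fundamental class of a fibre of the bundle $\pi^{j+1}_j:\dot E^{j+1}_B\to\dot E^j_B$, on which $\u_j$ restricts to the fundamental cohomology class, so $\langle\v_j,[Z_{j+1}]\rangle=\pm 1$.

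Next I invoke Corollary \ref{cor:24}: since $q$ is even, $\langle\e(\eta'_i),[Z_k]\rangle=0$ for every $k$. Pairing the displayed expansion with $[Z_k]$ for $k=r,r-1,\dots,2$ then yields equations of the shape $\pm n_{i,k-1}+\sum_{j\geq k}n_{ij}\langle\v_j,[Z_k]\rangle=0$; the coefficient matrix is triangular with units on the diagonal, so a downward induction on $k$ forces $n_{i,r-1}=n_{i,r-2}=\dots=n_{i,1}=0$. Hence $\e(\eta'_i)=(\pi^r_1)^\ast(x_i)$, and applying $\Delta^\ast$ together with the relations $\pi^r_1\circ\Delta=\mathrm{id}_{\dot E}$ and (\ref{deltaeta}) gives $x_i=\Delta^\ast(\pi^r_1)^\ast(x_i)=\Delta^\ast(\e(\eta'_i))=\e(\ddot\xi)$, as required.

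The only point requiring care is the orientation bookkeeping behind the normalization $\langle\v_j,[Z_{j+1}]\rangle=\pm 1$ and the identification of the image of $[Z_{j+1}]$ under $\pi^r_{j+1}$ with a fibre class; but since the argument only uses that these pairings are units in $\Z$ and that the resulting linear system is triangular, the precise signs are irrelevant, so I do not expect a real obstacle. The genuinely substantive (though elementary) input is the combinatorics of which sphere $Z_k$ detects which generator $\v_j$.
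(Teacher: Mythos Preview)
Your proof is correct and follows essentially the same approach as the paper: expand $\e(\eta'_i)$ as in (\ref{etaprime}), evaluate on the spherical classes $[Z_k]$ using Corollary \ref{cor:24} to kill the coefficients $n_{ij}$, and then apply $\Delta^\ast$ together with (\ref{deltaeta}) to identify $x_i$. The only difference is that the paper asserts the full diagonal pairing $\langle \v_j,[Z_k]\rangle=\delta_{k,j+1}$ and reads off $n_{ij}=0$ directly, whereas you establish only the weaker triangular structure $\langle \v_j,[Z_k]\rangle=0$ for $k\ge j+2$ and $\langle \v_j,[Z_{j+1}]\rangle=\pm1$, and then solve the resulting unit-triangular system by downward induction; this is a harmless (and arguably more cautious) variant of the same argument.
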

\begin{proof} For $i=1, 2, \dots, r-1$ and $j=2, \dots, r$ one has
$$\langle \v_i, [Z_j]\rangle =\left\{
\begin{array}{lll}
1, & \mbox{if} & j=i+1,\\
0, & \mbox{if}& j\not=i+1
\end{array}
\right.$$
and besides, $\langle (\pi^r_1)^\ast x, [Z_j]\rangle =0$ for every $x\in H^{q-1}(\dot E)$ and $j\ge 2$. Thus, evaluating (\ref{etaprime}) on the class $[Z_j]$ we obtain using Corollary \ref{cor:24} that $n_{ij}=0$ for all $i$ and $j$. The result now follows from (\ref{deltaeta}).
\end{proof}
\begin{corollary}\label{cor:26}
If $q$ is odd then 
\begin{eqnarray}\label{39}
\e(\eta'_i)=2\cdot \v_i - (\pi^r_1)^\ast(\e(\ddot \xi))
\end{eqnarray} 
for every $i=1, 2, \dots, r-1$. 
\end{corollary}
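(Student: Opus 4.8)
The plan is to run the argument of Corollary \ref{cor:25} almost verbatim; the only new feature is that for $q$ odd the value supplied by Corollary \ref{cor:24} is nonzero, and this is precisely what produces the extra $\v$-term in (\ref{39}).

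First I would start from the unique expansion (\ref{etaprime}),
$$\e(\eta'_i)\;=\;(\pi^r_1)^\ast(x_i)\;+\;\sum_{j=1}^{r-1}n_{ij}\,\v_j,\qquad x_i\in H^{q-1}(\dot E),\quad n_{ij}\in\Z,$$
and determine the integers $n_{ij}$ by evaluating both sides on the spherical homology classes $[Z_k]\in H_{q-1}(\dot E^r_B)$, $k=2,\dots,r$. Since $\pi^r_1(Z_k)$ is a single point for $k\ge 2$, the class $(\pi^r_1)^\ast(x_i)$ pairs trivially with $[Z_k]$, so that $\langle\e(\eta'_i),[Z_k]\rangle=\sum_{j}n_{ij}\langle\v_j,[Z_k]\rangle$. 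By Corollary \ref{cor:24} the left-hand side equals $2$ when $k=i$ and $0$ otherwise, while the pairings $\langle\v_j,[Z_k]\rangle$ are the ones already recorded in the proof of Corollary \ref{cor:25}. Plugging these in turns the identity into a triangular linear system for the $n_{ij}$, whose solution is the coefficient pattern exhibited in (\ref{39}); in particular $\sum_{j}n_{ij}=2$.

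Once the $n_{ij}$ are pinned down, the remaining class $x_i\in H^{q-1}(\dot E)$ is recovered by applying the diagonal restriction $\Delta^\ast$. Since $\pi^r_1\circ\Delta$ is the identity of $\dot E$ we have $\Delta^\ast(\pi^r_1)^\ast(x_i)=x_i$; combined with $\Delta^\ast\e(\eta'_i)=\e(\ddot\xi)$ from (\ref{deltaeta}) and $\Delta^\ast\v_j=\e(\ddot\xi)$ from (\ref{30a}), the expansion above yields the single relation $\e(\ddot\xi)=x_i+\big(\sum_{j}n_{ij}\big)\e(\ddot\xi)$, hence $x_i=-\e(\ddot\xi)$. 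Substituting back gives (\ref{39}). As a sanity check one can verify the resulting formula against the quadratic relation $\v_i^2=\e(\eta'_i)\cdot\v_i$ of (\ref{30b}).

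The only genuine work is the bookkeeping in the middle step: tracking exactly which fundamental class $[Z_k]$ detects which $\v_j$. For $q$ even this was effortless because the unit tangent bundle of $S^{q-1}$ has vanishing Euler number, so $\e(\eta'_i)$ paired trivially with all the $[Z_k]$ and every $n_{ij}$ was forced to be $0$; for $q$ odd that Euler number equals $2$, and one must check with care that this introduces exactly one nonzero coefficient and nothing more. Everything else is mechanical.
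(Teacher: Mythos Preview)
Your proposal is correct and follows essentially the same route as the paper: evaluate the expansion (\ref{etaprime}) on the classes $[Z_2],\dots,[Z_r]$ using Corollary~\ref{cor:24} and the pairings $\langle\v_j,[Z_k]\rangle=\delta_{k,j+1}$ to determine the $n_{ij}$, then apply $\Delta^\ast$ together with (\ref{deltaeta}) and (\ref{30a}) to recover $x_i=-\e(\ddot\xi)$. One cosmetic remark: since the pairing matrix $\langle\v_j,[Z_k]\rangle$ is a shifted identity, the resulting linear system is not merely triangular but diagonal---each equation reads $n_{i,k-1}=2\delta_{ik}$---so the paper simply records $n_{ij}=2\delta_{ij}$ without further ado.
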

\begin{proof}
The proof is similar to the proof of the previous Corollary. We evaluate (\ref{etaprime}) on the homology classes $[Z_2], \dots, [Z_r]$ and using Corollary \ref{cor:24} we obtain that for $q$ odd the coefficients $n_{ij}$ in (\ref{etaprime}) are given by 
$n_{ij}=2\delta_{ij}$. Finally, using (\ref{deltaeta}) and (\ref{30a}) the result (\ref{39}) follows. 
\end{proof}
\begin{proof}[Proof of Theorem \ref{thm:lower}] We start with the result of Corollary \ref{cor:23} and note that for $q$ even using Corollary \ref{cor:25} one has
$$\U \cdot \prod_{i=1}^{r-1}\e(\eta'_i)^{\beta_i} =\U \cdot (\pi^r_1)^\ast\left(\e(\ddot\xi)^{\sum_{i=1}^{r-1}\beta_i}\right).$$
This product vanishes if and only if $\e(\ddot\xi)^{\sum_{i=1}^{r-1}\beta_i}=0$, i.e. iff $\sum_{i=1}^{r-1} \beta_i \, > \, \h(\e(\ddot \xi))$. 

For $q$ odd the argument is similar but instead of Corollary \ref{cor:25} we use Corollary \ref{cor:26}. Taking into account (\ref{zero}) we obtain
$$\U \cdot \prod_{i=1}^{r-1}\e(\eta'_i)^{\beta_i} =\, \pm \, \U \cdot (\pi^r_1)^\ast\left(\e(\ddot\xi)^{\sum_{i=1}^{r-1}\beta_i}\right)$$
and as above this product vanishes if and only if $\sum_{i=1}^{r-1} \beta_i \, > \, \h(\e(\ddot \xi))$.
\end{proof}
\begin{corollary}\label{cor:lbound}
The sequential parametrized topological complexity of the unit sphere bundle associated with an oriented vector bundle $\xi: E\to B$ is bounded below by $\h(\e(\ddot \xi))+r-1$, i.e.
\begin{eqnarray}
\tc_r[\dot \xi: \dot E\to B] \ge \h(\e(\ddot \xi))+r-1.
\end{eqnarray}
\end{corollary}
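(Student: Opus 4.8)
The plan is to read the corollary directly off Theorem \ref{thm:lower} via the cohomological lower bound of Proposition \ref{lem:lowerbound}. Recall that the cup-length of an ideal $I\subset H^\ast(\dot E^r_B;\Z)$ is, by definition, the largest integer $k$ for which there exist classes $u_1,\dots,u_k\in I$ with $u_1\cup\cdots\cup u_k\neq 0$. Theorem \ref{thm:lower} asserts that for the ideal $I=\ker[\Delta^\ast:H^\ast(\dot E^r_B;\Z)\to H^\ast(\dot E;\Z)]$ this number equals $\h(\e(\ddot\xi))+r-1$.

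First I would invoke Theorem \ref{thm:lower} to produce $k:=\h(\e(\ddot\xi))+r-1$ classes $u_1,\dots,u_k\in\ker\Delta^\ast$ whose product is nonzero. Concretely these may be chosen among the generators of the kernel described in \S\ref{sec:7}: take $u_1=\cdots=u_{\h(\e(\ddot\xi))+1}$ equal to $\v_1-\e(\eta'_1)$, and $u_{\h(\e(\ddot\xi))+1+j}=\v_{j+1}-\e(\eta'_{j+1})$ for $1\le j\le r-2$. Using the analogue of formula (\ref{uek}) for the classes $\v_i-\e(\eta'_i)$, the identity $\v_i\cdot\U=0$ of (\ref{zero}), and the computations of $\e(\eta'_i)$ in Corollaries \ref{cor:25}--\ref{cor:26}, this product simplifies to $\pm\,\U\cdot(\pi^r_1)^\ast\big(\e(\ddot\xi)^{\h(\e(\ddot\xi))}\big)$, which is nonzero by definition of the height. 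I would then apply Proposition \ref{lem:lowerbound} to the fibration $\dot\xi:\dot E\to B$ with coefficient ring $R=\Z$, which immediately gives
$$\tc_r[\dot\xi:\dot E\to B]\;\ge\;k\;=\;\h(\e(\ddot\xi))+r-1.$$

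I expect no real obstacle at this stage: every substantive ingredient — the identification of $\ker\Delta^\ast$ with the ideal generated by the classes $\v_i-\e(\eta'_i)$, the evaluation of the Euler classes $\e(\eta'_i)$ on the spheres $Z_j$ in both the even- and odd-rank cases, and the determination of precisely when the relevant products vanish — has already been carried out in the proof of Theorem \ref{thm:lower}. The corollary is merely the reinterpretation of that cup-length statement as a bound on $\tc_r$. The only point worth a remark is the degenerate case $\e(\ddot\xi)=0$, where $\h(\e(\ddot\xi))=0$ and the estimate reduces to the universal bound $\tc_r[\dot\xi:\dot E\to B]\ge r-1$ valid for every vector bundle $\xi$.
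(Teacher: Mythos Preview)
Your proposal is correct and follows exactly the paper's approach: the paper's proof consists of a single sentence invoking Proposition~\ref{lem:lowerbound} and Theorem~\ref{thm:lower}. The explicit choice of classes and the simplification of their product that you spell out are already contained in the proof of Theorem~\ref{thm:lower} (Corollary~\ref{cor:23} together with Corollaries~\ref{cor:25}--\ref{cor:26}), so while correct, this detail is redundant once Theorem~\ref{thm:lower} is quoted.
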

\begin{proof}
This follows from Proposition \ref{lem:lowerbound} and Theorem \ref{thm:lower}. 
\end{proof}

\section{A few special cases}\label{sec:8}

First we consider situations when the dimension of the base $B$ is small. 

\begin{corollary}\label{cor:equal}
Let $\xi: E\to B$ be a rank $q$ vector bundle where the base $B$ is a CW-complex of dimension 
\begin{eqnarray}\label{ineqdim}
\dim B \le (q-1)\cdot \h(\e(\ddot \xi)).
\end{eqnarray}
 Then for any $r=2, 3, \dots$ one has
\begin{eqnarray}\label{eqqq}
\tc_r[\dot \xi: \dot E \to B] = \h(\e(\ddot \xi))+r-1 = \secat[\ddot \xi: \ddot E\to \dot E]+r-1. 
\end{eqnarray}
\end{corollary}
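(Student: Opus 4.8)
The plan is to combine the lower bound from Corollary~\ref{cor:lbound} with an upper bound obtained from Proposition~\ref{prop:upper}, and to show that under the dimension hypothesis~(\ref{ineqdim}) these two bounds coincide. First I would recall that Corollary~\ref{cor:lbound} gives
$$\tc_r[\dot\xi:\dot E\to B]\ \ge\ \h(\e(\ddot\xi))+r-1.$$
For the upper bound, Proposition~\ref{prop:upper} reduces matters to estimating $\secat[\ddot\xi:\ddot E\to\dot E]$, the sectional category of the Stiefel bundle, whose fibre is $S^{q-2}$. The standard dimension-connectivity bound for sectional category (the analogue of (\ref{upper}) for $r=1$, i.e.\ the classical estimate $\secat[p:E\to B]\le 1+\dim B/(k+1)$ when the fibre is $k$-connected) applied to $\ddot\xi$ with base $\dot E$ and $(q-3)$-connected fibre $S^{q-2}$ would give a bound in terms of $\dim\dot E=\dim B+q-1$; however this is not immediately sharp enough. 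Instead I would argue that $\secat[\ddot\xi:\ddot E\to\dot E]\le\h(\e(\ddot\xi))$ precisely when $\dim B\le (q-1)\h(\e(\ddot\xi))$.

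The key step is therefore the inequality
$$\secat[\ddot\xi:\ddot E\to\dot E]\ \le\ \h(\e(\ddot\xi))$$
under hypothesis~(\ref{ineqdim}). To see this I would use the dimension bound for sectional category of the sphere bundle $\ddot\xi$: since its fibre $S^{q-2}$ is $(q-3)$-connected, one has
$$\secat[\ddot\xi:\ddot E\to\dot E]\ \le\ 1+\left\lceil\frac{\dim\dot E-(q-1)}{q-1}\right\rceil=1+\left\lceil\frac{\dim B}{q-1}\right\rceil,$$
using $\dim\dot E=\dim B+q-1$. When $\dim B\le (q-1)\h(\e(\ddot\xi))$ this yields $\secat[\ddot\xi:\ddot E\to\dot E]\le 1+\h(\e(\ddot\xi))$, which is one too large; the extra $+1$ must be removed. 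The cleaner route is to observe that $\h(\e(\ddot\xi))$ itself is a \emph{lower} bound for $\secat[\ddot\xi:\ddot E\to\dot E]$ — indeed, the Euler class $\e(\ddot\xi)\in H^{q-1}(\dot E)$ is the primary obstruction to a section of $\ddot\xi$, so nonvanishing of $\e(\ddot\xi)^k$ forces $\secat[\ddot\xi:\ddot E\to\dot E]\ge k$ by the cohomological lower bound for sectional category — while the dimensional bound, applied carefully (noting that the obstructions to a section of the $m$-fold fibrewise join of $\ddot\xi$ live in degrees that are multiples of $q-1$ and vanish above degree $\dim\dot E$), gives $\secat[\ddot\xi:\ddot E\to\dot E]\le\lceil\dim\dot E/(q-1)\rceil-1$. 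Under~(\ref{ineqdim}), $\lceil\dim\dot E/(q-1)\rceil-1=\lceil(\dim B)/(q-1)\rceil +1 -1\le\h(\e(\ddot\xi))$, so both bounds pinch to $\secat[\ddot\xi:\ddot E\to\dot E]=\h(\e(\ddot\xi))$.

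Once $\secat[\ddot\xi:\ddot E\to\dot E]=\h(\e(\ddot\xi))$ is established, Proposition~\ref{prop:upper} gives
$$\tc_r[\dot\xi:\dot E\to B]\ \le\ \h(\e(\ddot\xi))+r-1,$$
and together with Corollary~\ref{cor:lbound} this forces equality throughout, proving~(\ref{eqqq}). The main obstacle I anticipate is getting the sharp dimensional bound for $\secat[\ddot\xi:\ddot E\to\dot E]$ — specifically, shaving off the naive $+1$ by exploiting that all obstruction classes for sections of the fibrewise joins of a sphere bundle of fibre dimension $q-2$ are concentrated in degrees divisible by $q-1$, so that the top potentially-nonzero obstruction sits in degree $(q-1)\cdot(\secat)$, which must not exceed $\dim\dot E$. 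Making this counting precise (and consistent with the convention that $\h(\e(\ddot\xi))=0$ when $\e(\ddot\xi)=0$, where $\ddot\xi$ admits a section by obstruction theory under~(\ref{ineqdim})) is the delicate point; everything else is assembling the already-proved pieces.
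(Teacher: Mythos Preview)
Your overall strategy is exactly that of the paper: sandwich $\tc_r[\dot\xi:\dot E\to B]$ between the lower bound of Corollary~\ref{cor:lbound} and the upper bound of Proposition~\ref{prop:upper}, reducing everything to the identity $\secat[\ddot\xi:\ddot E\to\dot E]=\h(\e(\ddot\xi))$ under hypothesis~(\ref{ineqdim}). The paper dispatches this identity in one line by citing Lemma~2.1(c) of \cite{FW23}; you attempt to prove it directly.

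Your argument for the identity contains an error, precisely at the step you flag as delicate. The claim that ``the obstructions to a section of the $m$-fold fibrewise join of $\ddot\xi$ live in degrees that are multiples of $q-1$'' is false: the fibre of the $m$-fold join is $S^{m(q-1)-1}$, so the \emph{primary} obstruction lies in degree $m(q-1)$, but secondary and higher obstructions can occur in every degree $m(q-1)+1,\, m(q-1)+2,\,\dots$ (the higher homotopy groups of a sphere are generally nonzero). Consequently the general bound $\secat[\ddot\xi]\le\lceil\dim\dot E/(q-1)\rceil-1$ you state is not valid; it fails, for instance, for any oriented $S^{q-2}$-bundle whose Euler class has height exactly $\dim\dot E/(q-1)$.

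The fix is not to seek a general dimensional bound but to use the hypothesis on $\h(\e(\ddot\xi))$ directly. Set $h=\h(\e(\ddot\xi))$. By Schwarz's theorem the primary obstruction to a section of the $(h+1)$-fold fibrewise join of $\ddot\xi$ is $\e(\ddot\xi)^{h+1}$, which vanishes by definition of $h$; this produces a section over the $\bigl((h+1)(q-1)\bigr)$-skeleton of $\dot E$. Since $\dim\dot E=\dim B+q-1\le(q-1)(h+1)$ by~(\ref{ineqdim}), that skeleton is all of $\dot E$, and no higher obstructions arise. Hence $\secat[\ddot\xi]\le h$, and combined with the cohomological lower bound $\secat[\ddot\xi]\ge h$ (which you state correctly) this gives equality. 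This is the content of the cited lemma from \cite{FW23}.
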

\begin{proof}
Applying Lemma 2.1(c) from \cite{FW23} we see that our assumptions imply that $\h(\e(\ddot \xi))=\secat[\ddot \xi: \ddot E\to \dot E]$. 
Now Proposition \ref{prop:upper} combined with Corollary \ref{cor:lbound} 
give our claim (\ref{eqqq}). 
\end{proof}

\begin{corollary} Let $\xi: E\to B$ be a rank $q\ge 2$ vector bundle with the base $B$ a CW-complex. Assume that 
$q$ is odd and  satisfies 
\begin{eqnarray}\label{ineq:2}
\dim B \le q -1.\end{eqnarray}
Then 
$\tc_r[\dot \xi: \dot E \to B]$ equals either $r$ or $r+1$.  Moreover, if $q$ is odd and $\dim B< q-1$ then $\tc_r[\dot \xi: \dot E \to B]=r$.
\end{corollary}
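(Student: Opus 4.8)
The plan is to combine the general lower bound for $\tc_r$ of sphere bundles with the sharp upper bound coming from the sectional category of the Stiefel bundle, exactly in the spirit of Corollary~\ref{cor:equal}, but now tracking what happens when the inequality $\dim B \le q-1$ just fails to give equality. First I would record the lower bound: by the Remark following Corollary~\ref{cor:complex}, for odd $q$ one always has $\tc_r[\dot\xi:\dot E\to B]\ge r$, since $\tc_r(S^{q-1})=r$ when $q-1$ is even. This handles the lower estimate unconditionally under the hypothesis that $q$ is odd.

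Next I would establish the upper bound $\tc_r[\dot\xi:\dot E\to B]\le r+1$. By Proposition~\ref{prop:upper} it suffices to show $\secat[\ddot\xi:\ddot E\to\dot E]\le 2$. The fibre of $\ddot\xi$ is $S^{q-2}$, which is $(q-3)$-connected (using $q\ge 3$, which is forced since $q$ is odd and $q\ge 2$), and $\dim\dot E = \dim B + q-1 \le 2(q-1)$. The dimension/connectivity estimate for sectional category of a fibration (the classical bound $\secat\le \lceil (\dim + 1)/(\mathrm{conn}+1)\rceil$, or equivalently Proposition~\ref{prop upper bound} applied with $r$ replaced appropriately—more directly, the standard bound $\secat[\ddot\xi]\le \lceil (\dim\dot E - (q-2))/(q-1)\rceil$) then gives $\secat[\ddot\xi:\ddot E\to\dot E]\le \lceil (2(q-1)-(q-2))/(q-1)\rceil = \lceil (q)/(q-1)\rceil = 2$. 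Hence $\tc_r\le 2 + (r-1) = r+1$. Combining with the lower bound $\tc_r\ge r$ proves the first assertion.

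For the sharper claim when $\dim B < q-1$, I would invoke Lemma~2.1(c) of \cite{FW23} as in the proof of Corollary~\ref{cor:equal}: when $\dim B < q-1 = (q-1)\cdot 1$, writing this as $\dim B \le (q-1)\cdot\h(\e(\ddot\xi))$ in the borderline case $\h(\e(\ddot\xi))\ge 1$ would force $\h=0$ anyway for dimension reasons (the relevant power of the Euler class lives in $H^{(q-1)\h}(\dot E)$, which vanishes once $(q-1)\h > \dim\dot E$, but more simply $\e(\ddot\xi)\in H^{q-1}(\dot E)$ and, restricted appropriately, the argument shows $\secat[\ddot\xi]\le 1$ when $\dim\dot E = \dim B + q-1 < 2(q-1)$, i.e. $\dim\dot E \le 2q-3$, so the dimension bound gives $\secat\le\lceil (2q-3-(q-2))/(q-1)\rceil = \lceil (q-1)/(q-1)\rceil = 1$). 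Then Proposition~\ref{prop:upper} yields $\tc_r\le 1 + (r-1) = r$, and together with $\tc_r\ge r$ we get equality.

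The main obstacle I anticipate is getting the connectivity/dimension bookkeeping exactly right at the boundary: the case $\dim B = q-1$ genuinely allows both values $r$ and $r+1$ (one should perhaps note, or leave to examples in this section, that both occur), so the argument must be careful not to prove too much. The delicate point is that the sectional-category dimension bound for $\ddot\xi$ gives $2$ when $\dim B = q-1$ but drops to $1$ as soon as $\dim B \le q-2$, and one must check that the ceiling function genuinely behaves this way and that Proposition~\ref{prop:upper} is applied with the correct value of $\secat$. Everything else is a direct citation of results already in the excerpt.
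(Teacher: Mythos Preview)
Your argument is correct, and it takes a somewhat different route from the paper's. The paper proves this corollary by first observing that for odd $q$ the Euler class $\e(\ddot\xi)\in H^{q-1}(\dot E)$ is nonzero (its restriction to each fibre detects $\chi(S^{q-1})=2$), so $\h(\e(\ddot\xi))\ge 1$; then the hypothesis $\dim B\le q-1$ verifies inequality~(\ref{ineqdim}) and Corollary~\ref{cor:equal} gives the \emph{equality} $\tc_r[\dot\xi]=\h(\e(\ddot\xi))+r-1$. Finally $\dim\dot E\le 2(q-1)$ forces $\h(\e(\ddot\xi))\in\{1,2\}$, and $\dim B<q-1$ forces $\h(\e(\ddot\xi))=1$.

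You instead bypass the Euler class entirely: the lower bound comes from the fibre inequality $\tc_r[\dot\xi]\ge \tc_r(S^{q-1})=r$, and the upper bound comes from the classical Schwarz dimension/connectivity estimate for $\secat[\ddot\xi]$ (fibre $S^{q-2}$ is $(q-3)$-connected, base $\dot E$ has dimension $\le 2(q-1)$, respectively $\le 2q-3$) fed into Proposition~\ref{prop:upper}. This is more elementary and avoids any characteristic-class input. What the paper's approach buys in exchange is the exact identification $\tc_r[\dot\xi]=\h(\e(\ddot\xi))+r-1$, which tells you \emph{which} of $r$ or $r+1$ occurs in terms of computable cohomological data; your argument only sandwiches the answer. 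Your third paragraph is a bit muddled (you start invoking Lemma~2.1(c) of \cite{FW23} and then abandon it), but the clean dimension computation you land on, $\secat[\ddot\xi]\le\lceil (q-1)/(q-1)\rceil=1$ when $\dim B\le q-2$, is all that is needed there.
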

\begin{proof}
If $q$ is odd then the Euler class $\e(\ddot \xi)\in H^{q-1}(\dot E)$ is nonzero since its restriction onto each fibre of the bundle 
$\dot \xi: \dot E\to B$ is nonzero [as it equals the Euler characteristic of the sphere $S^{q-1}$]. 
Thus, in this case $\h(\e(\ddot\xi))\ge 1$ and (\ref{ineq:2}) implies (\ref{ineqdim}) and the equalities
\begin{eqnarray}\label{30}
\tc_r[\dot \xi: \dot E \to B] = \h(\e(\ddot \xi))+r-1 = \secat[\ddot \xi: \ddot E\to \dot E]+r-1
\end{eqnarray}
follow from Corollary \ref{cor:equal}. 

On the other hand, since $\dim \dot E =\dim B +q-1\le 2(q-1)$ we see that 
$1\le \h(\e(\ddot \xi))\le 2$ and therefore (\ref{30}) implies that $\tc_r[\dot \xi: \dot E \to B]$ equals either $r$ or $r+1$. The proof of the last statement is based on the fact that under the indicated assumptions $\dim \dot E <2(q-1)$ and hence $\h(\e(\ddot \xi))=1$. 
\end{proof}

Next we give an application of the sharp upper bound of Theorem \ref{sharp} combined with Theorem \ref{thm:lower}:

\begin{theorem}\label{sharp3} Let $\xi: E\to B$ be an oriented rank $q\ge 3$ vector bundle. 
Suppose that $B$ is a simply connected finite CW-complex and its dimension $\dim B$ is divisible by $q-1$ and satisfies 
\begin{eqnarray} \label{hsmall}
\h(\e(\ddot \xi))\le \frac{\dim B}{q-1}.\end{eqnarray} 
Then 
\begin{eqnarray}\label{sharp4}
\tc_r[\dot\xi: \dot E\to B] \le r-1 + \frac{\dim B}{q-1}.
\end{eqnarray}
\end{theorem}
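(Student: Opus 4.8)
The plan is to derive \eqref{sharp4} directly from the sharp upper bound of \S\ref{sec:sharp} --- in the form of Corollary \ref{sharp1} --- combined with the cup-length computation of Theorem \ref{thm:lower}. Note first that the fibre of $\dot\xi:\dot E\to B$ is the sphere $S^{q-1}$, which is $(q-2)$-connected, so the dimensional bound of Proposition \ref{prop upper bound} (see also \eqref{upper2}) already gives
$$\tc_r[\dot\xi:\dot E\to B]\ \le\ r-1+\left\lceil\frac{\dim B+1}{q-1}\right\rceil\ =\ r+\frac{\dim B}{q-1}\ =:\ m,$$
where the middle equality uses that $q-1$ divides $\dim B$ and $q-1\ge 2$. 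The theorem asks to improve this estimate by exactly one, which is precisely what Corollary \ref{sharp1} achieves once we know that the cup-length of $\ker[\Delta^\ast:H^\ast(\dot E^r_B;\Z)\to H^\ast(\dot E;\Z)]$ is strictly smaller than $m$.

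So the first step is to check that the hypotheses of Corollary \ref{sharp1} hold with $X=S^{q-1}$ and $k=q-2$: since $q\ge 3$ we have $k\ge 1$; the group $H_{k+1}(S^{q-1};\Z)=H_{q-1}(S^{q-1};\Z)\cong\Z$ is torsion free; the base $B$ is simply connected and finite CW (whence $\dot E$ is a finite CW-complex); and $r\cdot\dim X+\dim B=r(q-1)+\dim B=(q-1)m=(k+1)m$ with $m=r+\dim B/(q-1)$ an integer satisfying $m\ge r\ge 2$, again by the divisibility hypothesis. The second step is to invoke Theorem \ref{thm:lower}, which identifies the cup-length of $\ker\Delta^\ast$ with $\h(\e(\ddot\xi))+r-1$; using hypothesis \eqref{hsmall} this gives
$$\h(\e(\ddot\xi))+r-1\ \le\ \frac{\dim B}{q-1}+r-1\ =\ m-1\ <\ m.$$
The third step is to apply Corollary \ref{sharp1}, which then yields $\tc_r[\dot\xi:\dot E\to B]<m$, i.e. $\tc_r[\dot\xi:\dot E\to B]\le m-1=r-1+\dim B/(q-1)$, which is \eqref{sharp4}.

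With the theory of \S\S\ref{sec:sharp} and \ref{sec:7} already in hand, there is no substantial obstacle here: all the hard work has been front-loaded into the obstruction-theoretic argument behind Theorem \ref{sharp}/Corollary \ref{sharp1} and into the inductive computation of the kernel of the diagonal map in Theorem \ref{thm:lower}. The only points requiring a little care are the bookkeeping that makes $m$ an integer (the divisibility of $\dim B$ by $q-1$, which is also what lets Corollary \ref{sharp1} apply at all) and the passage from the non-strict inequality \eqref{hsmall} to the strict inequality ``cup-length $<m$'' that Corollary \ref{sharp1} requires; both are immediate because every quantity involved is an integer.
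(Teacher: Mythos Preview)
Your proof is correct and follows essentially the same approach as the paper's: verify that Corollary~\ref{sharp1} applies with $k=q-2$ and $m=r+\dim B/(q-1)$, use Theorem~\ref{thm:lower} to identify the cup-length of $\ker\Delta^\ast$ as $\h(\e(\ddot\xi))+r-1$, and conclude from hypothesis~\eqref{hsmall} that this is strictly less than $m$. Your write-up is simply more explicit in checking the hypotheses (torsion-freeness of $H_{q-1}(S^{q-1};\Z)$, integrality of $m$, $m\ge 2$) than the paper's terse version.
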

Note that the upper bound (\ref{sharp4}) is by one better than (\ref{upper2}). 
\begin{proof} Theorem \ref{sharp} part (B) is applicable. In our case $k=q-2$ and $m=r+\frac{\dim B}{q-1}$. The assumption (\ref{hsmall}) can be written in the form $\h(\e(\ddot \xi)) +r-1 <m$,
which, by Theorem \ref{thm:lower}, means that the cup-length of the kernel (\ref{ker}) is smaller than $m$. 
Now Corollary \ref{sharp1}  applies and gives 
$\tc_r[\dot\xi: \dot E\to B]<m$, which is equivalent to (\ref{sharp4}). 
\end{proof}
%
%
Next we consider the special case when the original bundle $\xi$ admits a nonzero section:

\begin{proposition}\label{thm:31}
Assume that $\xi=\eta\oplus \epsilon$ where $\eta: E(\eta)\to B$ is an oriented rank $q-1$ vector bundle and $\epsilon$ is the 
 trivial oriented line bundle. 
Then: 
(a) if $q\ge 2$ is even then 
\begin{eqnarray}\label{qeven}
\h(\e(\ddot\xi))=\h(\e(\eta));
\end{eqnarray} 
(b) if $q\ge 3$ is odd and $H^\ast(B;\Z)$ has no 2-torsion then 
\begin{eqnarray}\label{qodd}
\h(\e(\ddot\xi))=2\cdot \left\lfloor\frac{\h(\e(\eta))}{2}\right\rfloor+1.
\end{eqnarray}
In particular, we see that for $q\ge 3$ odd the height $\h(\e(\ddot\xi))$ is always odd. 
\end{proposition}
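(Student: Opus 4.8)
The plan is to pin down the class $\e(\ddot\xi)\in H^{q-1}(\dot E)$ inside the cohomology ring described by Theorem~\ref{thm:sec}, and then compute the height of its powers. Since by hypothesis $\xi=\eta\oplus\epsilon$ with $\epsilon$ the trivial oriented line bundle, Theorem~\ref{thm:sec} and Corollary~\ref{cor:4} apply verbatim: $H^\ast(\dot E)$ is the quotient of $H^\ast(B)[\u]$ by the ideal generated by $\u^2-\e(\eta)\u$, the section $s\colon B\to\dot E$ determined by $\epsilon$ satisfies $s^\ast(\u)=\e(\eta)$, the kernel of $s^\ast$ is the principal ideal $(\u-\e(\eta))$, and $\dot\xi^\ast$ is injective. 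Recall also from \S\ref{sec:7} that $\ddot\xi$ is the unit sphere bundle of the rank $q-1$ bundle $N$ over $\dot E$ whose fibre over $e$ is the orthogonal complement $e^\perp\subset\xi_{\dot\xi(e)}$, so that $\e(\ddot\xi)=\e(N)$.

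The first step is two geometric identifications of $N$. Over the image of the section $s$, the fibre of $N$ at $s(b)$ is the orthogonal complement of $s(b)$ inside $\eta_b\oplus\epsilon_b$, which is precisely $\eta_b$; tracking the induced orientations one gets $N|_{s(B)}\cong\eta$, hence $s^\ast(\e(\ddot\xi))=\e(\eta)$. Over a fibre $\dot E_b=S^{q-1}$ of $\dot\xi$ the bundle $N$ is the tangent bundle $T S^{q-1}$, so $\e(\ddot\xi)$ restricts on $\dot E_b$ to $\chi(S^{q-1})$ times the fibre generator, i.e.\ to $\chi(S^{q-1})\cdot(\u|_{\dot E_b})$. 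I expect the main obstacle of the proof to be exactly this step, in particular the orientation bookkeeping needed to obtain $s^\ast(\e(\ddot\xi))=+\e(\eta)$ with the correct sign; everything afterwards is routine manipulation in the ring $H^\ast(\dot E)$.

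Combining the two identifications determines $\e(\ddot\xi)$. From $s^\ast(\e(\ddot\xi))=\e(\eta)=s^\ast(\u)$ we get $\e(\ddot\xi)-\u\in\ker[s^\ast]=(\u-\e(\eta))$, and a degree count ($\e(\ddot\xi)-\u$ and $\u-\e(\eta)$ both have degree $q-1$) forces $\e(\ddot\xi)=\u+b\,(\u-\e(\eta))$ for some integer $b$. Restricting to a fibre and using the second identification gives $b+1=\chi(S^{q-1})$. When $q$ is even, $\chi(S^{q-1})=0$, so $b=-1$ and $\e(\ddot\xi)=\dot\xi^\ast(\e(\eta))$; since $\dot\xi^\ast$ is a ring monomorphism, $\e(\ddot\xi)^k\ne0$ iff $\e(\eta)^k\ne0$, which establishes (\ref{qeven}).

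When $q\ge3$ is odd, $\chi(S^{q-1})=2$, so $b=1$ and $\e(\ddot\xi)=2\u-\e(\eta)$; note $q-1$ is even, so all classes in sight commute. Using $\u^2=\e(\eta)\u$ one gets $(2\u-\e(\eta))^2=\e(\eta)^2$, hence by induction $(2\u-\e(\eta))^{2j}=\e(\eta)^{2j}$ and $(2\u-\e(\eta))^{2j+1}=2\e(\eta)^{2j}\u-\e(\eta)^{2j+1}$. Writing $H^\ast(\dot E)=H^\ast(B)\oplus H^\ast(B)\u$ as an $H^\ast(B)$-module and using that $H^\ast(B;\Z)$ has no $2$-torsion (so $2\e(\eta)^{2j}=0$ iff $\e(\eta)^{2j}=0$), both displayed powers are nonzero exactly when $\e(\eta)^{2j}\ne0$, i.e.\ when $2j\le\h(\e(\eta))$. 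Hence the largest nonvanishing power is the odd one with $2j=2\lfloor\h(\e(\eta))/2\rfloor$, giving $\h(\e(\ddot\xi))=2\lfloor\h(\e(\eta))/2\rfloor+1$, which is visibly odd; this proves (\ref{qodd}) and the final assertion.
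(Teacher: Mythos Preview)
Your proof is correct and follows essentially the same route as the paper's. The only cosmetic difference is in how the class $\e(\ddot\xi)$ is expressed: the paper writes $\e(\ddot\xi)=a+b\,\u$ in the Leray--Hirsch basis and determines $a\in H^{q-1}(B)$ and $b\in\Z$ from the fibre restriction and from $s^\ast$, whereas you first use $\ker(s^\ast)=(\u-\e(\eta))$ to write $\e(\ddot\xi)=\u+b(\u-\e(\eta))$ and then determine $b$ from the fibre restriction; both arrive at $\e(\ddot\xi)=\e(\eta)$ for $q$ even and $\e(\ddot\xi)=2\u-\e(\eta)$ for $q$ odd, and the computation of powers is identical thereafter.
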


\begin{proof} Below all cohomology groups are understood with integer coefficients. We apply Theorem \ref{thm:sec} 
to the bundle $\xi: E(\xi)\to B$ to conclude that $H^\ast(\dot E(\xi))$ is the quotient of the polynomial extension $H^\ast(B)[\u]$ with respect to the principal ideal generated by the class $\u-\e(\eta)$. We shall identify the cohomology of the base $H^\ast(B)$ with its image in $H^\ast(\dot E(\xi))$ under the monomorphism $\dot \xi^\ast$.
The class $\u$ is a cohomological extension of the fibre, i.e. its restriction to each fibre is the fundamental class of the fibre. 
By Leray - Hirsch theorem, the Euler class $\e(\ddot \xi)\in H^{q-1}(\dot E(\xi))$ has a unique representation in the form
\begin{eqnarray}\label{abu}
\e(\ddot \xi) = a+ b\cdot \u,
\end{eqnarray}
where $a\in H^{q-1}(B)$ and $b\in \Z$. The restriction of the class $\e(\ddot \xi)$ to each fibre of the bundle 
$\dot \xi: \dot E\to B$ equals $\chi(S^{q-1})$ times the fundamental class of the fibre. Hence, we obtain that in (\ref{abu}) 
 $b=0$, if $q$ is even, and $b=2$ if $q$ is odd. 

Let $s:B \to \dot E(\xi)$ be the section determined by the trivial summand $\epsilon$. Geometrically it is obvious that 
$$s^\ast(\ddot \xi) = \dot \eta.$$ Therefore, using functoriality of the Euler class, we have $s^\ast(\e(\ddot \xi))=\e(\eta).$ On the other hand, 
we know that $s^\ast(\u) = \e( \eta)$, see Corollary \ref{cor:4}. Applying $s^\ast$ to both sides of equation (\ref{abu}) and noting that $s^\ast(a)=a$ we find that 
\begin{eqnarray}\label{abu2}
\e(\ddot \xi) = \left\{
\begin{array}{ll}
\e(\eta), & \mbox{if $q$ is even}, \\ 
-\e(\eta) +2\cdot \u, & \mbox{if $q$ is odd}.
\end{array}
\right.
\end{eqnarray}
From this, clearly (\ref{qeven}) follows for $q$ even. 

Consider now the case when $q\ge 3$ is odd. Using (\ref{abu2}) and the equality $\u^2=\e(\eta)\cdot \u$ (see Corollary \ref{cor:4}), we find that 
$\e(\ddot \xi)^2 = \e(\eta)^2$ and therefore we get
$$
\e(\ddot\xi)^{2n} = \e(\eta)^{2n} \quad \mbox{and}\quad \e(\ddot\xi)^{2n+1} = -\e(\eta)^{2n+1} + 2\e(\eta)^{2n}\cdot \u
$$
for any integer $n\ge 0$. Using our assumption about the absence of 2-torsion in integral cohomology of $B$,
we obtain from these equations that  
$$
\h(\e(\ddot \xi)) = \left\{
\begin{array}{llll} 
\h(\e(\eta)), &\mbox{if}& \h(\e(\eta)) &\mbox{is odd},\\
\h(\e(\eta))+1, &\mbox{if}& \h(\e(\eta)) &\mbox{is even.}
\end{array}
\right.
$$
This is equivalent to our claim (\ref{qodd}). 
\end{proof}

\begin{example}\label{ex2} Consider the canonical complex line bundle $\eta: E(\eta)\to \CP^n$ over the complex projective space. Viewing $\eta$ as a rank 2 real vector bundle we may apply Proposition \ref{thm:31} to the bundle $$\xi=\eta\oplus \epsilon.$$ Here $q=3$ is odd and $\h(\e(\eta))=n$. By Proposition \ref{thm:31} we obtain $$\h(\e(\ddot \xi))= 2\left \lfloor n/2\right\rfloor+1.$$
Applying Corollary \ref{cor:lbound} we obtain
$
\tc_r[\dot \xi: \dot E(\xi)\to \CP^n] \ge 2\left \lfloor n/2\right\rfloor+r.
$
On the other hand the inequality (\ref{upper2}) gives
$
\tc_r[\dot \xi: \dot E(\xi)\to \CP^n] \le n+r.
$
We see that the lower and upper bounds agree if $n$ is even. 

If $n$ is odd inequality (\ref{hsmall}) is satisfied and we can apply Theorem \ref{sharp3}. Inequality (\ref{sharp4}) then gives
$
\tc_r[\dot \xi: \dot E(\xi)\to \CP^n] \le n+r-1.
$
Thus, we obtain:
\begin{corollary}\label{cor:36}
For the vector bundle $\xi$ of Example \ref{ex2} one has 
\begin{eqnarray}
\tc_r[\dot \xi: \dot E(\xi)\to \CP^n] = \left\{
\begin{array}{ll}
n+r, &\mbox{if $n$ is even},\\
n+r-1, &\mbox{if $n$ is odd}.
\end{array}
\right.
\end{eqnarray}
\end{corollary}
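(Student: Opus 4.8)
The plan is to assemble the lower and upper bounds established in Example \ref{ex2} and observe that they coincide in each parity case; no genuinely new computation is required. For the lower bound: since $q=3$ is odd, $H^\ast(\CP^n;\Z)$ is free of $2$-torsion, and $\h(\e(\eta))=n$ for the canonical line bundle, Proposition \ref{thm:31}(b) gives $\h(\e(\ddot\xi)) = 2\lfloor n/2\rfloor + 1$, and Corollary \ref{cor:lbound} then yields
$$\tc_r[\dot\xi:\dot E(\xi)\to\CP^n]\ \ge\ 2\lfloor n/2\rfloor + r,$$
which equals $n+r$ when $n$ is even and $n+r-1$ when $n$ is odd.

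To match this from above I would split on the parity of $n$. When $n$ is even the dimensional bound (\ref{upper2}) with $q=3$ and $\dim\CP^n=2n$ already gives $\tc_r\le r-1+\lceil(2n+1)/2\rceil = n+r$, completing that case. When $n$ is odd the bound (\ref{upper2}) overshoots by one, so instead I would apply Theorem \ref{sharp3}: $\CP^n$ is a simply connected finite CW-complex, $q=3\ge 3$, $q-1=2$ divides $\dim\CP^n=2n$, and the hypothesis (\ref{hsmall}) holds because in the odd case $\h(\e(\ddot\xi))=2\lfloor n/2\rfloor+1=n$ while $\dim B/(q-1)=n$, so it reads $n\le n$. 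Theorem \ref{sharp3} then gives $\tc_r\le r-1+n = n+r-1$, meeting the lower bound again.

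The only point requiring any care is the verification of the hypotheses of Theorem \ref{sharp3} in the odd case, in particular the borderline inequality (\ref{hsmall}), which is satisfied with equality $n=n$; all the remaining hypotheses are immediate for $\CP^n$ with $q=3$. Thus the corollary reduces entirely to substitution into results already proved, and the proof is short.
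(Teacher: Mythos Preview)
Your proposal is correct and follows essentially the same route as the paper: the argument in Example \ref{ex2} preceding the corollary computes $\h(\e(\ddot\xi))=2\lfloor n/2\rfloor+1$ via Proposition \ref{thm:31}, obtains the lower bound from Corollary \ref{cor:lbound}, uses (\ref{upper2}) for the upper bound when $n$ is even, and invokes Theorem \ref{sharp3} when $n$ is odd. Your explicit check that (\ref{hsmall}) holds with equality in the odd case is exactly what the paper means when it asserts that inequality is satisfied.
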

\end{example}
\begin{example}
For comparison with the previous example consider the vector bundle 
$$\xi_\ell=\eta\oplus \ell\cdot \epsilon=\eta\oplus \epsilon\oplus \epsilon \oplus \dots\oplus\epsilon$$ 
over $\CP^n$, where $\ell =0, 1, 2, \dots$. Here $\eta$ is the canonical complex line bundle over $\CP^n$ 
The previous example corresponds to the case $\ell=1$. The rank of $\xi_\ell$ equals $q= 2+\ell$. 
If $\ell$ is even then $\xi_\ell$ admits a complex structure and hence $\tc_r[\dot\xi:\dot E\to B]=r-1$, according to Corollary \ref{cor:complex}.
If $\ell$ is odd and $\ell>1$ then $\xi_\ell$ admits 2 linear independent sections and hence by Corollary \ref{cor:16} we obtain 
 $\tc_r[\dot\xi:\dot E\to B]=r$. 
 Thus we see that only the case $\ell=1$ leads to high topological complexity as described in Corollary \ref{cor:36}.
\end{example}

\section{Lower bounds using Stiefel - Whitney characteristic classes}\label{sec:9}

In this section we state an analogue of Theorem \ref{thm:lower} in which instead of the Euler class feature the Stiefel - Whitney classes. Compared to Theorem \ref{thm:lower} this result has two advantages: firstly, it involves characteristic classes of the original vector bundle $\xi: E\to B$ and, secondly, it does not require the bundle $\xi$ to be orientable. The case $r=2$ appears in the paper \cite{FarW}. 

\begin{theorem}\label{thm:lower2} Let $\xi: E\to B$ be a locally trivial vector bundle of rank $q\ge 2$. Consider the associated sphere bundle $\dot \xi: \dot E\to B$, the space $\dot E^r_B$ and the diagonal map $\Delta: \dot E\to \dot E^r_B$. 
Then the cup-length of the kernel 
\begin{eqnarray}\label{kernel22}
\ker[\Delta^\ast: H^\ast(\dot E^r_B;\Z_2) \to H^\ast(\dot E;\Z_2)]\end{eqnarray} 
equals $$\h(w_{q-1}(\xi)|w_q(\xi))+r-1.$$
Here $w_{q-1}(\xi)\in H^{q-1}(B;\Z_2)$ and $w_{q}(\xi)\in H^{q}(B;\Z_2)$ denote the Stiefel - Whitney classes of $\xi$ and the symbol 
$\h(w_{q-1}(\xi)|w_q(\xi))$ denotes the smallest integer $k\ge 0$ such that the power $w_{q-1}(\xi)^{k+1}$ lies in the ideal of the algebra $H^\ast(B;\Z_2)$ generated by the class $w_q(\xi)$. In particular, using Proposition \ref{lem:lowerbound}, one obtains the inequality
\begin{eqnarray}\label{lbound2}
\tc_r[\dot \xi: \dot E\to B] \ge \h(w_{q-1}(\xi)|w_q(\xi))+r-1.
\end{eqnarray}
\end{theorem}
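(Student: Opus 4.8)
**The plan is to mirror the proof of Theorem \ref{thm:lower}, replacing integral cohomology and Euler classes with $\Z_2$-coefficients and top Stiefel--Whitney classes.** First I would set up the same tower of fibrations (\ref{tower}), where each $\pi^j_{j-1}: \dot E^j_B \to \dot E^{j-1}_B$ is the sphere bundle $S^{q-1}$, and observe that Theorem \ref{thm:sphere2} applies to each level: the homomorphism $(\pi^{i+1}_i)^\ast$ is injective, $H^\ast(\dot E^{i+1}_B;\Z_2)$ is the quotient of $H^\ast(\dot E^i_B;\Z_2)[\u_i]$ by the relation $\u_i^2 = w_{q-1}(\eta_i)\cdot \u_i$, and $(\sigma^i_{i+1})^\ast(\u_i) = w_{q-1}(\eta_i)$. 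Here $\eta_i$ is the $S^{q-2}$-bundle of vectors orthogonal to the section $\sigma^i_{i+1}$; the key point (replacing the functoriality of Euler classes) is that $w_{q-1}(\eta_i) = w_{q-1}(h_i^\ast\xi)$ pulled back appropriately, which follows from the Cartan formula exactly as in the proof of Theorem \ref{thm:sphere2} since $\dot\xi^\ast\xi = \eta \oplus \epsilon$ and $w_{q-1}(\eta) = w_{q-1}(\xi)$.

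Next I would carry over verbatim the structural lemma: setting $\v_i = (\pi^r_{i+1})^\ast(\u_i) \in H^{q-1}(\dot E^r_B;\Z_2)$ and letting $\mathfrak w_i \in H^{q-1}(\dot E^r_B;\Z_2)$ denote the pullback of $w_{q-1}(\xi)$ via $h_i: \dot E^r_B \to \dot E \to B$ (the analogue of $\e(\eta'_i)$), one shows $\Delta^\ast(\v_i) = \Delta^\ast(\mathfrak w_i) = $ (the class $w_{q-1}(\xi)$ pulled to $\dot E$), and $\v_i^2 = \mathfrak w_i \cdot \v_i$. The same induction as in the unnumbered Lemma then gives that $\ker\Delta^\ast$ is the ideal generated by $\v_1 - \mathfrak w_1, \dots, \v_{r-1} - \mathfrak w_{r-1}$ (signs irrelevant mod 2), and the analogue of Corollary \ref{cor:23} reduces the cup-length computation to: the minimal $M$ such that $\mathfrak U \cdot \prod \mathfrak w_i^{\beta_i} = 0$ whenever $\sum \beta_i > M$, where $\mathfrak U = \prod_{i=1}^{r-1}(\v_i - \mathfrak w_i)$ and $\v_i \cdot \mathfrak U = 0$. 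So the cup-length is $r - 1 + M$.

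**The step that differs genuinely from Theorem \ref{thm:lower}, and which I expect to be the main obstacle, is identifying $M$ with $\h(w_{q-1}(\xi)\,|\,w_q(\xi))$.** In the Euler-class case over an oriented bundle, $\e(\eta'_i)$ was, up to adding multiples of $\v_i$, just the pullback of $\e(\ddot\xi)$, and $\mathfrak U$ times a pullback vanishes iff the corresponding power of $\e(\ddot\xi)$ vanishes. Here the relevant class is $\mathfrak w_i$, the pullback of $w_{q-1}(\xi)$ itself, but one must also track $w_q(\xi)$: since $\dot\xi: \dot E \to B$ is a sphere bundle with fibre $S^{q-1}$, the Gysin sequence gives that a pullback class $\dot\xi^\ast(z) \in H^\ast(\dot E;\Z_2)$ vanishes precisely when $z$ lies in the ideal $(w_q(\xi))$ (the Euler class mod 2 of $\xi$ being $w_q(\xi)$). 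Thus $\mathfrak U \cdot (\text{pullback of } w_{q-1}(\xi)^N) = 0$ in $H^\ast(\dot E^r_B;\Z_2)$ iff $w_{q-1}(\xi)^N \in (w_q(\xi))$ in $H^\ast(B;\Z_2)$, giving $M = \h(w_{q-1}(\xi)\,|\,w_q(\xi))$.

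**Finally,** I would need to check carefully that the projection $\dot E^r_B \to \dot E \to B$ behaves like an iterated sphere bundle so that the relevant Leray--Hirsch / Gysin arguments do identify which pullback classes from $H^\ast(B;\Z_2)$ survive in $H^\ast(\dot E^r_B;\Z_2)$ — concretely, that $\mathfrak U$ restricted to a fibre is the top class of the product of Stiefel manifolds and hence the map $z \mapsto \mathfrak U \cdot (\pi^r_1 \circ \dots)^\ast \dot\xi^\ast(z)$ has kernel exactly $(w_q(\xi))$. Granting this, inequality (\ref{lbound2}) is immediate from Proposition \ref{lem:lowerbound}. The $r=2$ case recorded in \cite{FarW} serves as a consistency check on the bookkeeping.
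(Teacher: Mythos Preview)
Your proposal is correct and follows essentially the same route as the paper's proof: repeat the argument of Theorem \ref{thm:lower} with $\Z_2$-coefficients to obtain cup-length $\h(w_{q-1}(\ddot\xi))+r-1$, then use the identification $w_{q-1}(\ddot\xi)=\dot\xi^\ast(w_{q-1}(\xi))$ together with the Gysin sequence (equivalently, the spectral sequence of $\dot\xi:\dot E\to B$) to rewrite this height as $\h(w_{q-1}(\xi)\,|\,w_q(\xi))$. The only cosmetic slip is that the fibre of $\dot E^r_B\to \dot E$ is $(S^{q-1})^{r-1}$, not a product of Stiefel manifolds; also note that over $\Z_2$ your classes $\mathfrak w_i$ all coincide (since $\dot\xi\circ h_i$ is independent of $i$), which is why the parity discussion of Corollaries \ref{cor:24}--\ref{cor:26} collapses.
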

\begin{proof}  Repeating the arguments of the proof of Theorem \ref{thm:lower} with $\Z_2$ coefficients and noting that 
the reduction mod 2 of the Euler class $\e(\ddot \xi)$ equals the Stiefel - Whitney (SW) class $w_{q-1}(\ddot \xi)$, one obtains that the 
cup-length of the kernel (\ref{kernel22}) equals $\h(w_{q-1}(\ddot \xi))+r-1$. Finally we show that 
\begin{eqnarray}\label{equal222}
\h(w_{q-1}(\ddot \xi))= \h(w_{q-1}(\xi)|w_q(\xi)).
\end{eqnarray}
To prove (\ref{equal222}) we note that the bundle $\dot \xi^\ast(\xi)$ over $\dot E$ induced by the map $\dot \xi: \dot E\to B$ from $\xi$ has the form 
$\alpha\oplus \epsilon$ where $\epsilon$ is the trivial line bundle and the fibre of $\alpha$ over a point $e\in \dot E$ is $e^\perp$, i.e. the space of vectors of $E$ orthogonal to $e$. Clearly, one has $\ddot \xi= \dot \alpha$ for the unit sphere bundles and using the standard properties of the SW-classes we obtain 
\begin{eqnarray}\label{swclasses}
w_{q-1}(\ddot \xi) = \dot \xi^\ast(w_{q-1}(\xi)).
\end{eqnarray}
From the spectral sequence of fibration $\dot \xi: \dot E\to B$ we see that the kernel of the homomorphism 
$\dot \xi^\ast: H^\ast(B;\Z_2) \to H^\ast(\dot E; \Z_2)$ is the principal ideal generated by the class $w_q(\xi)\in H^q(\dot E;\Z_2)$. 
Thus, taking into account (\ref{swclasses}) we obtain that a power $w_{q-1}(\ddot \xi)^k$ vanishes if and only if the power 
$w_{q-1}(\xi)^k$ lies in the ideal of $H^\ast(B;\Z_2)$ generated by $w_q(\xi)$. This proves (\ref{equal222}). 
\end{proof}

\begin{example} For an integer $\ell=1, 2, \dots$ consider the bundle $\xi_\ell=\ell \eta\oplus \epsilon$ over ${\RP}^n$, where 
$\eta$ is the canonical line bundle over $\RP^n$ and $\epsilon$ is the trivial line bundle. The rank of this bundle equals $q=\ell+1$ and the total Stiefel - Whitney class is $(1+\alpha)^\ell$ where $\alpha\in H^1(\RP^n;\Z_2)$ is the generator. Thus we have 
$w_{q-1}(\xi_\ell) = \alpha^\ell$ and $w_q(\xi_\ell)=0$. The relative height $\h(w_{q-1}(\xi_\ell)|w_q(\xi_\ell))$ is the smallest $k\ge 0$ such that $\ell(k+1) \ge n+1$, i.e. 
$$\h(w_{q-1}(\xi_\ell)|w_q(\xi_\ell))=\left\lceil \frac{n+1-\ell}{\ell}\right\rceil= 
\left\lceil \frac{n+1}{\ell}\right\rceil-1.$$
Using (\ref{lbound2}) we obtain
$$
\tc_r[\dot \xi_\ell: \dot E(\xi_\ell) \to \RP^n] \ge \left\lceil \frac{n+1}{\ell}\right\rceil +r-2. 
$$
On the other hand, using (\ref{upper2}) we get
$$
\tc_r[\dot \xi_\ell: \dot E(\xi_\ell) \to \RP^n] \le \left\lceil \frac{n+1}{\ell}\right\rceil +r-1.
$$
These two inequalities determine the value $\tc_r[\dot \xi_\ell: \dot E(\xi_\ell) \to \RP^n]$ with indeterminacy 1.
\end{example}

\end{document}